\newcommand{\CM}{Cohen-Macaulay}
\newcommand{\wrt}{with respect to}
\newcommand{\bF}{ \mathbf{f}}
\newcommand{\n}{\mathfrak{n} }
\newcommand{\m}{\mathfrak{m} }
\newcommand{\M}{\mathcal{M} }
\newcommand{\q}{\mathfrak{q} }
\newcommand{\R}{\mathcal{R} }
\newcommand{\Sc}{\mathcal{S} }
\newcommand{\Z}{\mathbb{Z} }
\newcommand{\bx}{\mathbf{x}}
\newcommand{\rt}{\rightarrow}
\newcommand{\ov}{\overline}
\newcommand{\wt}{\widetilde }
\newcommand{\image}{\operatorname{image}}
\newcommand{\reg}{\operatorname{reg}}
\newcommand{\grade}{\operatorname{grade}}
\newcommand{\depth}{\operatorname{depth}}
\newcommand{\rank}{\operatorname{rank}}
\newcommand{\ann}{\operatorname{ann}}
\newcommand{\cx}{\operatorname{cx}}
\newcommand{\projdim}{\operatorname{projdim}}
\newcommand{\Spec}{\operatorname{Spec}}
\newcommand{\Syz}{\operatorname{Syz}}
\newcommand{\Ass}{\operatorname{Ass}}
\newcommand{\Hom}{\operatorname{Hom}}
\newcommand{\Ext}{\operatorname{Ext}}
\newcommand{\Tor}{\operatorname{Tor}}
\theoremstyle{plain}
\newtheorem{theorem}{Theorem}[section]
\newtheorem{corollary}[theorem]{Corollary}
\newtheorem{lemma}[theorem]{Lemma}
\newtheorem{proposition}[theorem]{Proposition}
\theoremstyle{definition}
\newtheorem{definition}[theorem]{Definition}
\newtheorem{remark}[theorem]{Remark}
\theoremstyle{remark}
\begin{document}

\title[Growth of Hilbert coefficients]{Growth of Hilbert coefficients of Syzygy modules}
\author{Tony~J.~Puthenpurakal}
\date{\today}
\address{Department of Mathematics, IIT Bombay, Powai, Mumbai 400 076}

\email{tputhen@math.iitb.ac.in}
\subjclass{Primary 13D40; Secondary 13A30}
 \begin{abstract}
Let $(A,\m)$ be a complete intersection ring of dimension $d$ and let $I$ be an $\m$-primary ideal. Let $M$ be a maximal \CM \  $A$-module.  For $i = 0,1,\cdots,d$,  let $e_i^I(M)$ denote the $i^{th}$ Hilbert -coefficient of $M$ with respect to $I$. We prove that for $i = 0, 1, 2$,   the function $j \mapsto e_i^I(\Syz_j^A(M))$ is of quasi-polynomial type with period $2$.
Let $G_I(M)$ be the associated graded module of $M$ with respect to $I$.    If $G_I(A)$ is Cohen-Macaulay and $\dim A \leq 2$ we also prove that the functions $j \mapsto \depth G_I(\Syz^A_{2j+i}(M))$ are eventually constant for $i = 0, 1$. 
Let $\xi_I(M) = \lim_{l \rt \infty}\depth G_{I^l}(M)$. 
Finally we prove that if $\dim A = 2$ and $G_I(A)$ is \CM \ then the functions $j \mapsto  \xi_I(\Syz^A_{2j + i}(M))$  are eventually constant for $i = 0, 1$.
\end{abstract}
 \maketitle
\section{introduction}
Let $(A,\m)$ be a Noetherian local ring of dimension $d$ and let $M$ be a finitely generated $A$-module of dimension $r$.  Let $I$ 
be an $\m$-primary ideal. Let $\ell(N)$ denote the length of an $A$-module $N$. The function $H^{(1)}_I(M,n) = \ell(M/I^{n+1}M)$ is called the \emph{Hilbert-Samuel} function of $M$ with respect to $I$. It is well-known that there exists a polynomial $P_I(M,X) \in \mathbb{Q}[X]$ of degree $r$ such that $P_I(M,n) = H^{(1)}_I(M,n)$ for $n \gg 0$. The polynomial $P_I(M,X)$ is called the Hilbert-Samuel polynomial of $M$ with respect to $I$.  We write
$$ P_I(M,X) = \sum_{i = 0}^{r}(-1)^ie_i^I(M)\binom{X+ r -i}{r-i}.$$
The integers $e_i^I(M)$ are called the $i^{th}$-Hilbert coefficent of $M$ with respect to $I$. The zeroth Hilbert coefficent $e_0^I(M)$ is called the \emph{multiplicity} of $M$ with respect to $I$. 

For $j \geq 0$ let $\Syz^A_j(M)$ denote the $j^{th}$ syzygy of $M$.
In this paper we investigate the function $j \mapsto e_i^I(\Syz^A_j(M))$ for $i \geq 0$.
It becomes quickly apparent that for reasonable answers  we need that the minimal resolution of $M$ should have some structure.  Minimal resolutions of modules over complete intersection rings have a good structure.  If $A = B/(f_1,\cdots,f_c)$ with $\mathbf{f} = f_1,\cdots,f_c$ a $B$-regular sequence and $\projdim_B M$ is finite then also the minimal resolution of $M$  has a nice structure. The definitive class of modules with a good structure theory of their minimal resolution is the class of modules with finite complete intersection dimension, see \cite{AGP}.   We are able to prove our results for a more restrictive class of modules than modules of finite CI-dimension.

\begin{definition}\label{gci}
 We say the $A$ module $M$ has finite GCI-dimension if there is a flat  local extension $(B,\n)$ of $A$  such that
 \begin{enumerate}
 \item
 $\m B = \n$.
 \item
  $B = Q/(f_1,\cdots,f_c)$ , where $Q$ is local and $f_1,\ldots,f_c$ is a $Q$-regular sequence.
\item
$\projdim_Q M\otimes_A B$ is finite.
\end{enumerate}
\end{definition}
We note that every finitely generated module over an abstract complete intersection ring has finite GCI dimension.  If $A = R/(f_1,\cdots,f_c)$ with $f_1,\cdots,f_c$ a $R$-regular sequence and $\projdim_R M$ is finite then also $M$ has finite GCI-dimension. We also note that if $M$ has finite GCI dimesnion then it has finite  CI-dimension. If $M$ has finite CI-dimension then the function $i \mapsto \ell(\Tor^A_i(M,k))$ is of quasi-polynomial type  with degree two. Set $\cx(M) = $ degree of this function $+ 1$. (See  \ref{degree} for degree of a function of quasi-polynomial type).

Let $G_I(A) = \bigoplus_{n \geq 0} I^n/I^{n+1}$ be the associated graded ring of $A$ \wrt \ $I$. Let $G_I(M) = \bigoplus_{n \geq 0} I^nM/I^{n+1}M$ be the associated graded module of $M$ \wrt \ $I$.   
Our main result is
\begin{theorem} \label{main}
Let $(A,\m)$ be a \CM \ local ring of dimension $d $ and let $M$ be a maximal \CM \ $A$-module.  Let $I$ be an $\m$-primary  ideal. Assume $M$ has finite GCI dimension. Then 
for $i = 0,1,2$,  the function  $j \mapsto e_i^I(\Syz^A_j(M))$ 
is of quasi-polynomial type with period two and degree $\leq \cx(M)-1$.
\end{theorem}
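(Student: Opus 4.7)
The plan is to reduce to the complete intersection setting and then analyze a linear recurrence on Hilbert coefficients of syzygies coming from the Tor long exact sequence. By the definition of finite GCI dimension there is a faithfully flat local extension $(A,\m)\to (B,\n)$ with $\m B = \n$ and $B = Q/(\bF)$ for a $Q$-regular sequence $\bF = f_1,\ldots,f_c$, with $\projdim_Q(M\otimes_A B)$ finite. Faithful flatness preserves Hilbert--Samuel coefficients, and $\m B = \n$ ensures that a minimal $A$-resolution of $M$ tensors to a minimal $B$-resolution of $M\otimes_A B$, so syzygies behave correctly. Hence one may assume $A = Q/(\bF)$ with $\projdim_Q M$ finite, and (after a harmless further flat extension) that the residue field of $A$ is infinite. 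Since $M$ is MCM over the \CM\ ring $A$, every $\Syz_j^A(M)$ is MCM of dimension $d$.

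The recurrence is set up as follows. Applying $-\otimes_A A/I^{n+1}$ to the short exact sequence $0\to \Syz_{j+1}^A(M) \to A^{b_j} \to \Syz_j^A(M) \to 0$, where $b_j$ is the $j$-th Betti number of $M$, produces the four-term sequence
\[
0 \to \Tor_j^A(M, A/I^{n+1}) \to \Syz_{j+1}^A(M)/I^{n+1}\Syz_{j+1}^A(M) \to (A/I^{n+1})^{b_j} \to \Syz_j^A(M)/I^{n+1}\Syz_j^A(M) \to 0.
\]
By Kodiyalam--Theodorescu there is a polynomial $T_j(X)$ of degree $\leq d$ with $T_j(n) = \ell(\Tor_j^A(M,A/I^{n+1}))$ for $n \gg 0$. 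Reading off the $i$-th coefficient of the resulting identity of Hilbert--Samuel polynomials yields
\[
e_i^I(\Syz_{j+1}^A(M)) + e_i^I(\Syz_j^A(M)) = b_j\, e_i^I(A) + c_i(j),
\]
where $c_i(j)$ is a fixed linear combination of the coefficients of $T_j$.

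To solve this recurrence I must control both driving sequences. First, $j \mapsto b_j$ is quasi-polynomial of period two and degree $\cx(M)-1$ by the Avramov--Gasharov--Peeva structure theorem for modules of finite CI dimension. Second, for $c_i(j)$, the key input is the Gulliksen--Eisenbud theorem: for any finite length $A$-module $N$ the cohomology operators make $\bigoplus_j \Tor_j^A(M,N)$ a finitely generated graded module over $\R = A[t_1,\ldots,t_c]$ with $\deg t_i = 2$. Letting $N$ vary in the tower of quotients $A/I^{n+1}$, I would show that the bigraded module $\bigoplus_{j,n}\Tor_j^A(M, A/I^{n+1})$ is finitely generated over $\R$ tensored with the Rees algebra of $I$, producing a rational bigraded Hilbert series whose denominator divides $(1-s)^{d+1}(1-t^2)^c$. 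Extracting coefficients then shows each $j \mapsto c_i(j)$ is quasi-polynomial of period two and degree $\leq \cx(M)-1$; a standard generating function analysis of the recurrence produces the theorem for $i = 0, 1, 2$.

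The main obstacle is two-fold. First, the finite generation of the bigraded Tor module must be verified, which requires checking compatibility between the Eisenbud cohomology operators and the Rees grading on the second factor. Second, the naive generating function analysis of the recurrence produces a pole of order $\cx(M)+1$ at $t = -1$, which would yield only degree $\leq \cx(M)$; achieving the sharper bound $\cx(M)-1$ claimed in the theorem requires an additional cancellation coming from the specific structure of the correction term $c_i(j)$, and it is precisely in verifying this cancellation that the restriction to $i \in \{0, 1, 2\}$ becomes essential, since for higher $i$ the top-degree coefficients of $T_j(X)$ contribute terms that destroy the cancellation.
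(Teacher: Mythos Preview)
Your setup is sound through the four-term exact sequence and the recurrence on Hilbert coefficients; this is exactly how the paper handles $e_0$ and is the starting point for $e_1$ and $e_2$ as well. The gap is in the step you flag yourself as the ``main obstacle'': you invoke Gulliksen--Eisenbud to assert that $\bigoplus_j \Tor_j^A(M,N)$ is a finitely generated $A[t_1,\ldots,t_c]$-module, and then bootstrap this to finite generation of the bigraded module $\bigoplus_{j,n}\Tor_j^A(M,A/I^{n+1})$ over the ring of Eisenbud operators tensored with the Rees algebra. But Gulliksen's finiteness theorem is for $\Ext$, not $\Tor$. On $\Tor$ the Eisenbud operators act by \emph{lowering} homological degree, $t_l\colon \Tor_{j+2}\to\Tor_j$, so they cannot generate the high-degree pieces from finitely many low-degree ones. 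What one gets instead (via Matlis duality, as in the paper's Proposition~2.13) is that $\bigoplus_j \Tor_j^A(M,N)$ is $*$-Artinian over $A[t_1,\ldots,t_c]$ for each fixed $N$ of finite length. Artinianness in one direction and Noetherianness in the other do not combine to give the rational bigraded Hilbert series you need, so the extraction of a uniform quasi-polynomial $c_i(j)$ of degree $\leq \cx(M)-1$ does not follow. Incidentally, your approach \emph{is} precisely what the paper uses for the dual Hilbert--Samuel coefficients (Theorem~\ref{main-dual}), where the relevant bigraded object is built from $\Ext$ and genuine finite generation holds---which is why that result covers all $i=0,\ldots,d$.

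The paper's route for ordinary $e_i$ is quite different and explains the restriction $i\le 2$ in a way unrelated to your ``cancellation'' diagnosis. One first reduces to low dimension by going modulo a superficial sequence, using that $e_i^I(M)=e_i^I(M/xM)$ for $i<\dim M$. For $e_1$ one reduces to $\dim A=1$; there a key lemma shows $\Tor_j^A(M,A/I^n)\cong\Tor_j^A(M,A/I^{n+1})$ once $n$ exceeds the reduction number, so a single fixed $n$ suffices and the $*$-Artinian result on $\Tor$ (in $j$ alone) finishes. For $e_2$ one reduces to $\dim A=2$ and then invokes a more delicate argument with the modules $L^I_i(M)=\bigoplus_n\Tor_i^A(M,A/I^{n+1})$, Koszul homology with respect to a superficial element, and local cohomology to compare with the one-dimensional case. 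The barrier to $i\ge 3$ is that no analogous machinery is available in dimension $3$, not a degree-count in a generating function. (On that last point: the recurrence $e_i(\Syz_{j+1})+e_i(\Syz_j)=\text{RHS}(j)$ does not raise degree---since $e_0,e_1\ge 0$ for MCM modules, one has $\deg(e_i(\Syz_j))=\deg(\text{RHS})$ directly.)
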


Next we consider the  asymptotic behavior  of depth of associated graded modules  of syzygy modules.  We prove
\begin{theorem}\label{main-depth}
Let $(A,\m)$ be a \CM \ local ring of dimension $\leq 2$ and let $M$ be a maximal \CM \ $A$-module.  Let $I$ be an $\m$-primary  ideal with $G_I(A)$ \CM. Assume $M$ has finite GCI dimension. Then
the functions $j \mapsto \depth G_I(\Syz^A_{2j}(M))$ and $j \mapsto \depth G_I(\Syz^A_{2j+1}(M))$ are constant for $j \gg 0$.
\end{theorem}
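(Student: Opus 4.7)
My plan is to translate the statement ``$\depth G_I(\Syz_j^A(M)) \geq s$'' into the vanishing of a function of $j$ governed by Theorem~\ref{main}, and then apply polynomial pigeonhole on each parity class. First, by the faithfully flat extension $A \to A(T) = A[T]_{\m A[T]}$ (which preserves depth of associated graded, Hilbert coefficients, and the GCI-hypothesis), I would assume $k = A/\m$ is uncountable. By Definition~\ref{gci} I would further assume $A = Q/(\mathbf{f})$ with $\mathbf{f}$ a $Q$-regular sequence and $\projdim_Q M$ finite. Over an uncountable residue field, the non-superficial directions for any single module form a proper Zariski-closed subset of $I/\m I$, and a countable union of such subvarieties cannot cover $I/\m I$; so a common $I$-superficial sequence $x_1, \ldots, x_d$ exists for the countable family $\{\Syz_j^A(M)\}_{j \geq 0}$. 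Since every $\Syz_j^A(M)$ is maximal \CM, this sequence is $\Syz_j^A(M)$-regular for all $j$.

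Next, because $x_1$ is $A$-regular and $\Syz_j^A(M)$-regular for all $j$, tensoring a minimal $A$-free resolution of $M$ with $A/(x_1)$ remains exact, giving
\[
\Syz_j^A(M)/x_1 \Syz_j^A(M) \;\cong\; \Syz_j^{A/(x_1)}(M/x_1 M).
\]
The triple $(A/(x_1), M/x_1 M, \overline{I})$ satisfies the hypotheses of Theorem~\ref{main}: $A/(x_1)$ is \CM\ of dimension $d-1$, $M/x_1 M$ is maximal \CM, and finite GCI-dimension descends by adjoining a lift of $x_1$ to the upstairs regular sequence. Hence $j \mapsto e_i^I(\Syz_j^A(M)/x_1 \Syz_j^A(M))$ is of quasi-polynomial type with period $2$ for $i = 0, 1, 2$, and the same holds after modding out by $(x_1, x_2)$ when $d = 2$.

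For the depth criterion, the Valabrega--Valla theorem yields, for $N = \Syz_j^A(M)$ with $G_I(A)$ \CM,
\[
\depth G_I(N) \geq s \;\iff\; x_1^*, \ldots, x_s^* \text{ regular on } G_I(N) \;\iff\; H_{G_I(N/(x_1,\ldots,x_s)N)}(t) = (1-t)^s H_{G_I(N)}(t).
\]
For $d \leq 2$ this rational-function identity reduces, by the classical Huckaba--Marley / Rossi--Valla comparisons, to a finite list of equalities among $e_i^I(N)$ and $e_i^I(N/(x_1,\ldots,x_s)N)$ for $i \leq d$: concretely, when $d = 2$, $\depth G_I(N) \geq 1$ is equivalent to $e_1^I(N) = e_1^I(N/x_1 N)$, and $G_I(N)$ being \CM\ is captured by a companion $e_2^I$ equality after modding out by $x_1, x_2$. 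When $d = 1$ (so $s = 1$ already forces the quotient to be artinian) the criterion additionally invokes the initial Hilbert-function value $\ell(N/IN)$, which for $N = \Syz_j^A(M)$ is itself quasi-polynomial in $j$ by the known quasi-polynomial behavior of Betti-type invariants for modules of finite CI-dimension combined with Theorem~\ref{main}.

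Finally, each such criterion writes as $\delta(j) = 0$ for a function $\delta(j)$ which is quasi-polynomial of period $2$ (being a difference of quasi-polynomials supplied by Theorem~\ref{main} and the second step). On each parity class $j \equiv \varepsilon \pmod 2$, $\delta$ agrees with a polynomial for $j \gg 0$, which is either identically zero or has only finitely many zeros. Hence $\{j : \depth G_I(\Syz_j^A(M)) \geq s\}$ is eventually constant on each parity class, and intersecting over $s \in \{1, \ldots, d\}$ yields the eventual constancy of $\depth G_I(\Syz_j^A(M))$ itself. The main obstacle is the third step: pinning down the finite checklist of Hilbert-coefficient equalities that encodes ``$x^*$-sequence is regular on $G_I(N)$'' entirely within the $e_0^I, e_1^I, e_2^I$ range supplied by Theorem~\ref{main}; the case $d = 1$ and the case $s = d = 2$, where the relevant quotient module is artinian, require extra care to keep all the numerics inside the quasi-polynomial regime.
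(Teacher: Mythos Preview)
Your overall strategy---translate ``$\depth G_I(N)\ge s$'' into an equality of Hilbert coefficients, feed both sides through Theorem~\ref{main}, and apply polynomial pigeonhole on each parity---is viable and genuinely different from the paper's argument, but the concrete criterion you write down is wrong. You assert that in dimension~$2$, $\depth G_I(N)\ge 1$ is equivalent to $e_1^I(N)=e_1^I(N/x_1N)$; this equality holds \emph{automatically} for any $N$-superficial $x_1$ (see \ref{mod-sup-h}(3)(a): $e_i(N)=e_i(N/xN)$ for all $i<\dim N$), so it detects nothing. The correct criterion is \ref{mod-sup-h}(4)(d): $x^*$ is $G_I(N)$-regular iff $e_r^I(N)=e_r^I(N/xN)$ with $r=\dim N$, the point being that $b_I(N,z)$ has nonnegative coefficients, so $b_I(N,1)=0$ already forces $b_I(N,z)=0$. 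Thus in dimension $2$ the test for $\depth\ge 1$ is the $e_2$-equality, and the test for $\depth\ge 2$ is Sally descent \ref{mod-sup-h}(5) together with the $e_1$-equality one dimension down (not an ``$e_2$ equality after modding out by $x_1,x_2$'' as you write). Likewise in dimension~$1$ the criterion is simply $e_1^I(N)=e_1^I(N/xN)$; no extra $\ell(N/IN)$ term is needed, and the Huckaba--Marley/Rossi--Valla references are a red herring here.

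Once the indices are fixed your route does go through: Theorem~\ref{main} applied to $(A,M)$, to $(A/(x_1),M/x_1M)$, and to $(A/(x_1,x_2),M/(x_1,x_2)M)$ supplies $e_2^I(M_i)$, $e_2^{\bar I}(\overline{M_i})$, $e_1^{\bar I}(\overline{M_i})$, and $e_1(\overline{\overline{M_i}})$ as period-$2$ quasi-polynomials in $i$, and your pigeonhole step is then correct. The paper instead works directly with the obstruction: it gives $K=\bigoplus_i H_1(xu,L_0^I(M_i))$ the bigraded $\R[t_1,\dots,t_c]$-structure coming from the Eisenbud operators, observes that the relevant truncation is $*$-Artinian over $A[t_1,\dots,t_c]$, and concludes that $i\mapsto\ell(K_i)$ is quasi-polynomial of period~$2$ (Theorems~\ref{asymp-depth-1} and \ref{asymp-depth-2}, using Lemma~\ref{depth1} in dimension~$2$). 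Since $\ell(K_i)=b_I(M_i,1)=|e_d^I(M_i)-e_d^I(\overline{M_i})|$, the two arguments compute the same quantity; yours is shorter if Theorem~\ref{main} is taken as a black box, while the paper's is self-contained and exhibits the Artinian structure that is doing the work.
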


If $M$ is a \CM \ $A$-module and $I$ is $\m$-primary then it is known that $\depth G_{I^s}(M)$ is constant for all $s \gg 0$, see \cite[2.2]{Elias} (also see \cite[7.6]{Pu2}). Set $\xi_I(M) = \lim \depth G_{I^s}(M)$. We prove the following:
\begin{theorem}\label{main-depth-a}
Let $(A,\m)$ be a \CM \ local ring of dimension $2$ and let $M$ be a maximal \CM \ $A$-module.  Let $I$ be an $\m$-primary  ideal with $G_I(A)$ \CM. Assume $M$ has finite GCI dimension. Then
the functions $j \mapsto  \xi_I(\Syz^A_{2j}(M))$ and $j \mapsto  \xi_I(\Syz^A_{2j+1}(M))$ are constant for $j \gg 0$.
\end{theorem}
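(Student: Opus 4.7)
The plan is to reduce Theorem \ref{main-depth-a} to Theorem \ref{main-depth} by applying the latter with $I$ replaced by its powers $I^l$. First, for every $l \geq 1$ the associated graded ring $G_{I^l}(A)$ remains Cohen-Macaulay; this may be verified via the Valabrega--Valla criterion applied to a minimal reduction $J$ of $I$, noting that $J^l$ is then a minimal reduction of $I^l$. The condition that $M$ has finite GCI dimension is a property of $M$ alone, independent of the chosen $\m$-primary ideal. Thus Theorem \ref{main-depth} applies to $I^l$ and gives, for each fixed $l \geq 1$ and $i \in \{0,1\}$, that the function $j \mapsto \depth G_{I^l}(\Syz^A_{2j+i}(M))$ is eventually constant in $j$; denote this stable value by $c_i(l)$, attained for $j \geq J_i(l)$.

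Next, by the theorem of Elias \cite[2.2]{Elias} (see also \cite[7.6]{Pu2}), for every finitely generated $A$-module $N$ there is a threshold $L(N) \in \mathbb{N}$ with $\depth G_{I^l}(N) = \xi_I(N)$ for all $l \geq L(N)$. The crux of the argument is the following uniform-stability claim: there exist integers $j_0$ and $l_0$ such that $L(\Syz^A_{2j+i}(M)) \leq l_0$ for all $j \geq j_0$ and $i \in \{0,1\}$. Granted this claim, take $j \geq \max(j_0, J_i(l_0))$; then
\[
\xi_I(\Syz^A_{2j+i}(M)) = \depth G_{I^{l_0}}(\Syz^A_{2j+i}(M)) = c_i(l_0),
\]
a value independent of $j$, proving the theorem.

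The main obstacle is thus the uniform-stability claim, which amounts to an interchange of the limit in $l$ defining $\xi_I$ with the limit in $j$. The threshold $L(N)$ is controlled by the Ratliff--Rush stability index of $(I,N)$, which in turn is bounded by the Castelnuovo--Mumford regularity of the Rees module $\mathcal{R}_I(N) = \bigoplus_{n \geq 0} I^n N$ over the Rees algebra $\mathcal{R}_I(A)$. In dimension two with $G_I(A)$ Cohen-Macaulay, one can estimate this regularity in terms of the Hilbert coefficients $e_0^I(N)$, $e_1^I(N)$, $e_2^I(N)$ together with $\depth G_I(N)$. Theorem \ref{main} provides quasi-polynomial control of these Hilbert coefficients along the family $\{\Syz^A_{2j+i}(M)\}_j$, while Theorem \ref{main-depth} yields eventual constancy of $\depth G_I(\Syz^A_{2j+i}(M))$. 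Combining these inputs should give a uniform upper bound on the regularity of $\mathcal{R}_I(\Syz^A_{2j+i}(M))$ for $j$ large, establishing the uniform-stability claim and hence the theorem.
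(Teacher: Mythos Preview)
Your reduction strategy is natural, but the uniform-stability claim is where the argument breaks, and the justification you sketch for it does not hold. You propose to bound the threshold $L(N)$ via the Castelnuovo--Mumford regularity of $\R_I(N)$, and then to bound that regularity in terms of $e_0^I(N),e_1^I(N),e_2^I(N)$ together with $\depth G_I(N)$. There is no such bound: modules with the same Hilbert coefficients and the same depth of the associated graded module can have arbitrarily large regularity. Even if an inequality of this shape existed, Theorem~\ref{main} only says that $e_i^I(\Syz^A_j(M))$ is quasi-polynomial of degree $\leq \cx(M)-1$ in $j$; these coefficients \emph{grow} polynomially, so any estimate expressed through them would grow as well and could not be uniform. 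The paper's own Theorem~\ref{main-reg} confirms this: $a_{d-1}(G_I(\Syz^A_i(M)))$ is only shown to be $O(i^{\cx(M)-1})$, so the relevant regularity of the syzygies genuinely increases with $i$ and cannot furnish a uniform $l_0$. The interchange of the limit in $l$ with the limit in $j$ that you need is therefore not available by this route.

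The paper bypasses this difficulty entirely. It uses the criterion $\xi_I(N)\geq 2 \Longleftrightarrow H^1_{\M}(L^I_0(N))_{-1}=0$ (see \ref{reductions}), which is invariant under the Veronese functor. A single replacement $I\rightsquigarrow I^m$ with $m$ at least the reduction number (Lemma~\ref{need}) forces $H^1_{\M}(L^I_0(M_i))_n=0$ for all $n\geq 0$ and all $i$ simultaneously; no uniform stabilization of $\depth G_{I^l}(M_i)$ is required. The remaining question, whether the degree $-1$ component vanishes, is recast as an equality between two length functions $i\mapsto \ell(H^0_{\M}(L^I_1(M_i))_0)$ and $i\mapsto \ell(H^0_{\M}(L^I_1(\ov{M_i}))_0)$, each of which is quasi-polynomial of period two via the Eisenbud-operator bigraded structure. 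Since an inequality between two such quasi-polynomials is either strict or an equality for all large $i$ in each parity class, the result follows.
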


\s \emph{Dual Hilbert-Samuel function:} Assume $A$ has a canonical module $\omega$. The function $D^I(M,n) = \ell( \Hom_A(M, \omega/I^{n+1}\omega)$ is called the \emph{dual Hilbert-Samuel} function of $M$ \wrt \  $I$. In \cite{PuZ} it is shown that there exist a polynomial $t^I(M,z) \in \mathbb{Q}[z]$ of degree $d$ such that $t^I(M,n) = D^I(M,n)$ for all $n \gg 0$.
We write
$$ t^I(M,X) = \sum_{i = 0}^{d}(-1)^ic_i^I(M)\binom{X+ r -i}{r-i}.$$
The integers $c_i^I(M)$ are called the $i^{th}$- dual Hilbert coefficient of $M$ with respect to $I$. The zeroth dual Hilbert coefficient $c_0^I(M)$ is equal to $e_0^I(M)$, see \cite[2.5]{PuZ}.
We prove:
\begin{theorem}\label{main-dual}
Let $(A,\m)$ be a \CM  \ local ring  of dimension $d$, with a canonical module $\omega$ and let $I$ be an $\m$-primary ideal. Let $M$ be a maximal \CM \ $A$-module. Assume $M$ has finite GCI dimension.  Then
for $i = 0,1, \cdots, d$  the function  $j \mapsto c_i^I(\Syz^A_j(M))$ 
is of quasi-polynomial type with period two. If $A$ is a complete intersection then the degree of each of the above functions $\leq \cx(M)-1$.
\end{theorem}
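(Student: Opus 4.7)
The plan is to run a period-$2$ recurrence on $j$ for the dual Hilbert--Samuel function $D^I(\Syz^A_j(M), n)$, obtained by dualising the defining syzygy sequences into $\omega/I^{n+1}\omega$, and then to extract each dual coefficient $c_i^I$ from the resulting identity. The crucial extra ingredient that makes \emph{all} coefficients $c_0, \ldots, c_d$ tractable --- in contrast with the $c_0, c_1, c_2$ of Theorem~\ref{main} --- is the vanishing $\Ext^k_A(N, \omega) = 0$ for all $k \geq 1$, available for every maximal Cohen--Macaulay $A$-module $N$. By the faithfully flat reduction of Definition~\ref{gci}, under which canonical modules, lengths, syzygies, and $D^I$ all behave well, we may assume $A = Q/(\mathbf{f})$ with $Q$ Cohen--Macaulay local, $\mathbf{f} = f_1, \ldots, f_c$ a $Q$-regular sequence, and $\projdim_Q M$ finite. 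Each $\Syz^A_j(M)$ is then also maximal Cohen--Macaulay and of finite GCI dimension.

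The core step is to apply $\Hom_A(-, \omega/I^{n+1}\omega)$ to $0 \to \Syz^A_{j+1}(M) \to F_j \to \Syz^A_j(M) \to 0$, with $F_j = A^{\beta_j(M)}$. Using $\Ext^1(F_j, -) = 0$ and the dimension shift $\Ext^1(\Syz^A_j(M), -) = \Ext^{j+1}_A(M, -)$ yields
$$D^I(\Syz^A_{j+1}(M), n) + D^I(\Syz^A_j(M), n) = \beta_j(M)\,\ell(\omega/I^{n+1}\omega) + \ell\bigl(\Ext^{j+1}_A(M, \omega/I^{n+1}\omega)\bigr).$$
Telescoping in $j$ and extracting the coefficient of $\binom{n+d-i}{d-i}$ writes $c_i^I(\Syz^A_j(M))$ as an alternating partial sum over $k < j$ whose summands are, up to signs, $\beta_k(M)\,e_i^I(\omega)$ and the $i$-th $n$-Hilbert coefficient of $\ell(\Ext^{k+1}_A(M, \omega/I^{n+1}\omega))$. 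The Betti numbers $\beta_k(M)$ are quasi-polynomial in $k$ of period $2$ and degree $\cx(M)-1$ by \cite{AGP}; and via Eisenbud's CI operators $\chi_1, \ldots, \chi_c$ (of degree $2$), the total module $\bigoplus_k \Ext^k_A(M, N)$ is finitely generated over $A[\chi_1, \ldots, \chi_c]$ for every finitely generated $N$, giving the same period-$2$, degree $\leq \cx(M)-1$ control on $k \mapsto \ell(\Ext^{k+1}_A(M, \omega/I^{n+1}\omega))$ for each fixed $n$. An elementary check shows that the alternating partial sums $S(j) = \sum_{k<j}(-1)^{j-1-k}a_k$ of a period-$2$ quasi-polynomial $a$ remain quasi-polynomial of the same period and degree (indeed $S(j+2)-S(j) = a_{j+1}-a_j$), so the claim follows.

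The main obstacle is the joint behaviour of $\ell(\Ext^{k+1}_A(M, \omega/I^{n+1}\omega))$ in both $k$ and $n$: each one-variable behaviour is available from standard machinery, but to conclude that \emph{every} $n$-Hilbert coefficient of this length is itself quasi-polynomial in $k$ of period $2$ and the correct degree, one needs a bigraded refinement. Concretely, one shows that $\bigoplus_{k, n \geq 0} \Ext^k_A(M, \omega/I^{n+1}\omega)$ is finitely generated as a bigraded module over $A[\chi_1, \ldots, \chi_c] \otimes_A G_I(A)$, by running the CI-operator construction on a minimal resolution of $M$ compatibly with the long exact sequences associated to $0 \to I^{n+1}\omega \to \omega \to \omega/I^{n+1}\omega \to 0$, and using the vanishing $\Ext^k(N, \omega) = 0$ for $k \geq 1$ and MCM $N$ to eliminate the boundary terms at each syzygy stage. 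Once this bivariate finite-generation is secured, both the period-$2$ quasi-polynomial behaviour in $j$ and the degree bound $\cx(M) - 1$ in the complete intersection case drop out from the one-variable Avramov--Gasharov--Peeva bound applied fibrewise.
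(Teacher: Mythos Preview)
Your approach is the paper's: dualise the syzygy sequences into $\omega/I^{n+1}\omega$, use $\Ext^{\geq 1}_A(M,\omega)=0$ to obtain the clean length identity, and reduce to showing that the $n$-Hilbert coefficients of $\ell\bigl(\Ext^{j+1}_A(M,\omega/I^{n+1}\omega)\bigr)$ are quasi-polynomial in $j$ of period two via a bigraded finite-generation statement with Eisenbud operators; then feed the recurrence into Corollary~\ref{g2-c} (your alternating-sum remark $S(j+1)+S(j)=a_j$ is exactly that corollary).

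However, your bigraded ring is wrong, and repairing it forces an extra step you omit. The module $L_0^I(\omega)=\bigoplus_n \omega/I^{n+1}\omega$ carries a natural action of the Rees algebra $\R=A[Iu]$, not of $G_I(A)$: for $a\in I^{m+1}$ the induced map $\omega/I^{n+1}\omega\to\omega/I^{n+m+1}\omega$ is generally nonzero, so the action does not descend to the associated graded ring. The paper works over $S=\R[t_1,\ldots,t_c]$ and invokes the nontrivial finite-generation theorem \cite[1.1]{Pu3}: for any finitely generated graded $\R$-module $N=\bigoplus_n N_n$, the bigraded module $\bigoplus_{i,n}\Ext^i_A(M,N_n)$ is finitely generated over $S$. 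Since $L_0^I(\omega)$ itself is \emph{not} finitely generated over $\R$, this is applied instead to the Rees module $\bigoplus_n I^n\omega$, after which the isomorphism $\Ext^i_A(M,\omega/I^{n+1}\omega)\cong\Ext^{i+1}_A(M,I^{n+1}\omega)$ (coming from the vanishing you identify) exhibits $\bigoplus_{i\geq 1,\,n\geq 0}\Ext^i_A(M,\omega/I^{n+1}\omega)$ as a bigraded $S$-submodule. Because $\dim\R=d+1$, one must then observe that the degree-$(0,0)$ annihilator is $\m$-primary and pass to $\R/\q\R$ to bring the $z$-denominator down to $(1-z)^d$, as required by Lemma~\ref{g2-growth}; your choice of $G_I(A)$ may have been an attempt to get dimension $d$ for free, but the module structure is simply not there. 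Finally, for the complete-intersection degree bound the paper does not argue ``fibrewise'': it re-presents $A=R/(g_1,\ldots,g_r)$ with $r=\cx(M)$ and $\projdim_R M<\infty$ via \cite[3.9]{LLAV}, and reruns the whole argument with $c$ replaced by $r$.
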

Although  Theorem \ref{main-dual} looks more complicated than Theorem \ref{main},  its proof is
 considerably simpler.

Let $H^i(-)$ denote the $i^{th}$ local cohomology functor of $G_I(A)$ \wrt \ $G_I(A)_+ = \bigoplus_{n > 0}I^n/I^{n+1}$. Set
\[
\reg(G_I(M)) = \max\{ i+j \mid H^i(G_I(M))_j \neq 0 \},
\]
the \emph{regularity} of $G_I(M)$. Set
\[
a_i(G_I(M)) = \max \{ j \mid H^i(G_I(M))_j \neq 0 \}.
\]
Assume that the residue field of $A$ is infinite. Let $J$ be a minimal reduction of $I$. Say $I^{r+1} = JI^r$. Let $M$ be a maximal \CM \ $A$-module. Then it is well-known that $a_d(G_I(M))  \leq r - d $; see \cite[3.2]{T}(also see \cite[18.3.12]{BS}). We prove
\begin{theorem} \label{main-reg}
Let $(A,\m)$ be a \CM \ local ring of dimension $d \geq 2 $ and let $M$ be a maximal \CM \ $A$-module.  Let $I$ be an $\m$-primary  ideal. Assume $M$ has finite GCI dimension. Then the set
\[
\left\{ \frac{a_{d-1}(G_I(\Syz^A_i(M))}{i^{\cx(M)-1}}  \right\}_{i \geq 1 }
\]
is bounded.
\end{theorem}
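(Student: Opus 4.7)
The plan is to bound $a_{d-1}(G_I(N))$ for an arbitrary maximal \CM \ module $N$ by a linear combination of the ``small'' Hilbert coefficients $e_0^I(N), e_1^I(N)$ and of the number of generators $\mu(N)$, with constants depending only on $(A,I)$, and then to apply Theorem \ref{main} together with the quasi-polynomial growth of the Betti numbers of $M$.

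First I would make the standard harmless reductions: pass to a faithfully flat extension so that the residue field is infinite (this does not change any $a_i$ or $e_i$), and replace $M$ by $M\otimes_A B$ where $B$ is the ring from Definition \ref{gci}, so that a genuine complete intersection presentation is available.  Next, for a maximal \CM \ module $N$ I would pick a superficial sequence $\bx = x_1,\ldots,x_{d-1}$ for $I$ which is $N$-regular, reduce to dimension one, and use the standard Sally-type short exact sequences in $G_I(A)$ to relate $H^{d-1}(G_I(N))$ to $H^0(G_I(N/\bx N))$ up to terms controlled by $\mu(N)$ and $e_0^I(N)$. In dimension one $a_0$ is essentially the Ratliff--Rush index, which is known to be bounded by $e_1^I(-) + e_0^I(-)\cdot \text{const} + \mu(-)\cdot \text{const}$ (the usual argument does not need $G_I(A)$ to be \CM; the defect is absorbed into the constants because it depends only on $A$ and $I$). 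The upshot is an inequality
\[
a_{d-1}(G_I(N)) \;\leq\; \alpha\, e_1^I(N) + \beta\, e_0^I(N) + \gamma\, \mu(N) + \delta,
\]
valid for all maximal \CM \ $N$, where $\alpha,\beta,\gamma,\delta$ depend only on $(A,I)$.

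With the inequality in hand the theorem follows readily.  Specializing $N = \Syz^A_i(M)$, Theorem \ref{main} gives that $e_0^I(\Syz^A_i(M))$ and $e_1^I(\Syz^A_i(M))$ are of quasi-polynomial type in $i$ of degree $\leq \cx(M)-1$, hence are $O(i^{\cx(M)-1})$.  Since $M$ has finite GCI-dimension (so in particular finite CI-dimension), the Betti numbers $\mu(\Syz^A_i(M)) = \ell(\Tor^A_i(M,k))$ are of quasi-polynomial type of degree $\cx(M)-1$ by definition of complexity, so $\mu(\Syz^A_i(M)) = O(i^{\cx(M)-1})$ as well.  Plugging into the inequality gives $a_{d-1}(G_I(\Syz^A_i(M))) = O(i^{\cx(M)-1})$, which is the assertion that the displayed set is bounded.

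The hard part will be the first step, namely the uniform bound on $a_{d-1}(G_I(N))$.  Without assuming $G_I(A)$ is \CM \ the $a$-invariants do not behave cleanly under reduction by a superficial element, and one must separate the contribution that genuinely comes from $N$ (captured by $e_1^I(N)$ and $\mu(N)$) from the contribution that comes from the failure of $G_I(A)$ to be \CM \ (which is absorbed into the universal constants because it is independent of $N$).  Everything else, in particular the appeal to Theorem \ref{main} and the polynomial growth of the Betti numbers, is then formal.
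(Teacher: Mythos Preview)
Your overall strategy---bound $a_{d-1}(G_I(N))$ uniformly for every maximal \CM\ module $N$ by numerical invariants of $N$, then feed in Theorem~\ref{main}---is reasonable, and in fact the paper's proof can be read as doing something close to this.  But the specific inequality you propose,
\[
a_{d-1}(G_I(N)) \;\leq\; \alpha\, e_1^I(N) + \beta\, e_0^I(N) + \gamma\, \mu(N) + \delta,
\]
is not justified by your sketch, and the invariants you have chosen are the wrong ones.

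First, the dimension-one step is not where the content is.  By Lemma~\ref{crucial}(4) the Ratliff--Rush index of \emph{every} MCM module over a one-dimensional \CM\ ring is bounded by the reduction number $r$ of $I$, a constant depending only on $(A,I)$.  So in dimension one your bound holds trivially with $\alpha=\beta=\gamma=0$; neither $e_1$ nor $\mu$ enters.  All of the genuine difficulty is in the passage from dimension~$2$ to dimension~$1$, which you describe only as ``up to terms controlled by $\mu(N)$ and $e_0^I(N)$''.  That is exactly the step that needs work, and the error term one actually meets there is
\[
\ell\bigl(B^I(N)\bigr) \;=\; \sum_{n\ge0}\ell\!\left(\frac{I^{n+1}N:x}{I^nN}\right) \;=\; e_2^I(N/xN)-e_2^I(N),
\]
which is governed by $e_2$, not by $e_1$ or $\mu$.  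In particular there is no obvious lower bound on $e_2^I(N)$ in terms of $e_0,e_1,\mu$ alone, so your inequality as stated is unsupported.

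Second, even after one identifies $\ell(B^I(N))$ as the relevant error term, one still has to convert this \emph{length} into a bound on the \emph{top degree} $\rho^I(N)$ of the Ratliff--Rush deviation (which is what controls $a_{d-1}$).  The paper does this by proving that for $n\ge r$ the lengths $\ell(\widetilde{I^nN}/I^nN)$ are strictly decreasing until they hit zero, whence $\rho^I(N)\le r+1+\ell(B^I(N))$.  This step is not automatic and you have not addressed it.

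If you replace $e_1$ by $e_2$ and supply the strict-decrease argument, your route does go through: one obtains $a_{d-1}(G_I(N))\le C_1 + C_2\,e_0^I(N) - e_2^I(N)$ for constants depending only on $(A,I)$, and then Theorem~\ref{main} (for $i=0,2$) finishes.  The paper instead bypasses the universal inequality and controls $\ell(B^I(M_i))$ directly: using the Eisenbud operators it shows that $\bigoplus_{i}H_1(xu,L_0^I(M_i))$ sits inside a $*$-Artinian $A[t_1,\dots,t_c]$-module, so $i\mapsto\ell(B^I(M_i))$ is quasi-polynomial of the right degree.  The two arguments share the same $\rho^I\!\le r+1+\ell(B^I)$ core; the paper's version avoids invoking Theorem~\ref{main} as a black box (whose $e_2$ case in dimension~$\ge 2$ is in fact proved by the very same $B^I$ analysis).
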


Here is an overview of the contents of this paper. In section two we discuss many preliminary facts that we need. 
In section three we introduce a technique which is useful to prove our results.
In section four(five) we discuss the case when dimension of $A$ is one(two) respectively. The proof of Theorem  \ref{main} is divided in these two sections. The proof of Theorem \ref{main-depth} is in section five. In section six we prove Theorem \ref{main-depth-a}. Theorem \ref{main-dual} is proved in section seven. Finally in section eight we prove Theorem \ref{main-reg}.

\section{Preliminaries}
In this section we collect a few preliminary results that we need.

\s
\label{hilbcoeff}
 The \emph{Hilbert function} of $M$ with respect to $I$ is the function
\[
 H_{I}(M,n) =  \lambda(I^nM/I^{n+1}M)\quad \text{for all} \ n \geq 0.
\]
It is well known that the formal power series $\sum_{n \geq 0}H_{I}(M,n)z^n$
represents a rational function of a
special type:
\begin{equation*}
\sum_{n \geq 0}H_{I}(M,n) z^n = \frac{h_{I}(M ,z)}{(1-z)^{r}}\quad \text{where}
\ r = \dim M \ \text{and} \ h_{I}(M,z) \in\mathbb{Z}[z].
\end{equation*}
 It can be shown that
$e_{i}^{I}(M) = (h_I(M,z))^{(i)}(1)/i! $ for all $ 0 \leq i \leq r$.  It is convenient to set $e_{i}^{I}(M) = (h_I(M,z))^{(i)}(1)/i! $  even when $i \geq r$.
The number $e_{0}^{I}(M)$ is also called the \emph{multiplicity} of $M$ with respect
to $I$.

\textbf{I:} \textit{Superficial elements.}\\
For definition and basic properties of superficial sequences see \cite[p.\ 86–-87]{Pu1}.
The following result is well-known to experts. We give a proof due to lack of a reference.
\begin{lemma}\label{count-sup}
Let $(A,\m)$ be a \CM \ local ring of dimension $d \geq 1$ and let $I$ be an $\m$-primary ideal. Assume the residue field $k = A/\m$ is uncountable. Let $\{M_n \}_{n \geq 1}$ be a sequence of maximal \CM \ $A$-modules. Then there exists $\bx = x_1,\ldots,x_d \in I$ such that $\bx$ is an $M_n$-superficial sequence (\wrt \ $I$) for all $n \geq 1$.
\end{lemma}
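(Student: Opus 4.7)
The plan is to run induction on $d$, reducing everything to the \emph{countable avoidance} principle: over an uncountable field $k$, a nonzero finite-dimensional $k$-vector space cannot be covered by a countable family of proper $k$-subspaces. First I would fix a minimal system of generators $a_1,\ldots,a_m$ of $I$, so that $V := I/\m I$ is a nonzero finite-dimensional $k$-vector space with basis $\ov{a_1},\ldots,\ov{a_m}$. For any maximal \CM \ module $N$, the usual characterization of $N$-superficial elements via the associated primes of $G_I(N)$ translates into the statement that $x = \sum_i c_i a_i \in I$ is $N$-superficial precisely when the coordinate vector $(\ov{c_1},\ldots,\ov{c_m}) \in V \cong k^m$ lies outside a finite union $\mathcal{B}(N)$ of proper $k$-subspaces of $V$, one subspace per associated prime of $G_I(N)$ that fails to contain $G_I(A)_+$ (there are only finitely many such primes because $N$ is maximal \CM).

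Applying this to each $M_n$ yields a family $\{\mathcal{B}(M_n)\}_{n \geq 1}$ whose union is a countable collection of proper $k$-subspaces of $V$. By countable avoidance, and this is where uncountability of $k$ is essential, there exists $\ov{x_1} \in V$ lying outside the entire union, and any lift $x_1 \in I$ is simultaneously $M_n$-superficial for every $n \geq 1$. Since each $M_n$ is maximal \CM \ of dimension $d \geq 1$ and $I$ is $\m$-primary, it is standard that an $M_n$-superficial element is automatically $M_n$-regular (for instance, $(0:_{M_n} x_1) \subseteq \bigcap_j I^j M_n = 0$ by superficiality plus Krull's intersection theorem), so each quotient $M_n/x_1 M_n$ is maximal \CM \ over $A/(x_1)$ of dimension $d - 1$.

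The inductive step is then formal. The residue field of $A/(x_1)$ is still $k$, hence still uncountable, so applying the lemma to the triple $(A/(x_1),\ I/(x_1),\ \{M_n/x_1 M_n\}_{n \geq 1})$ with $d$ replaced by $d-1$ produces $x_2,\ldots,x_d \in I$ that, together with $x_1$, form the desired simultaneous superficial sequence. The main (indeed only) non-routine step is the countable avoidance in the second paragraph; it is here that the uncountability hypothesis on $k$ is used, and without it the argument breaks down since a countable union of proper subspaces of $V$ could well cover $V$ when $k$ is countable.
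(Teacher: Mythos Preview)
Your argument is essentially the same as the paper's: both characterize superficiality by avoiding the images in $V = I/\m I$ of the non-maximal associated primes of $G_I(M_n)$, invoke countable prime avoidance over the uncountable residue field to find a common $x_1$, and then induct on $d$. One small point the paper handles explicitly that you gloss over: it adjoins $M_0 = A$ to the family so that $x_1$ is also $A$-superficial (hence $A$-regular), which is what guarantees $A/(x_1)$ is \CM\ of dimension $d-1$ and makes the induction step legitimate; your argument only shows $x_1$ is $M_n$-regular, which does not by itself force $x_1 \notin \bigcup \Ass A$.
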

\begin{proof}
We recall the construction of a superficial element for a  finitely generated module $M$ of dimension $r \geq 1$ (note $M$ need not be \CM). Let $\M$ be the maximal ideal of $G_I(A)$. Let
\[
\Ass^*(G_I(M)) = \{ P \mid P \in \Ass(G_I(M)) \ \text{and} \ P \neq \M \}.
\]
Set $L = I/I^2$ and $V = L/\m L = I/ \m I$. For $P \in \Spec G_I(A)$ let $P_1 = P \cap L$ and
$\ov{P_1} = (P_1 + \m I)/\m I$.
 If $P \neq \M$ then $P_1 \neq L$.  By Nakayama's Lemma it follows easily that  $\ov{P_1} \neq V$. If $k$ is infinite then 
 $$ S(M)  = V \setminus \bigcup_{P \in \Ass^*(G_I(M))} \ov{P_1} \neq \emptyset. $$
 Then $u \in I$ such that $\ov{u} \in S(M)$ is an $M$-superficial element with respect to $I$.
 
 Now assume that $k$ is uncountable and let $\mathcal{F} = \{M_n \}_{n \geq 1}$ be a sequence of maximal \CM \ $A$-modules. Set $M_0 = A$. Then 
 $$ S(\mathcal{F})  = V \setminus \bigcup_{n \geq 0} \bigcup_{P \in \Ass^*(G_I(M_n))} \ov{P_1} \neq \emptyset. $$
 If $u \in I$ such that $\ov{u} \in S(\mathcal{F})$ then $\ov{u} \in S(M_n)$ for all $n \geq 1$. Thus $u$ is $M_n$-superficial element with respect to $I$ for all $n \geq 0$. Set $x_1 = u$. If $d \geq 2$ then note that $\{ M_n/x_1 M_n \}$ is a sequence of maximal \CM \  $A/(x_1)$ modules. So by induction the result follows.
\end{proof}

\s\textbf{ Associated graded module and Hilbert function mod a superficial element: }\label{mod-sup-h} \\
Let $x \in I$
be $M$-superficial.  Set $N = M/xM$. Let $r = \dim M$.  The following is well-known cf., \cite{Pu1}.
\begin{enumerate}[\rm(1)]
  \item Set $b_I(M,z) = \sum_{i \geq 0}\ell\big( (I^{n+1}M \colon_M x)/ I^nM \big)z^n$. Since
  $x$ is $M$-superficial we have $b_I(M,z) \in \Z[z]$.
  \item $h_I(M,z) = h_I(N,z) - (1-z)^rb_I(M,z)$; cf., \cite[Corollary 10 ]{Pu1}.
  \item So we have
  \begin{enumerate}[\rm(a)]
    \item $e_i(M) = e_i(N)$ for $i = 0,\ldots, r-1$.
    \item $e_r(M) = e_r(N) - (-1)^r b_I(M,1)$.
  \end{enumerate}
  \item
   The following are equivalent
   \begin{enumerate}[\rm(a)]
     \item $x^*$ is $G_I(M)$-regular.
     \item $G_I(N) = G_I(M)/x^*G_I(M)$
     \item $b_I(M,z) = 0$
     \item $e_r(M) = e_r(N)$.
   \end{enumerate}
   \item
   \emph{(Sally descent)} If $\depth G_I(N) > 0$ then $x^*$ is $G_I(M)$-regular.
\end{enumerate}

\textbf{II:}\textit{Base change.} 
\s\label{AtoA'}
 Let $\phi \colon (A,\m) \rt (A',\m')$ be a flat local ring homomorphism with $\m A' = \m'$. Set $I' = IA'$ and if
 $N$ is an $A$-module set $N' = N\otimes A'$.
 It can be easily seen that

\begin{enumerate}[\rm (1)]
\item
$\lambda_A(N) = \lambda_{A'}(N')$.
\item
$\projdim_A N = \projdim_{A'} N'$.
\item
 $H^I(M,n) = H^{I'}(M',n)$ for all $n \geq 0$.
\item
$\dim M = \dim M'$ and  $\grade(K,M) = \grade(KA',M')$ for any ideal $K$ of $A$.
\item
$\depth G_{I}(M) = \depth G_{I'}(M')$.
\end{enumerate}

 \noindent The specific base changes we do are the following:

(i) If $M$ is a maximal \CM \ $A$-module with finite GCI dimension then by definition there exists a flat homomorphism $(A,\m) \rt (B,\n)$ with $\m B = \n$ and $B = Q/(f_1,\ldots,f_c)$ for some local ring $Q$ and a $Q$-regular sequence $f_1,\ldots, f_c$ such that $\projdim_Q M\otimes_A B < \infty$. In this case we set $B = A^\prime$.

(ii) If $k \subseteq k'$ is an extension of fields then it is well-known that there exists a flat  
local ring homomorphism $(A,\m) \rt (A',\m')$ with $\m A' = \m'$ and $A'/\m' = k'$.
We use this construction when the residue field $k$ of $A$ is finite or countably infinite. We take $k'$ to be any uncountable field containing $k$.

(iii) We can also take $A'$ to be the completion of $A$.
\begin{remark}\label{base-change}
Let $(A,\m)$ be a \CM  \ local ring and let $M$ be a maximal \CM \ $A$-module with 
finite GCI dimension. After doing the base-changes (i), (ii) and (iii) above we may assume that $A = Q/(f_1,\ldots,f_c)$ for some \emph{complete} \CM \ local ring $Q$ and a $Q$-regular sequence $f_1,\ldots, f_c$ such that $\projdim_Q M < \infty$. Furthermore we may assume that 
the residue field of $A$ is uncountable.
\end{remark}

\textbf{III:}\textit{Quasi-polynomial functions of period 2}. \\
 Let us recall that a function $f \colon \Z_{\geq 0} \rt \Z_{\geq 0}$ is said to be of \textit{ quasi-polynomial type} with period $g \geq 1$ if there exists polynomials $P_0, P_1,\ldots, P_{g-1} \in \mathbb{Q}[X]$ such that $f(mg + i) = P_i(m)$ for all $m \gg 0$ and $i = 0, \cdots, g-1$.

We need the following  well-known result regarding quasi-polynomials of period two.

\begin{lemma}\label{g2}
Let $f \colon \Z_{\geq 0} \rt \Z_{\geq 0}$. The following are equivalent:
\begin{enumerate}[\rm (i)]
\item
$f$ is of quasi-polynomial type with period $2$.
\item
$\sum_{n\geq 0} f(n)z^n = \frac{h(z)}{(1-z^2)^c}$ for some  $h(z) \in \Z[z]$ and $c \geq 0$.
\end{enumerate}
Furthermore if $P_0, P_1 \in Q[X]$ are polynomials such that $f(2m + i) = P_i(m)$ for all $m \gg 0$ and $i = 0, 1$ then $\deg P_i \leq c-1$. \qed
\end{lemma}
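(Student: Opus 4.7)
The plan is to reduce to the classical one-variable generating function criterion by separating the even and odd subsequences. Define $f_0,f_1 \colon \Z_{\geq 0} \rt \Z_{\geq 0}$ by
\[
 f_0(m) = f(2m), \qquad f_1(m) = f(2m+1).
\]
By definition $f$ is of quasi-polynomial type with period $2$ if and only if each $f_i$ agrees, for $m \gg 0$, with a polynomial $P_i \in \mathbb{Q}[X]$. Recall the standard fact: a function $g \colon \Z_{\geq 0} \rt \Z$ eventually equals a polynomial of degree $\leq c-1$ if and only if its generating series is of the form $p(w)/(1-w)^c$ for some $p(w) \in \Z[w]$. I will use this freely.

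For (i) $\Rightarrow$ (ii), assume $f(2m+i) = P_i(m)$ for $m \gg 0$ and let $c = 1 + \max(\deg P_0, \deg P_1)$ (with $c = 0$ if both are identically $0$ for $m \gg 0$). Then there exist $p_0(w), p_1(w) \in \Z[w]$ with
\[
 \sum_{m \geq 0} f_i(m) w^m = \frac{p_i(w)}{(1-w)^c}, \qquad i = 0,1.
\]
Splitting $\sum_{n \geq 0} f(n) z^n$ into its even and odd parts and substituting $w = z^2$ gives
\[
 \sum_{n \geq 0} f(n) z^n = \frac{p_0(z^2)}{(1-z^2)^c} + z\cdot \frac{p_1(z^2)}{(1-z^2)^c} = \frac{p_0(z^2) + z\,p_1(z^2)}{(1-z^2)^c},
\]
so we may take $h(z) = p_0(z^2) + z\,p_1(z^2) \in \Z[z]$.

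For (ii) $\Rightarrow$ (i), write $h(z) = h_0(z^2) + z\,h_1(z^2)$ with $h_0, h_1 \in \Z[w]$ (separating $h$ into its even and odd parts). Then
\[
 \sum_{n \geq 0} f(n) z^n = \frac{h_0(z^2)}{(1-z^2)^c} + z\cdot\frac{h_1(z^2)}{(1-z^2)^c}.
\]
Comparing even and odd coefficients and substituting back $w = z^2$ yields
\[
 \sum_{m \geq 0} f(2m) w^m = \frac{h_0(w)}{(1-w)^c}, \qquad \sum_{m \geq 0} f(2m+1) w^m = \frac{h_1(w)}{(1-w)^c}.
\]
By the standard fact above, $f(2m)$ and $f(2m+1)$ each agree with a polynomial in $m$ for $m \gg 0$, so $f$ is of quasi-polynomial type with period $2$. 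Moreover, the same standard fact shows that the degrees of those polynomials are at most $c-1$, which proves the last assertion.

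The argument is essentially bookkeeping; the only mildly delicate point is the even/odd decomposition $h(z) = h_0(z^2)+z h_1(z^2)$ and keeping track of the denominators when passing between the variables $z$ and $w = z^2$. No step is really an obstacle.
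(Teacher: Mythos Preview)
Your argument is correct and is exactly the standard even/odd splitting one expects. Note that the paper itself does not prove this lemma: it is stated as well-known and closed with a \qed, so there is no proof in the paper to compare against; your write-up simply fills in the omitted details.
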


\textit{Convention:} We set the degree of the zero-polynomial to be $-\infty$. 

\begin{remark}\label{degree}
Let $f \colon \Z_{\geq 0} \rt \Z_{\geq 0}$ be of quasi-polynomial type with period $2$. Let $P_0, P_1 \in Q[X]$ be polynomials such that $f(2m + i) = P_i(m)$ for all $m \gg 0$ and $i = 0, 1$. Then set $\deg f = \max\{\deg P_0, \deg P_1 \}$. 
\end{remark}
As a consequence we get that

\begin{corollary}\label{g2-c}
Let $f \colon \Z_{\geq 0} \rt \Z_{\geq 0}$. If the function $g(n) = f(n) + f(n-1)$ is of quasi-polynomial type with period $2$ then so is $f$. Furthermore 
$\deg f = \deg g$.
\end{corollary}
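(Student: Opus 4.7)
My plan is to convert the recursion into a generating-function identity, invoke Lemma~\ref{g2} to obtain quasi-polynomiality of $f$, and then compare leading coefficients to match the degrees. Set $F(z) = \sum_{n \geq 0} f(n) z^n$ and $G(z) = \sum_{n \geq 0} g(n) z^n$. The relation $g(n) = f(n) + f(n-1)$ (valid for $n \geq 1$; any finite discrepancy at $n = 0$ can be absorbed into a polynomial correction, which does not affect the rational form dictated by Lemma~\ref{g2}) gives $G(z) = (1+z) F(z)$. By Lemma~\ref{g2}, I write $G(z) = h(z)/(1-z^2)^c$ with $h(z) \in \Z[z]$, and multiplying numerator and denominator of $F(z) = G(z)/(1+z)$ by $(1-z)$ produces $F(z) = h(z)(1-z)/(1-z^2)^{c+1}$. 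Lemma~\ref{g2} then yields that $f$ is of quasi-polynomial type with period $2$.

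For the equality of degrees, I would write $f(2m+i) = P_i(m)$ and $g(2m+i) = Q_i(m)$ for $m \gg 0$ and $i \in \{0,1\}$. Substituting into $g(n) = f(n) + f(n-1)$ gives, for large $m$,
$$ Q_0(m) = P_0(m) + P_1(m-1), \qquad Q_1(m) = P_0(m) + P_1(m). $$
Let $d = \max(\deg P_0, \deg P_1) = \deg f$ and let $a_0, a_1$ denote the coefficients of $m^d$ in $P_0$ and $P_1$ (taken to be $0$ when the actual degree is smaller). Non-negativity of $f$ forces each $P_i$ to have a non-negative leading coefficient, so $a_0, a_1 \geq 0$, and at least one is strictly positive since $d$ is achieved. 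The coefficient of $m^d$ in each of $Q_0$ and $Q_1$ is therefore $a_0 + a_1 > 0$, whence $\deg Q_0 = \deg Q_1 = d$ and $\deg g = d = \deg f$. The degenerate case $d = -\infty$ (where $f$ is eventually zero) is trivial.

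The only delicate point, and the main potential pitfall, is that a priori the leading coefficients might cancel when passing from $f$ to $g$. This is where the non-negativity hypothesis $f \colon \Z_{\geq 0} \rt \Z_{\geq 0}$ is essential: since the recursion $g = f + f(\cdot - 1)$ is a sum rather than a difference, non-negativity rules out any such cancellation.
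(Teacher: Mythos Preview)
Your proof is correct and follows essentially the same approach as the paper's sketch: both pass to generating functions and invoke Lemma~\ref{g2} for the quasi-polynomiality, and both establish $\deg f = \deg g$ by writing $Q_i$ in terms of $P_0,P_1$ and using non-negativity to preclude cancellation of leading terms. The only cosmetic difference is that you arrive at a denominator $(1-z^2)^{c+1}$ rather than the paper's $(1-z^2)^c$, which is immaterial since the degree equality is argued separately.
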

\begin{proof}[Sketch of a proof]
Set $u(z) = \sum_{n \geq 0}f(n)z^n$. Then by our hypothesis and Lemma \ref{g2} we have
\[
(1+z)u(z) + f(0)z = \frac{h(z)}{(1-z^2)^c},
\]
for some  $h(z) \in \Z[z]$ and $c \geq 0$. An easy computation now shows that
\[
u(z) = \frac{r(z)}{(1-z^2)^c},
\]
for some  $r(z) \in \Z[z]$. Thus $f$ is of quasi-polynomial type with period two.

As $f(n) \leq g(n)$ we clearly get $\deg f  \leq \deg g$. As $g(2n) = f(2n) + f(2n-1)$ and $g(2n + 1) = f(2n+1) + f(2n)$ we get that $\deg g \leq \deg f$.
\end{proof}
The following result is also well-known:
\begin{lemma}\label{g2-growth}
Let $f \colon \Z_{\geq 0} \times \Z_{\geq 0} \rt \Z_{\geq 0}$ be a function such that
\[
\sum_{m,n\geq 0} f(m,n)z^mw^n = \frac{h(z,w)}{(1-z^2)^c(1-w)^{d}} 
\]
Then for $m,n \gg 0$ we have
\[
f(m,n) = \sum_{i = 0}^{d-1}(-1)^ie_i(m)\binom{n+d-1-i}{d-1-i},  
\]
where the functions $m \mapsto e_i(m)$ for $i = 0,\ldots, d-1$ are of  quasi-polynomial type with  period $2$ and degree $\leq c-1$. \qed
\end{lemma}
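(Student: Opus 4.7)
The plan is to compute the coefficient of $w^n$ in the given rational function by performing a partial-fraction decomposition in $w$, treating $z$ as a parameter, and then read off the $z$-generating function of each $e_k$ to apply Lemma \ref{g2}.

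First, since $(1-w)^d$ has a unit leading coefficient in $w$, polynomial division of $h(z,w)$ by $(1-w)^d$ over $\mathbb{Z}[z][w]$ gives $h(z,w) = P(z,w)(1-w)^d + R(z,w)$ with $P, R \in \mathbb{Z}[z,w]$ and $\deg_w R < d$. Substituting $w = 1 - S$ into $R$ and re-expanding in $S$ yields $R(z,w) = \sum_{k=0}^{d-1} b_k(z)(1-w)^k$ with $b_k(z) \in \mathbb{Z}[z]$, so
$$\frac{h(z,w)}{(1-z^2)^c(1-w)^d} = \frac{P(z,w)}{(1-z^2)^c} + \sum_{k=0}^{d-1}\frac{b_k(z)}{(1-z^2)^c(1-w)^{d-k}}.$$

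Next, for $n$ larger than $\deg_w P$ the first term contributes nothing to $[w^n]$. Using the expansion $(1-w)^{-(d-k)} = \sum_{j\geq 0}\binom{j+d-k-1}{d-k-1}w^j$ and extracting $[w^n]$ followed by $[z^m]$ gives
$$f(m,n) = \sum_{k=0}^{d-1}\binom{n+d-k-1}{d-k-1}\,[z^m]\frac{b_k(z)}{(1-z^2)^c}$$
for all sufficiently large $n$ and every $m \geq 0$. Defining $e_k(m) = (-1)^k [z^m]\bigl(b_k(z)/(1-z^2)^c\bigr)$ rewrites this in exactly the claimed form.

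Finally, for each $k$ the series $\sum_{m\geq 0}e_k(m)z^m = (-1)^k b_k(z)/(1-z^2)^c$ has precisely the shape appearing in Lemma \ref{g2}(ii), so $m \mapsto e_k(m)$ is of quasi-polynomial type with period $2$ and degree $\leq c-1$ by that lemma, completing the proof. The argument is essentially mechanical; the only point requiring attention is to carry out the partial-fraction decomposition over $\mathbb{Z}[z]$ (using the unit leading coefficient of $(1-w)^d$ in $w$) rather than over $\mathbb{Q}(z)$, so that the numerators $b_k(z)$ end up as genuine polynomials in $z$ and Lemma \ref{g2} applies directly.
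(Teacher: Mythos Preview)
Your argument is correct. The paper itself supplies no proof of this lemma: it is introduced as ``well-known'' and the statement is immediately followed by \qed. Your partial-fraction decomposition in $w$ over $\mathbb{Z}[z]$ (using that $(1-w)^d$ has unit leading coefficient in $w$), followed by the change of variable $w \mapsto 1-S$ to extract the $b_k(z)$, is the standard and complete way to establish such a result.

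One minor remark: when you invoke Lemma~\ref{g2} at the end to conclude that each $m \mapsto e_k(m)$ is of quasi-polynomial type with period $2$ and degree $\leq c-1$, note that Lemma~\ref{g2} is stated for functions $f \colon \Z_{\geq 0} \to \Z_{\geq 0}$, whereas your $e_k$ need not take nonnegative values. This is harmless, since the implication (ii)$\Rightarrow$(i) in Lemma~\ref{g2} does not actually use nonnegativity---the coefficients of any series $h(z)/(1-z^2)^c$ with $h \in \Z[z]$ are quasi-polynomial of period $2$ and degree $\leq c-1$ regardless of sign---but strictly speaking it deserves a one-line remark.
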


\textbf{IV:} \textit{Eisenbud operators}\\
Let $Q$ be a Noetherian ring  and let $\mathbf{f}= f_1,\ldots f_c$ be a regular sequence in $Q$.  Set $A = Q/(\mathbf{f})$.  Let $M$ be a finitely generated $A$-module with $\projdim_Q M$ finite. 
\s Let $\mathbb{F}:  \cdots F_n \rt \cdots F_1 \rt F_0 \rt 0$
 be a free resolution of $M$ as a $A$-module.

 Let $t_1,\ldots t_c \colon \mathbb{F}(+2)  \rt \mathbb{F} $ be the \emph{Eisenbud-operators}; see
 \cite[section 1.]{Eisenbud-80}. Then
 \begin{enumerate}
   \item $t_i$ are uniquely determined up to homotopy.
   \item $t_i, t_j$ commute up to homotopy.
 \end{enumerate}
 Let $T = A[t_1,\ldots,t_c]$ be a polynomial ring over $A$ with variables $t_1,\ldots,t_c$ of degree $2$.
Let $D$ be an $A$-module. The operators $t_j$ give well-defined maps
\[
t_j \colon \Ext^{i}_{A}(M,D) \rightarrow \Ext^{i+2}_{R}(M,D) \quad \ \text{for} \ 1 \leq j \leq c  \ \text{and all} \  i,
\]
\[
t_j \colon \Tor^{i+2}_{A}(M,D) \rightarrow \Tor^{i}_{R}(M,D) \quad \ \text{for} \ 1 \leq j \leq c  \ \text{and all} \  i.
\]
This turns $\Ext_A^*(M,D) = \bigoplus_{i \geq 0} \Ext^i_A(M,D)$ and $\Tor_A^*(M,D) = \bigoplus_{i \geq 0} \Tor^i_A(M,D)$  into  modules over $T$ (here we give an element $t \in \Tor^i_A(M,D)$ degree $-i$). Furthermore these structure depend only on $\bF$, are natural in both module arguments and commute with the connecting maps induced by short exact sequences.

\s 
Gulliksen, \cite[3.1]{Gulliksen},  proved that if $\projdim_Q M$ is finite then
$\Ext_A^*(M,D) $ is a finitely generated $T$-module. 
If $A$ is local and  $D = k$, the residue field of $A$, Avramov in \cite[3.10]{LLAV} proved a converse; i.e., if
$\Ext_A^*(M,k)$ is a finitely generated $T$-module then $\projdim_Q M$ is finite. For a more general result, see \cite[4.2]{AGP}.

\begin{definition}(with notation as above:) Assume $A$ is local with residue field $k$.
 Set $\cx M = \dim_T \Ext_A^*(M,k)$, the \emph{complexity} of $M$.
\end{definition}

We need the following result regarding the growth of lengths of certain Tor's. Recall a graded module $X$ over $T = A[t_1,\ldots, t_c]$ is said to be *-Artinian $T$-module if  every descending chain of
graded submodules of $X$ terminates.

\begin{proposition}\label{growth}
Let $(Q,\n)$ be a complete Noetherian ring  and let $\mathbf{f}= f_1,\ldots f_c$ be a regular sequence in $Q$.  Set $A = Q/(\mathbf{f})$ and $\m = \n/(\bF)$.  Let $M$ be a finitely generated $A$-module with $\projdim_Q M$ finite. Let $D$ be a non-zero $A$-module of finite length.  Let $\Ext_A^*(M,D) $ be a finitely generated $T = A[t_1,\ldots, t_c]$-module as above.  Then
\begin{enumerate}[\rm (1)]
\item
$\dim_T \Ext_A^*(M,D) \leq \cx(M)$.
\item
The function $n \mapsto \ell( \Ext^n_A(M, D))$ is of  quasi-polynomial type with  period $2$ and degree $\leq \cx(M)-1$. 
\item
$\Tor_A^*(M,D)$ is a *-Artinian $T$-module. Here $t \in \Tor^A_n(M, D)$ has degree $-n$.
\item
The function $n \mapsto \ell( \Tor^n_A(M, D))$ is of  quasi-polynomial type with  period $2$ and degree $\leq \cx(M)-1$. 
\end{enumerate}
\end{proposition}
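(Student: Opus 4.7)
The plan is to reduce (1) to the case $D = k$ by induction on $\ell(D)$, to deduce (2) from (1) via standard Hilbert theory for finitely generated graded modules over a polynomial ring, and to obtain (3) and (4) from graded Matlis duality by converting the $\Tor$-statements into $\Ext$-statements for the Matlis dual $D^\vee$. For the setup, since $D$ has finite length there is $t \geq 1$ with $\m^t D = 0$; computing $\Ext$ and $\Tor$ from a free resolution of $M$ shows that $\m^t$ annihilates each $\Ext^n_A(M, D)$ and each $\Tor^A_n(M, D)$, so all of these are finitely generated $A/\m^t$-modules, and in particular the length functions in (2) and (4) are well-defined. For (1) itself the base case $D = k$ is the definition of $\cx(M)$; for the induction step I would choose a short exact sequence $0 \to D' \to D \to k \to 0$ with $\ell(D') = \ell(D) - 1$ and look at the induced long exact sequence for $\Ext_A^*(M, -)$. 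By the naturality of the Eisenbud operators (noted in the preliminaries), this is a long exact sequence of graded $T$-modules, and it exhibits $\Ext_A^*(M, D)$ as an extension of a graded $T$-submodule of $\Ext_A^*(M, k)$ by a graded $T$-quotient of $\Ext_A^*(M, D')$. Since $\dim_T$ is subadditive in short exact sequences of graded $T$-modules, induction gives $\dim_T \Ext_A^*(M, D) \leq \dim_T \Ext_A^*(M, k) = \cx(M)$.

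For (2), set $E = \Ext_A^*(M, D)$. Because $T$ is concentrated in even degrees, the decomposition $E = E^{\mathrm{even}} \oplus E^{\mathrm{odd}}$ is into graded $T$-submodules. Regrading each summand by halving degrees turns it into a finitely generated graded module over the standard graded polynomial ring $A[s_1, \ldots, s_c]$ with $\deg s_i = 1$, and its annihilator contains $\m^t$, so one is effectively working over the standard graded polynomial ring $(A/\m^t)[s_1, \ldots, s_c]$ with Artinian base. The classical Hilbert polynomial theorem then produces polynomials $P_0, P_1 \in \mathbb{Q}[X]$ with $\ell(\Ext^{2m+i}_A(M, D)) = P_i(m)$ for $m \gg 0$ and $i \in \{0, 1\}$, of degrees bounded by $\dim_T E - 1 \leq \cx(M) - 1$ by (1).

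For (3) and (4) I would use graded Matlis duality. Let $(-)^\vee = \Hom_A(-, E_A(k))$; since $A$ is complete and $\ell(D) < \infty$, $D^\vee$ again has finite length with $\ell(D^\vee) = \ell(D)$. Hom--tensor adjunction applied to a free resolution of $M$ yields, in each degree $n$, an isomorphism
\[
\Hom_A(\Tor^A_n(M, D), E_A(k)) \cong \Ext^n_A(M, D^\vee),
\]
and these should assemble into an isomorphism of graded $T$-modules $\Tor_A^*(M, D)^\vee \cong \Ext_A^*(M, D^\vee)$, using the grading convention that $\Tor^A_n$ sits in degree $-n$ so its Matlis dual sits in degree $n$. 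The right-hand side is a finitely generated graded $T$-module by Gulliksen's theorem applied to $D^\vee$, and since $T$ is graded Noetherian, any descending chain of graded $T$-submodules of $\Tor_A^*(M, D)$ corresponds under graded duality to an ascending chain of graded $T$-quotients of $\Ext_A^*(M, D^\vee)$, which must terminate; this proves (3). For (4), taking lengths termwise gives $\ell(\Tor^A_n(M, D)) = \ell(\Ext^n_A(M, D^\vee))$, so applying (2) to $D^\vee$ finishes the proof with the same degree bound. The step I expect to require the most care is checking that the Eisenbud operators $t_i$ intertwine this Matlis duality, so that the displayed isomorphism is genuinely $T$-linear; this should reduce to tracing the chain-level construction of $t_i$ through the adjunction, but is delicate because each $t_i$ is only defined up to homotopy and one must track signs carefully.
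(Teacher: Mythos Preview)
Your argument is correct and structurally matches the paper's: parts (2)--(4) proceed exactly as the paper does, with (2) read off from (1) via Hilbert theory and (3), (4) obtained from the Matlis-dual isomorphism $\Tor^A_n(M,D)^\vee \cong \Ext^n_A(M,D^\vee)$ together with Gulliksen's finiteness. The only substantive difference is in (1): the paper simply cites \cite[Theorem~5.3]{AGP}, whereas you give a self-contained induction on $\ell(D)$, filtering $D$ by copies of $k$ and using the long exact sequence in $\Ext$ together with subadditivity of Krull dimension. Your route is more elementary and avoids the support-variety machinery underlying the cited result. Your caution about $T$-linearity of the Matlis-dual isomorphism is well-placed; the paper does not check this explicitly either, relying instead on the naturality of the Eisenbud operators in the second module argument recorded in the preliminaries, which is enough since $(-)^\vee = \Hom_A(-,E)$ is functorial.
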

\begin{proof}
(1) This follows from \cite[Theorem 5.3]{AGP}.

(2) This easily follows from (1).

For (3), (4) we use the following result. Let $E$ be the injective hull of $k$. Then
we have
\[
\Hom_A(\Tor^A_n(M, D), E) \cong \Ext^n_A(M, \Hom_A(D, E)). \tag{$\dagger$}
\]

(3) From $(\dagger)$ it follows that the Matlis dual of 
$\Tor_A^*(M,D)$ is  \\ $\Ext^*_A(M, \Hom_A(D, E))$. By Gulliksen's result we have that 
$\Ext^*_A(M, \Hom_A(D, E))$ is a finitely generated graded $T$-module. So by Matlis-duality \cite[3.6.17]{BH} we get that \\ $\Tor_A^*(M,D)$ is a *-Artinian $T$-module.

(4) This follows from (2) and $(\dagger)$. 
\end{proof}
As an immediate consequence we obtain:
\begin{corollary}\label{mult}
Let $(A,\m)$ be a \CM \ local ring of dimension $d$ and let $I$ be an $\m$-primary ideal. Let $M$ be a maximal \CM \ $A$-module.  If $M$ has finite GCI dimension over $A$ then the function
$i \mapsto e_0^I(\Syz^A_i(M))$ is of quasi-polynomial type with period $2$ and degree $= \cx(M) - 1$.
\end{corollary}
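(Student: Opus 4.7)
The plan is to compute $e_0^I(\Syz^A_i(M))$ as the colength $\ell(\Syz^A_i(M)/\bx\Syz^A_i(M))$ for a carefully chosen sequence $\bx$ that works uniformly across all syzygies, and then to express this colength via a short exact sequence in terms of Tor-modules whose growth is controlled by Proposition \ref{growth}.

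First I would invoke Remark \ref{base-change} to assume that $A = Q/(f_1,\ldots,f_c)$ for a complete \CM \ local ring $Q$ and a $Q$-regular sequence $f_1,\ldots,f_c$, that $\projdim_Q M < \infty$, and that the residue field of $A$ is uncountable. The family $\{A\} \cup \{\Syz^A_i(M)\}_{i \geq 1}$ is a countable collection of maximal \CM \ $A$-modules, so Lemma \ref{count-sup} produces $\bx = x_1,\ldots,x_d \in I$ that is simultaneously an $A$- and $\Syz^A_i(M)$-superficial sequence for every $i$. Since each syzygy is maximal \CM, $\bx$ is a regular sequence on it, and since $\bx$ is a superficial system of parameters for the \CM \ ring $A$, it generates a minimal reduction of $I$ in $A$; as a reduction in $A$ is automatically a reduction on every finitely generated $A$-module, we obtain
\[
e_0^I(\Syz^A_i(M)) = e_0^{(\bx)}(\Syz^A_i(M)) = \ell\bigl(\Syz^A_i(M)/\bx\Syz^A_i(M)\bigr).
\]

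Next, set $K = A/(\bx)$ (of finite length) and $\Omega_i = \Syz^A_i(M)$, and let $F_\bullet$ be the minimal $A$-free resolution of $M$ with Betti numbers $\beta_i = \rank F_i = \ell(\Tor^A_i(M,k))$. Tensoring $0 \to \Omega_i \to F_{i-1} \to \Omega_{i-1} \to 0$ with $K$, using $\Tor^A_1(F_{i-1},K)=0$ and the dimension-shift $\Tor^A_1(\Omega_{i-1},K) \cong \Tor^A_i(M,K)$, yields the four-term exact sequence
\[
0 \to \Tor^A_i(M,K) \to \Omega_i \otimes_A K \to F_{i-1} \otimes_A K \to \Omega_{i-1} \otimes_A K \to 0.
\]
Taking lengths gives the key recursion
\[
e_0^I(\Omega_i) + e_0^I(\Omega_{i-1}) = \ell\bigl(\Tor^A_i(M,K)\bigr) + \beta_{i-1}\cdot\ell(K) \qquad (i \geq 1).
\]

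Finally I would apply Proposition \ref{growth} twice, with $D = K$ and with $D = k$, to conclude that both $i \mapsto \ell(\Tor^A_i(M,K))$ and $i \mapsto \beta_i$ are of quasi-polynomial type with period $2$ and degree $\leq \cx(M) - 1$; by the very definition of complexity the latter has degree exactly $\cx(M) - 1$. Consequently the right-hand side of the recursion is a period-$2$ quasi-polynomial of degree exactly $\cx(M) - 1$, since both summands are non-negative and the Betti-number summand achieves this degree on at least one parity branch. Applying Corollary \ref{g2-c} to $f(i) = e_0^I(\Syz^A_i(M))$ then yields the conclusion. The main obstacle, as I see it, is ensuring that $\deg f$ equals rather than merely bounds $\cx(M)-1$; this is the non-cancellation observation in the last step, and relies on the non-negativity of both summands on the right side of the recursion. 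A secondary bookkeeping point is verifying that the sequence produced by Lemma \ref{count-sup} can be taken to generate a minimal reduction of $I$, but this is standard: in a \CM \ local ring with infinite residue field, every superficial system of parameters does so.
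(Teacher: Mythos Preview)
Your proof is correct and follows the same strategy as the paper: extract a recursion for $e_0^I(\Omega_i)+e_0^I(\Omega_{i-1})$ from the syzygy short exact sequence and then apply Corollary~\ref{g2-c}. The only difference is that the paper invokes additivity of multiplicity directly to get $e_0^I(M_i)+e_0^I(M_{i+1})=(\rank F_i)\,e_0^I(A)$, whereas your passage through colengths introduces the term $\ell(\Tor^A_i(M,K))$---which in fact vanishes, since $\bx$ is a regular sequence on the maximal \CM\ module $M$ and hence $\Tor^A_i(M,A/(\bx))=H_i(\bx;M)=0$ for $i\geq 1$; once you observe this, your recursion collapses to the paper's and the appeal to Proposition~\ref{growth} for $D=K$ and the non-cancellation argument become unnecessary.
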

\begin{proof}
By Remark \ref{base-change} we may assume $A = Q/(\mathbf{f})$ where $Q$ is complete with infinite residue field, $\mathbf{f} = f_1,\cdots, f_c$ is a $Q$-regular sequence and 
$\projdim_Q M$ is finite.  Let $\mathbb{F}$ be a minimal resolution of $M$. 
Then $ \rank F_i = \ell(\Tor^A_i(M,k))$. So by \ref{growth} we get that the function $i \mapsto \rank F_i$ is quasi-polynomial of period two.
Set $M_i = \Syz^A_i(M)$.  The exact sequence
$0 \rt M_{i+1} \rt F_i \rt M_i \rt 0$ yields 
$e_0^I(M_i) + e^I_0(M_{i+1}) =  (\rank F_i) e_0^I(A)$. The result
now follows from \ref{g2-c}.
\end{proof}

We also prove:
\begin{proposition}\label{hilbSyz}
Let $(A,\m)$ be a \CM \ local ring of dimension $d$ and let $I$ be an $\m$-primary ideal. Let $M$ be a maximal \CM \ $A$-module. Assme $M$ has finite GCI dimension over $A$.
Fix $n \geq 0$. Then the function 
\[
i \mapsto \ell\left(\frac{I^n\Syz^{i}_A(M)}{I^{n+1}\Syz^{i}_A(M)}\right)
\]
is of quasi-polynomial type with period two and degree $\leq \cx(M)-1$.
\end{proposition}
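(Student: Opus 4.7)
The plan is to reduce to the complete-intersection setting and then extract the result from length additivity in a syzygy short exact sequence, exactly in the spirit of Corollary \ref{mult}. By Remark \ref{base-change} we may assume $A = Q/(\mathbf{f})$ with $Q$ a complete \CM \ local ring, $\mathbf{f} = f_1,\ldots,f_c$ a $Q$-regular sequence, and $\projdim_Q M < \infty$; flat base change preserves lengths and commutes with syzygies, so the claimed conclusion is unaffected. Writing $M_i = \Syz_i^A(M)$ and
\[
\phi_n(i) = \ell(M_i/I^{n+1}M_i),
\]
we have $\ell(I^n M_i/I^{n+1}M_i) = \phi_n(i) - \phi_{n-1}(i)$ (with the convention $\phi_{-1}(i) := 0$), so it suffices to prove that $i \mapsto \phi_n(i)$ is of quasi-polynomial type with period two and degree $\leq \cx(M)-1$ for each fixed $n \geq -1$.

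To this end, I would fix a minimal free resolution $\mathbb{F}$ of $M$ and apply $- \otimes_A A/I^{n+1}$ to the short exact sequence $0 \to M_{i+1} \to F_i \to M_i \to 0$. Since $F_i$ is free, $\Tor_1^A(F_i, A/I^{n+1}) = 0$, and $\Tor_1^A(M_i, A/I^{n+1}) \cong \Tor_{i+1}^A(M, A/I^{n+1})$, giving the four-term exact sequence
\[
0 \to \Tor_{i+1}^A(M, A/I^{n+1}) \to M_{i+1}/I^{n+1}M_{i+1} \to F_i/I^{n+1}F_i \to M_i/I^{n+1}M_i \to 0.
\]
Noting that $\ell(F_i/I^{n+1}F_i) = \rank(F_i)\,\ell(A/I^{n+1}) = \ell(\Tor_i^A(M,k))\,\ell(A/I^{n+1})$ and taking alternating lengths, I obtain the key recursion
\[
\phi_n(i) + \phi_n(i+1) = \ell(\Tor_i^A(M,k))\,\ell(A/I^{n+1}) + \ell(\Tor_{i+1}^A(M, A/I^{n+1})).
\]

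Now I would invoke Proposition \ref{growth}(4) with $D = k$ and with $D = A/I^{n+1}$ (both nonzero of finite length, since $I$ is $\m$-primary). Each summand on the right is then quasi-polynomial in $i$ of period two and degree $\leq \cx(M)-1$; the constant factor $\ell(A/I^{n+1})$ and the index shift in $\Tor_{i+1}^A$ preserve these properties, so their sum is too. After rewriting the recursion as $g(i+1) = \phi_n(i) + \phi_n(i+1)$ and shifting the index, Corollary \ref{g2-c} applied to $f = \phi_n$ forces $\phi_n$ itself to be quasi-polynomial of period two and degree $\leq \cx(M)-1$. Taking the difference $\phi_n - \phi_{n-1}$ yields the stated claim for $\ell(I^n M_i/I^{n+1}M_i)$. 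The main (modest) obstacle is the bookkeeping for the base change, so that Proposition \ref{growth} is legitimately available; the algebraic content reduces to one application each of length additivity, Proposition \ref{growth}(4), and Corollary \ref{g2-c}.
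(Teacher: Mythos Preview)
Your proof is correct and essentially identical to the paper's: both reduce via Remark~\ref{base-change}, tensor the syzygy short exact sequence with $A/I^{n+1}$, invoke Proposition~\ref{growth} for the Tor-lengths, and finish with Corollary~\ref{g2-c}. The only cosmetic difference is that the paper subtracts the exact sequences for $A/I^{n+1}$ and $A/I^n$ first to get a recursion directly for $\ell(I^nM_i/I^{n+1}M_i)+\ell(I^nM_{i+1}/I^{n+1}M_{i+1})$, whereas you prove $\phi_n$ is quasi-polynomial and then take $\phi_n-\phi_{n-1}$; the content is the same.
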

\begin {proof}
By Remark \ref{base-change} we may assume $A = Q/(\mathbf{f})$ where $Q$ is complete with infinite residue field, $\mathbf{f} = f_1,\cdots, f_c$ is a $Q$-regular sequence and 
$\projdim_Q M$ is finite.  Let $\mathbb{F}$ be a minimal resolution of $M$. Set $M_i = \Syz^A_i(M)$.  Fix $n \geq 0$. The exact sequence
$0 \rt M_{i+1} \rt F_i \rt M_i \rt 0$ yields an exact sequence
\[
0 \rt \Tor^A_{i+1}(M, A/I^{n+1}) \rt M_{i+1}/I^{n+1}M_{i+1} \rt F_i/I^{n+1}F_i \rt M_i/I^{n+1}M_i \rt 0.
\]
Similarly we have an exact sequence
\[
0 \rt \Tor^A_{i+1}(M, A/I^{n}) \rt M_{i+1}/I^{n}M_{i+1} \rt F_i/I^{n}F_i \rt M_i/I^{n}M_i \rt 0.
\]
Computing lengths we have
\begin{align*}
\ell\left(\frac{I^nM_i}{I^{n+1}M_i}\right) + \ell\left(\frac{I^nM_{i+1}}{I^{n+1}M_{i+1}}\right)  &= (\rank{F_i})\ell\left(\frac{I^n}{I^{n+1}}\right) + \ell\left(\Tor^A_{i+1}(M, A/I^{n+1}) \right)  \\
&\  \   \  - \ell \left( \Tor^A_{i+1}(M, A/I^n) \right).
\end{align*}
By \ref{growth} the terms in the right hand side are of quasi-polynomial type with period $2$ and degree $\leq \cx(M)-1$. The result follows from 
Corollary \ref{g2-c}.
\end{proof}
As a consequence of the above Proposition we get:
\begin{corollary}\label{zero-dim}
Let $(A,\m)$ be an Artin local ring and let $I$ be an $\m$-primary ideal. Let $M$ be a finitely generated $A$-module of finite GCI-dimension. Then for all $j \geq 0$ the function
$i \mapsto e_j^I(\Syz^A_{i}(M))$ is of quasi-polynomial type with period $2$ and degree $\leq \cx(M)-1$.
\end{corollary}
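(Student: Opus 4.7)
The plan is to reduce $e_j^I(\Syz^A_i(M))$ to a finite $\Z$-linear combination of Hilbert-function values of the form $\ell(I^n \Syz^A_i(M)/I^{n+1}\Syz^A_i(M))$, and then apply Proposition~\ref{hilbSyz} termwise. The key feature of the Artinian setting that makes this work is that $I$ is nilpotent, which gives a truncation of the Hilbert series with a degree bound uniform in $i$.

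Concretely, I would first pick $N$ with $\m^N = 0$, so that $I^N = 0$ too since $I$ is $\m$-primary. Setting $M_i := \Syz^A_i(M)$, this forces $I^n M_i/I^{n+1} M_i = 0$ for every $i$ whenever $n \geq N$. Because $\dim M_i = 0$, the Hilbert series $\sum_{n \geq 0} H_I(M_i,n)z^n$ coincides with its numerator $h_I(M_i,z)$, which is therefore a polynomial in $z$ of degree at most $N-1$, with the bound $N-1$ independent of $i$.

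Next, taking the $j$-th derivative at $z=1$ in the definition of $e_j^I$ from \ref{hilbcoeff} yields
\[
e_j^I(M_i) \;=\; \sum_{n = j}^{N-1} \binom{n}{j}\, \ell\!\left(\frac{I^n M_i}{I^{n+1} M_i}\right),
\]
so that $i \mapsto e_j^I(\Syz^A_i(M))$ is a fixed $\Z$-linear combination of finitely many functions $i \mapsto \ell(I^n M_i/I^{n+1} M_i)$, one for each $n$ with $j \leq n \leq N-1$. By Proposition~\ref{hilbSyz} each such function is of quasi-polynomial type with period $2$ and degree $\leq \cx(M)-1$, and this class is clearly closed under integer linear combinations (immediate from Lemma~\ref{g2} or from the definition via the pair $P_0, P_1$); hence the same holds for $i \mapsto e_j^I(\Syz^A_i(M))$.

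I do not anticipate a serious obstacle; the only subtlety worth emphasising is the uniformity of the cut-off $N$ with respect to $i$, which is what makes the truncation into a finite sum permissible. Once that point is in place, the result follows formally from Proposition~\ref{hilbSyz} and the closure of period-$2$ quasi-polynomial-type functions under $\Z$-linear combinations.
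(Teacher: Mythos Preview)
Your proof is correct and is precisely the explicit version of the paper's one-line argument: the paper simply notes that $I^r = 0$ for some $r \geq 1$ and says the result follows from Proposition~\ref{hilbSyz}, which is exactly what you have unpacked via the formula $e_j^I(M_i) = \sum_{n = j}^{N-1} \binom{n}{j}\,\ell(I^n M_i/I^{n+1} M_i)$.
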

\begin{proof}
We note that $I^r = 0$ for some $r \geq 1$. The result now follows from Corollary \ref{hilbSyz}.
\end{proof}

\s\label{Eis-reg} \emph{Eisenbud operators modulo a regular element.} \\
Let $x $ be $A \oplus M$-regular. Let $y \in Q$ be a pre-image of $x$ in $Q$. As permutation of regular sequences is also a regular sequence  we also get that $y$ is $Q$-regular.
We now note that $\projdim_{Q/(y)}(M/xM)$ is finite. Let $\mathbb{F}$ be a minimal resolution of $M$ over $A$. Then $\mathbb{F}/x\mathbb{F}$ is a minimal resolution of $M/xM$ over $A/(x)$. Let $t_1,\ldots, t_c$ be the Eisenbud operators  over $\mathbb{F}$.  By the construction of Eisenbud operators it follows that $t_1,\ldots,t_c$ induce operators $t_1^*,\ldots, t_c^*$ over $\mathbb{F}/x\mathbb{F}$.  Let $N$ be an $A/(x)$-module.  Set $E =  \Tor^A_*(M, N) = \Tor^{A/(x)}_*(M/xM, N)$. Then the action of $t_i$ on $E$ is same as that of $t_i^*$. 

\s  \label{bigraded} Now assume that $Q$ be a Noetherian ring  and let $\mathbf{f}= f_1,\ldots f_c$ be a regular sequence in $Q$.  Set $A = Q/(\mathbf{f})$.  Let $M$ be a finitely generated $A$-module with $\projdim_Q M$ finite. 
Let $\mathbb{F}$ be a minimal resolution of $M$.  Let $t_1,\cdots, t_c \colon \mathbb{F}(+2) \rt \mathbb{F}$ be the Eisenbud operators.
Let  $I$ be an $\m$-primary ideal of $A$ and let $\R = A[Iu]$ be the Rees algebra of $A$ \wrt \ $I$.  Let $N = \bigoplus_{n \geq 0} N_n$ be an $\R$-module (not necessarily finitely generated). We claim that $\bigoplus_{i \geq 0} \Tor^A_i(M, N) = \bigoplus_{i, n \geq 0} \Tor^A_i(M, N_n)$ has a bigraded $\R[t_1,\ldots,t_c]$-module structure. Here $t_i $ have degree $(0,2)$ and if $a \in I^n$ then $\deg au^n = (n, 0)$.  To see this let $v = au^s \in \R_s$. 
The map $N_n \xrightarrow{v} N_{n+s}$
yields  the following commutative diagram
\[
\xymatrix
{
\mathbb{F}\otimes N_n \ar@{->}[d]^{v}\ar@{->}[r]_{t_j}
&\mathbb{F}\otimes N_n(-2) \ar@{->}[d]^{v}
\\
\mathbb{F}\otimes N_{n+s} \ar@{->}[r]_{t_j}
&\mathbb{F}\otimes N_{n+s}(-2).
}
\]

Taking homology we get the required result. 
An analogous argument yields that  $\bigoplus_{i ,n\geq 0} \Ext^i_A(M, N_n)$ is a bigraded $\R[t_1,\ldots, t_c]$-module.

\section{$L^I_i(M)$}
In this section we extend and simplify a technique from \cite{PuMCM} and \cite{Pu2}.
Let $A$ be a Noetherian ring, $I$ an $\m$-primary ideal and $M$ a finitely generated 
$A$-module.

\s Set $L^I_0(M) = \bigoplus_{n \geq 0} M/I^{n+1}M$. Let $\R = A[Iu]$ be the \emph{Rees-algebra} of $I$. Let $\Sc = A[u]$. Then $\R$ is a subring of $\Sc$. Set
$M[u] = M\otimes_A \Sc$ an $\Sc$-module and so an $\R$-module. Let $\R(M) = \bigoplus_{n \geq 0}I^nM$ be the Rees-module of $M$ with respect to $I$. We have the following exact sequence of $\R$-modules
\[
0 \rt \R(M) \rt M[u] \rt L^I_0(M)(-1) \rt 0.
\]
Thus $L^I_0(M)(-1)$ (and so $L^I_0(M)$) is a $\R$-module. We note that $L^I_0(M)$ is \emph{not} a finitely generated $\R$-module. Also note that $L^I_0(M) = M \otimes_A L^I_0(A)$.

\s For $i \geq 1$ set 
$$L^I_i(M) = \Tor^A_i(M, L^I_0(A)) = \bigoplus_{n \geq 0 } \Tor^A_i(M, A/I^{n+1}). $$
We assert that $L^I_i(M)$ is a finitely generated $\R$-module for $i \geq 1$. It is sufficient to prove it for $i = 1$. We tensor the exact sequence $0 \rt \R \rt \Sc \rt L^I_0(A)(-1) \rt 0$ with $M$ to obtain a sequence of $\R$-modules
\[
0 \rt L^I_1(M)(-1) \rt \R \otimes_A M \rt M[u] \rt  L^I_0(M)(-1) \rt 0.
\] 
Thus $ L^I_1(M)(-1)$ is a $\R$-submodule of $\R \otimes_A M$. The latter module is a finitely generated $\R$-module. It follows that $L^I_1(M)$ is a finitely generated $\R$-module.

\s Now assume that $A$ is \CM \ and $M$ is maximal \CM.  Set $N = \Syz^A_1(M)$ and $F = A^{\mu(M)}$ (here $\mu(M)$ is the cardinality of  a minimal generator set  of $M$). We tensor the exact sequence
\[
0 \rt N \rt F  \rt M \rt 0,
\]
with $L^I_0(A)$ to obtain an exact sequence of $\R$-modules
\[
0 \rt L^I_1(M) \rt  L^I_0(N) \rt  L^I_0(F) \rt  L^I_0(M) \rt 0.
\]
As $e_0^I(F) = e_0^I(M) + e_0^I(N)$ we get that the function $n \rt \ell(\Tor^A_1(M, A/I^{n+1}))$ is of  polynomial type and degree $\leq d - 1$. Thus $\dim L^I_1(M) \leq d$.

We need the following result.
\begin{proposition}\label{H1}
Let $(A,\m)$ be \CM \ local ring of dimension $d \geq 1$ with infinite residue field and let $I$ be an $\m$-primary ideal.  Let $M$ be a MCM  \  $A$-module. Set $M_1 = \Syz^A_1(M)$. Let $x$ be $A \oplus M \oplus M_1$-superficial \wrt \ $I$.  Let $\R = A[Iu]$ be the Rees algebra of $A$ \wrt  \  $ I$. Set $X = xu \in \R_1$.  If $N$ is a $\R$-module, let $H_1(X, N)$ denote the first Koszul homology of $N$ \wrt \  $X$. Then
\begin{enumerate}[\rm (1)]
\item
$H_1(X, L_0^I(M))  =  \bigoplus_{n \geq 0}(I^{n+1}M \colon x)/I^nM$.
\item
$X$ is $L_0^I(M)$ regular if and only if $x^*$ is $G_I(M)$-regular.
\item
If $x^*$ is $G_I(A)$-regular then 
$H_1(X,L_1^I(M))  \cong H_1(X, L_0^I(M_1))$.
\end{enumerate} 
\end{proposition}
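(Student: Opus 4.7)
The plan is to handle the three parts in order, using that the Koszul complex on a single element has only two terms, so its homology is simply the kernel and cokernel of a single multiplication map. The genuine content lies in (2); parts (1) and (3) are then formal consequences via long exact sequences in Koszul homology.

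For (1), $K(X, L_0^I(M))$ is the complex $0 \rt L_0^I(M)(-1) \xrightarrow{X} L_0^I(M) \rt 0$, so $H_1(X, L_0^I(M))$ equals the kernel of multiplication by $X$. In internal degree $n$ this is the kernel of $x \colon M/I^nM \rt M/I^{n+1}M$, which is $(I^{n+1}M :_M x)/I^nM$, and collecting degrees yields the stated formula.

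For (2), $X$ is $L_0^I(M)$-regular iff $H_1(X, L_0^I(M)) = 0$, and by (1) this is equivalent to $(I^{n+1}M :_M x) = I^nM$ for every $n \geq 1$. The classical equivalence of this condition with $G_I(M)$-regularity of $x^*$ is the main obstacle, though standard: the forward implication is immediate upon lifting a homogeneous element of $\ker x^*$, while the reverse direction uses Krull's intersection theorem to select, for a hypothetical nonzero $m \in (I^{n+1}M :_M x) \setminus I^nM$, the largest $k < n$ with $m \in I^kM$, whereupon $m^* \in G_I(M)_k$ is nonzero while $x^* m^* = 0$, contradicting regularity.

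For (3), I would split the four-term exact sequence
\[
0 \rt L_1^I(M) \rt L_0^I(M_1) \rt L_0^I(F) \rt L_0^I(M) \rt 0
\]
into two short exact sequences via $C = \image(L_0^I(M_1) \rt L_0^I(F))$. Since $x^*$ is $G_I(A)$-regular, applying (2) with $M$ replaced by $A$ shows $X$ is $L_0^I(A)$-regular, so the free module $L_0^I(F) = L_0^I(A)^{\mu(M)}$ satisfies $H_1(X, L_0^I(F)) = 0$. The Koszul long exact sequence for $0 \rt C \rt L_0^I(F) \rt L_0^I(M) \rt 0$, combined with the automatic vanishing $H_i = 0$ for $i \geq 2$ of a one-variable Koszul complex, then forces $H_1(X, C) = 0$. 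Finally, the long exact sequence for $0 \rt L_1^I(M) \rt L_0^I(M_1) \rt C \rt 0$, together with $H_1(X, C) = H_2(X, C) = 0$, delivers the isomorphism $H_1(X, L_1^I(M)) \cong H_1(X, L_0^I(M_1))$.
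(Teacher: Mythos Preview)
Your proof is correct and follows essentially the same route as the paper. The paper's proof of (2) is a one-line reference to the equivalence already recorded in \ref{mod-sup-h}(4) (namely $x^*$ is $G_I(M)$-regular iff $b_I(M,z)=0$), whereas you reprove that equivalence directly; your argument is fine, though the appeal to Krull's intersection theorem is unnecessary since you already assume $m\notin I^nM$, so the largest $k<n$ with $m\in I^kM$ exists automatically. For (3) the paper splits via $E=\ker\pi$, which is exactly your $C$, and then argues just as you do.
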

\begin{proof}
(1) The map $M/I^nM \xrightarrow{X} M/I^{n+1}M$ is given by $m + I^nM \mapsto xm + I^{n+1}M$. The result follows.

(2) This follows from (1).

(3) We have an exact sequence $0 \rt L_1^I(M) \rt L_0^I(M_1) \rt L_0^I(F_0) \xrightarrow{\pi} L_0^I(M) \rt 0$. Let $E = \ker \pi$. As $x^*$ is $G_I(A)$-regular we get that $H_1(X, L_0^I(A)) = 0$. So $H_1(X, E) = 0$.  The result follows. 
\end{proof}

\textbf{II}:  \textit{Ratliff-Rush filtration.} 

Let $(A,\m)$ be a Noetherian  local ring and let $I$ be an $\m$-primary  ideal in $A$. Let $M$ be a finitely generated $A$-module. The Ratliff-Rush submodule $\widetilde{IM}$ of $M$ \wrt \ $I$
is defined by
\[
\widetilde{IM} = \bigcup_{k \geq 0} (I^{k+1}M \colon I^k).
\]
If $\depth M > 0 $ then $\widetilde{I^kM} =  I^kM $ for $k \gg 0$. 

\s Now assume $d = \dim A > 0$ and $A$ is \CM. Also assume $M$ is MCM $A$-module. Let $\R$ be the Rees algebra of $A$ \wrt \ $I$ and let $\mathcal{M}$ be its unique maximal graded ideal. Then by \cite[4.7]{Pu2} we have
\[
H^0_\mathcal{M}(L^I_0(M)) = \bigoplus_{n \geq 0} \frac{ \widetilde{I^{n+1}M}}{I^{n+1}M}.
\]
We need the following result:
\begin{proposition}\label{RR}[with assumptions as above]
If $\depth G_I(A) > 0$ then 
$$H^0_\mathcal{M}(L^I_1(M)) \cong H^0_\mathcal{M}(L^I_0(\Syz^A_1(M))). $$
\end{proposition}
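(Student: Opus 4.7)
The plan is to chase the long exact sequences in local cohomology attached to the four-term exact sequence
\[
0 \rt L^I_1(M) \rt L^I_0(M_1) \rt L^I_0(F) \rt L^I_0(M) \rt 0
\]
of $\R$-modules recorded earlier, where $M_1 = \Syz^A_1(M)$ and $F = A^{\mu(M)}$. First I would split this into two short exact sequences
\[
0 \rt L^I_1(M) \rt L^I_0(M_1) \rt E \rt 0, \qquad 0 \rt E \rt L^I_0(F) \rt L^I_0(M) \rt 0,
\]
where $E$ is the common image/kernel.

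Next, I would compute $H^0_\mathcal{M}(L^I_0(F))$. Since $F$ is free, $L^I_0(F) \cong L^I_0(A)^{\mu(M)}$, and using the formula $H^0_\mathcal{M}(L^I_0(A)) = \bigoplus_{n \geq 0} \widetilde{I^{n+1}}/I^{n+1}$ recalled just above the proposition, one has
\[
H^0_\mathcal{M}(L^I_0(F)) \cong \bigoplus_{n \geq 0}\left(\widetilde{I^{n+1}}/I^{n+1}\right)^{\mu(M)}.
\]
This is the point where the hypothesis enters: the Ratliff--Rush characterization says that $\depth G_I(A) > 0$ is equivalent to $\widetilde{I^n} = I^n$ for every $n \geq 1$, so the displayed module vanishes.

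The rest is a diagram chase. From the long exact sequence attached to $0 \rt E \rt L^I_0(F) \rt L^I_0(M) \rt 0$ we extract the injection $H^0_\mathcal{M}(E) \hookrightarrow H^0_\mathcal{M}(L^I_0(F)) = 0$, so $H^0_\mathcal{M}(E) = 0$. Feeding this into the long exact sequence for $0 \rt L^I_1(M) \rt L^I_0(M_1) \rt E \rt 0$ gives
\[
0 \rt H^0_\mathcal{M}(L^I_1(M)) \rt H^0_\mathcal{M}(L^I_0(M_1)) \rt 0,
\]
which is the desired isomorphism.

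I do not anticipate a serious obstacle: the whole argument is formal once the four-term sequence is broken in two. The only step requiring genuine input is the identification $H^0_\mathcal{M}(L^I_0(A)) = 0$ under $\depth G_I(A) > 0$, and this reduces to the standard Ratliff--Rush fact that depth-positivity of the associated graded ring is equivalent to $\widetilde{I^n} = I^n$ for all $n \geq 1$ (not merely for $n \gg 0$).
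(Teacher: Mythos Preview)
Your proof is correct and follows essentially the same route as the paper: both use the four-term exact sequence, the vanishing $H^0_\mathcal{M}(L^I_0(F)) = 0$ via the Ratliff--Rush identity $\widetilde{I^n} = I^n$ under $\depth G_I(A) > 0$, and left-exactness of $H^0_\mathcal{M}$. The only cosmetic difference is that the paper applies left-exactness directly to the truncated sequence $0 \to L^I_1(M) \to L^I_0(M_1) \to L^I_0(F)$ rather than splitting through $E$, but this amounts to the same computation.
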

\begin{proof}
Set $M_1 = \Syz^A_1(M)$. The exact sequence $0 \rt M_1 \rt F \rt M \rt 0$ yields an exact sequence
\[
0 \rt L_1^I(M) \rt L^I_0(M_1) \rt L_0^I(F) \rt L^I_0(M) \rt 0.
\]
Therefore we have an exact sequence
\[
0 \rt H^0_\M(L^I_1(M)) \rt H^0_\M(L^I_0(M_1)) \rt H^0_\M(L_0^I(F))
\]
Now as $\depth G_I(A) > 0$ we get that $\widetilde{I^n} = I^n $ for all $n \geq 1$. So $H^0_\M(L^I_0(A)) = 0$. Thus $H^0_\M(L^I_0(F)) = 0$. The result follows.
\end{proof}

\s \label{bi} Now assume that $A = Q/(\mathbf{f})$ where $\mathbf{f} = f_1,\ldots, f_c$ is a $Q$-regular sequence and $M$ is a finitely generated $A$-module with $\projdim_Q M$ finite. Let
$I$ be an $\m$-primary ideal. Set $L(M) = \bigoplus_{i \geq 0}L_i^I(M) = \bigoplus_{i \geq 0}\Tor^A_i(M, L^I_0(A))$.  Give $L(M)$ a structure of a bigraded $S = \R[t_1,\cdots,t_c]$-module as discussed in \ref{bigraded}.
Then $H^0_\M(L(M))  = \bigoplus_{i\geq 0}H^0_\M(L^I_i(M))$ is a bigraded $S$-module. To see this note that $H^0_\M(L(M)) = H^0_{\M S}(L(M))$. 
\section{dimension one}
In this section we assume $\dim A = 1$. We prove that if $M$ is MCM $A$-module with finite GCI dimension then the function $i \mapsto e_1^I(\Syz^A_i(M))$ is of quasi-polynomial type with period two. If $G_I(A)$ is \CM \ then we show that the functions $j \mapsto \depth G_I(\Syz^A_{2j}(M))$ and $j \mapsto \depth G_I(\Syz^A_{2j+1}(M))$ is constant for $j \gg 0$.

The following Lemma is crucial.
\begin{lemma}\label{crucial}
Let $(A,\m)$ be a \CM \ local ring of dimension one and with an infinite residue field. Let $I$ be an $\m$-primary ideal and let $x$ be $A$-superficial with respect to $I$. Assume $I^{r+1} = xI^r$. Let $M$ be a MCM $A$-module. Then
\begin{enumerate}[\rm (1)]
\item
$\deg h^I(M,z)  \leq r$
\item
The postulation number of the Hilbert-Samuel function of $M$ \wrt \ $I$ is $\leq r - 2$.
\item
For $n \geq r$ and for all $i \geq 1$ we have
\[
\Tor^A_i(M, A/I^{n}) \cong \Tor^A_i(M, A/I^{n+1}) \quad \text{as $A$-modules}.
\]
\item
$\widetilde{I^nM} = I^nM$ for all $n \geq r$.
\end{enumerate}
\end{lemma}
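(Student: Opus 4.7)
The strategy is to reduce all four parts to the single observation that $x$ is $M$-regular, after which $I^{n+1}M = xI^nM$ for all $n \geq r$ drives everything. Since $M$ is \CM\ of dimension one, every associated prime of $M$ has the property $\dim A/P = 1$, so $\Ass M$ consists of minimal primes of $A$; as $A$ itself is one-dimensional \CM, $\Ass A$ is exactly this set of minimal primes, and $x$ is $A$-regular (a standard consequence of $A$-superficiality since $\depth A = 1 > 0$). Hence $x \notin \bigcup_{P \in \Ass M} P$, giving $M$-regularity. Iterating $I^{r+1} = xI^r$ then yields $I^{n+1} = xI^n$, and hence $I^{n+1}M = xI^nM$, for every $n \geq r$.

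For (1), this stabilization gives $H_I(M,n) = \ell(I^nM/xI^nM) = e_0^I(M)$ for $n \geq r$ (the length computation follows from comparing the exact sequences induced by $x \cdot$ on $M$ and on $I^nM$), and summing the resulting geometric tail produces
\[
h_I(M,z) = (1-z)\sum_{n=0}^{r-1} H_I(M,n) z^n + e_0^I(M) z^r,
\]
a polynomial of degree at most $r$. For (2), writing $s = \deg h_I(M,z) \leq r$ and $h_I(M,z) = \sum_{i} h_i z^i$, the expansion $H^{(1)}_I(M,n) = \sum_{i=0}^{\min(n,s)} h_i(n-i+1)$ agrees with the polynomial $P_I(M,n) = \sum_{i=0}^s h_i(n-i+1)$ for all $n \geq s$; crucially at $n = s-1$ the one missing summand is $h_s \cdot 0 = 0$, so agreement actually persists one step earlier, yielding postulation number $\leq s-2 \leq r-2$.

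For (3), multiplication by $x$ gives an $A$-module isomorphism $I^n \xrightarrow{x} I^{n+1}$ for $n \geq r$ (using $A$-regularity of $x$), so $\Tor^A_i(M, I^n) \cong \Tor^A_i(M, I^{n+1})$ for every $i \geq 0$. Combining with the short exact sequences $0 \to I^n \to A \to A/I^n \to 0$ and its $n+1$ analogue, and using $\Tor^A_i(M, A) = 0$ for $i \geq 1$, yields the claim for $i \geq 2$. For $i = 1$ I would identify $\Tor^A_1(M, A/I^n)$ with the kernel of the multiplication map $\phi_n \colon M \otimes I^n \to M$; the commutative square with vertical maps $1 \otimes x$ (an iso on the left, since $I^n \xrightarrow{x} I^{n+1}$ is) and $x\cdot$ on $M$ (injective by $M$-regularity on the right) satisfies $\phi_{n+1}\circ (1\otimes x) = x \cdot \phi_n$, which forces $\ker \phi_n \cong \ker \phi_{n+1}$. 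Finally for (4), $I^nM \subseteq \widetilde{I^nM}$ is automatic; conversely $I^{n+k}M = x^k I^n M$ together with $M$-regularity of $x^k$ gives $(I^{n+k}M :_M x^k) = I^n M$, and since $x^k \in I^k$ this forces $(I^{n+k}M :_M I^k) \subseteq I^n M$. The subtlest point is the one-step earlier agreement in (2); once that telescoping cancellation is isolated, every other part is routine bookkeeping flowing from $x$-regularity on $M$.
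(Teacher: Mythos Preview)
Your proof is correct. The main difference from the paper is that you give fully self-contained arguments, whereas the paper dispatches (1), (2) and (4) by citing \cite{Pu1} and \cite{Pu2} after observing that $x$ is $M$-superficial. For (3) the paper takes a slightly cleaner route: from $I^{n+1}\subseteq (x)$ one has, for $n\geq r$, a short exact sequence
\[
0 \rt A/I^n \xrightarrow{\ \alpha_n\ } A/I^{n+1} \rt A/(x) \rt 0,\qquad \alpha_n(a+I^n)=xa+I^{n+1},
\]
and since $x$ is $M$-regular, $\Tor^A_i(M,A/(x))=0$ for all $i\geq 1$, giving the isomorphism for every $i\geq 1$ in one stroke. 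Your approach via the isomorphism $x\colon I^n\to I^{n+1}$ reaches the same conclusion but forces you to split off the case $i=1$ and argue with the kernel of $M\otimes I^n\to M$; this works, but the paper's single exact sequence avoids that case distinction. Conversely, your explicit computations for (1), (2), (4) (especially the ``one step earlier'' observation in (2)) make the argument independent of the literature, which is a genuine gain in readability.
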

\begin{proof}
We first note that as $x$ is $A$-regular it is also $M$-regular. Thus the equality $I^{r+1}M = xI^rM$ implies that $(I^{n+1}M \colon x) = I^nM$ for $n \geq r$. Therefore $x$ is $M$-superficial \wrt \ $I$. 

(1) This follows from \cite[Proposition 13]{Pu1}.

(2) This follows from (1).

(3) Set $B  = A/(x)$. For $n \geq r $ we have an exact sequence
\[
0 \rt A/I^n \xrightarrow{\alpha_n} A/I^{n+1}  \rt B \rt 0.
\]
Here $\alpha_n( a + I^n) = xa + I^{n+1}$. We have also used that for $n \geq r $,  $I^{n+1} \subseteq (x)$. As $\Tor^A_i(M, B) = 0$ for $i \geq 1$ we get the required result.

(4) As $(I^{n+1}M \colon x) = I^n M$ for all $n \geq r$, by  \cite[2.7]{Pu2} we get $\widetilde{I^nM} = I^nM$ for $n \geq r$.
\end{proof}

As an immediate corollary we obtain
\begin{theorem}\label{e1}
Let $(A,\m)$ be \CM \ local ring  of dimension $d$ and let $I$ be an $\m$-primary ideal. Let $M$ be a MCM $A$-module. Assume $M$ has finite GCI-dimension over $A$.
Then the function $i \mapsto e_1^I(\Syz^A_i(M))$ is of  quasi-polynomial type with period two and degree $\leq \cx(M)-1$.
\end{theorem}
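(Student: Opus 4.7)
The plan is to reduce to dimension one using a simultaneous superficial sequence for all syzygies, and then to extract a two-term functional equation from the short exact sequence of a syzygy.

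First I would appeal to Remark \ref{base-change} so that $A = Q/(\mathbf{f})$ for a complete Noetherian local ring $Q$ and a $Q$-regular sequence $\mathbf{f} = f_1,\ldots,f_c$ with $\projdim_Q M < \infty$ and with the residue field of $A$ uncountable. Write $M_i = \Syz^A_i(M)$; since $M$ is MCM, every $M_i$ is MCM. Apply Lemma \ref{count-sup} to the family $\{A, M, M_1, M_2,\ldots\}$ to obtain a single sequence $x_1,\ldots,x_d \in I$ which is superficial (and hence regular, since the ring is \CM \ and every module in the family is MCM) for each member of the family simultaneously. Set $\bar A = A/(x_1,\ldots,x_{d-1})$ and $\bar N = N/(x_1,\ldots,x_{d-1})N$. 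Lifting each $x_j$ to $y_j \in Q$ extends $\mathbf{f}$ to a $Q$-regular sequence with quotient $\bar A$, and the Koszul complex of $y_1,\ldots,y_{d-1}$ on $M$ is acyclic, so $\projdim_Q \bar M < \infty$; thus $\bar M$ has finite GCI-dimension over $\bar A$, with the same Betti numbers and hence $\cx(\bar M) = \cx(M)$. Because the $x_j$'s are regular on every $M_i$, the minimal $A$-free resolution $\mathbb{F}$ of $M$ descends to a minimal $\bar A$-free resolution of $\bar M$, giving $\Syz^{\bar A}_i(\bar M) = \overline{M_i}$. Iterating \ref{mod-sup-h}(3)(a), which preserves $e_1$ at every step where the current dimension is at least two, yields $e_1^I(M_i) = e_1^{\bar I}(\Syz^{\bar A}_i(\bar M))$, reducing us to $\dim A = 1$.

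Now suppose $\dim A = 1$ and fix an $A$-superficial $x \in I$ with $I^{r+1} = x I^r$. Tensoring $0 \rt M_{i+1} \rt F_i \rt M_i \rt 0$ with $A/I^{n+1}$ produces the exact sequence
\[
0 \rt \Tor^A_{i+1}(M, A/I^{n+1}) \rt M_{i+1}/I^{n+1}M_{i+1} \rt F_i/I^{n+1}F_i \rt M_i/I^{n+1}M_i \rt 0,
\]
where $\Tor^A_1(M_i, A/I^{n+1}) = \Tor^A_{i+1}(M, A/I^{n+1})$ by dimension shifting. For $n \gg 0$ each non-Tor term is computed by a Hilbert--Samuel polynomial of degree one; the leading terms in $(n+1)$ cancel using $e_0^I(M_i) + e_0^I(M_{i+1}) = \rank(F_i)\cdot e_0^I(A)$, leaving
\[
e_1^I(M_i) + e_1^I(M_{i+1}) = \rank(F_i)\cdot e_1^I(A) - \ell_A\!\big(\Tor^A_{i+1}(M, A/I^{n+1})\big).
\]
By Lemma \ref{crucial}(3) the Tor term is independent of $n$ for $n \geq r$, so it equals $\ell_A(\Tor^A_{i+1}(M, A/I^{r+1}))$.

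Both $\rank(F_i) = \ell(\Tor^A_i(M,k))$ and $\ell_A(\Tor^A_{i+1}(M, A/I^{r+1}))$ are lengths of Tors of $M$ against finite-length modules, so Proposition \ref{growth}(4) shows that each is a function of $i$ of quasi-polynomial type with period two and degree at most $\cx(M) - 1$. Hence the same holds for $i \mapsto e_1^I(M_i) + e_1^I(M_{i+1})$, and Corollary \ref{g2-c} applied to $f(i) = e_1^I(M_i)$ (which is non-negative because each $M_i$ is MCM) delivers the conclusion. The main obstacle is verifying that the descent to dimension one is legitimate: one must produce a single superficial sequence working for all the infinitely many syzygies simultaneously, and check that the GCI-structure, the complexity, and the identification $\Syz^{\bar A}_i(\bar M) = \overline{M_i}$ all survive the quotient. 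Lemma \ref{count-sup}, together with the fact that superficial elements act regularly on MCM modules, is precisely what makes this step work.
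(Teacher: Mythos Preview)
Your proof is correct and follows essentially the same route as the paper: base change via Remark \ref{base-change}, reduction to dimension one via a simultaneous superficial sequence (Lemma \ref{count-sup} together with \ref{mod-sup-h}(3)(a)), and then in dimension one the syzygy short exact sequence tensored with $A/I^{n+1}$, combined with Lemma \ref{crucial} and Proposition \ref{growth}(4), finished off by Corollary \ref{g2-c}. You are more careful than the paper in spelling out why the GCI presentation, the complexity, and the identification $\Syz^{\bar A}_i(\bar M)=\overline{M_i}$ survive the reduction, and in justifying $e_1^I(M_i)\geq 0$ for the application of Corollary \ref{g2-c}; the only point you omit is the Artinian case $d=0$, which the paper handles separately via Corollary \ref{zero-dim}.
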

\begin{proof}
By  Corollary \ref{zero-dim} the result holds when $A$ is Artin.
By Remark \ref{base-change} we may assume $A = Q/(\mathbf{f})$ where $Q$ is complete with uncountable residue field, $\mathbf{f} = f_1,\cdots, f_c$ is a $Q$-regular sequence and 
$\projdim_Q M$ is finite.  Let $\mathbb{F}$ be a minimal resolution of $M$. Set $M_i = \Syz^A_i(M)$. If $d \geq 2$ then by  \ref{count-sup} we may choose $\mathbf{x} = x_1,\ldots,x_{d-1}$ which is $A \oplus M_i$-superficial for all $i \geq 0$. As $e^I_1(M_i) = e_1^I(M_i/ \mathbf{x}M_i)$ for all $i \geq 0$, we may assume $\dim A = 1$.

Let $x$ be $A$-superficial \wrt \ $I$. Say $I^{r+1} = xI^r$. Then by
Lemma \ref{crucial} $x$ is $M_i$-superficial for all $i \geq 0$ and the postulation number of  the Hilbert-Samuel function of $M_i$ \wrt  \ $I$ is $\leq r - 2$.

  Fix $n \geq r$. The exact sequence
$0 \rt M_{i+1} \rt F_i \rt M_i \rt 0$ yields an exact sequence
\[
0 \rt \Tor^A_{i+1}(M, A/I^{n+1}) \rt M_{i+1}/I^{n+1}M_{i+1} \rt F_i/I^{n+1}F_i \rt M_i/I^{n+1}M_i \rt 0.
\]
Using Lemma \ref{crucial},
it follows that
\[
\ell(\Tor^A_{i+1}(M, A/I^{n+1})) = (\rank F_i)e_1^I(A) - e_1^I(M_i) - e_1^I(M_{i+1}).
\]
The functions $i \mapsto \rank F_i$ and  $i \mapsto \ell(\Tor^A_{i}(M, A/I^n))$ are of quasi-polynomial type with period two and degree $\leq \cx(M)-1$. The result now follows from \ref{g2-c}.
\end{proof}

We now prove:

\begin{theorem}\label{asymp-depth-1}
Let $(A,\m)$ be \CM \ local ring of dimension one and let $I$ be an $\m$-primary ideal with $G_I(A)$ \CM.  Let $M$ be a MCM $A$-module. Assume $M$ has finite GCI dimension over $A$. Then the functions $j \mapsto \depth G_{I}(\Syz^A_{2j}(M))$ and $j \mapsto \depth G_I(\Syz^A_{2j+1}(M))$ are constant for $j \gg 0$
\end{theorem}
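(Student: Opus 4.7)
The plan is to translate the depth question into the vanishing of a single nonnegative, quasi-polynomial function. First I would perform the base change of Remark \ref{base-change}, so that we may assume $A = Q/(\bF)$ with $Q$ complete CM with uncountable residue field, $\bF = f_1,\ldots,f_c$ a $Q$-regular sequence, and $\projdim_Q M < \infty$. Set $M_i := \Syz^A_i(M)$. Since each $M_i$ is a MCM $A$-module, Lemma \ref{count-sup} applied to the countable family $\{A\} \cup \{M_i\}_{i \geq 0}$ yields a single $x \in I$ which is simultaneously $A$-superficial and $M_i$-superficial (and hence $M_i$-regular) for every $i \geq 0$.

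For each $i$, formula \ref{mod-sup-h}(3b) with $r = 1$ gives
\[
\beta(i) := b_I(M_i,1) = e_1^I(M_i) - e_1^I(M_i/xM_i) \geq 0.
\]
Because $b_I(M_i,z)$ has nonnegative integer coefficients, $\beta(i) = 0$ is equivalent to $b_I(M_i,z) = 0$, which by \ref{mod-sup-h}(4) amounts to $x^*$ being $G_I(M_i)$-regular, i.e.\ (since $\dim G_I(M_i) = 1$) to $\depth G_I(M_i) = 1$. Thus $\depth G_I(M_i) \in \{0,1\}$ is fully determined by whether $\beta(i)$ vanishes, and it suffices to prove that $\beta$ is of quasi-polynomial type with period two.

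The function $i \mapsto e_1^I(M_i)$ is quasi-polynomial of period two by Theorem \ref{e1}. For the companion term, lift $x$ to $y \in Q$; by \ref{Eis-reg}, $y$ is $Q$-regular and $\projdim_{Q/(y)}(M/xM) < \infty$, so $M/xM$ has finite GCI-dimension over the Artinian ring $A/(x)$. Since $x$ is $A \oplus M$-regular, the minimal $A$-free resolution $\mathbb{F}$ of $M$ reduces modulo $x$ to a minimal $A/(x)$-free resolution of $M/xM$, so
\[
M_i/xM_i \cong \Syz^{A/(x)}_i(M/xM).
\]
Corollary \ref{zero-dim} applied to the Artin ring $A/(x)$ and to $M/xM$ then shows that $i \mapsto e_1^I(M_i/xM_i)$ is of quasi-polynomial type with period two; hence so is the difference $\beta$.

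To conclude, write $\beta(2j+k) = P_k(j)$ for $j \gg 0$ and $k \in \{0,1\}$ with $P_k \in \mathbb{Q}[X]$. If $P_k \equiv 0$ then $\beta(2j+k) = 0$ eventually, so $\depth G_I(M_{2j+k}) = 1$ for $j \gg 0$; otherwise the nonzero polynomial $P_k$ takes nonnegative values for large $j$, hence $P_k(j) > 0$ eventually and $\depth G_I(M_{2j+k}) = 0$ for $j \gg 0$. The only mildly technical point is arranging a single superficial $x$ that works for the entire infinite family $\{M_i\}_{i \geq 0}$ (which is precisely why the uncountable residue field reduction is needed) together with the identification $M_i/xM_i \cong \Syz^{A/(x)}_i(M/xM)$ that lets the Artin-ring Corollary \ref{zero-dim} supply the second quasi-polynomial; everything else is a formal consequence of Theorem \ref{e1} and the nonnegativity of $b_I(-,1)$.
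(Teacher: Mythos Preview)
Your proof is correct, and it takes a genuinely different route from the paper's argument. Both proofs ultimately detect $\depth G_I(M_i)$ via the vanishing of $b_I(M_i,1)=\ell\big(H_1(xu,L^I_0(M_i))\big)$, but they establish the quasi-polynomial behaviour of this quantity differently. The paper works directly with the bigraded $\R[t_1,\ldots,t_c]$-module $E=\bigoplus_{i\geq 0}L^I_i(M)$, uses the hypothesis that $G_I(A)$ is \CM\ (so that $x^*$ is $G_I(A)$-regular) together with Proposition~\ref{H1}(3) to identify $H_1(X,E)\cong\bigoplus_i H_1(X,L^I_0(M_i))$, and then observes that this sits inside $E_{n\leq r-1}$ and is therefore an Artinian $A[t_1,\ldots,t_c]$-module. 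You instead write $b_I(M_i,1)=e_1^I(M_i)-e_1^I(M_i/xM_i)$ and recognise each term as an already-known period-two quasi-polynomial (Theorem~\ref{e1} for the first, Corollary~\ref{zero-dim} over the Artin ring $A/(x)$ for the second). Your approach is more elementary in that it avoids the bigraded Koszul-homology computation, and it has the pleasant side effect that the hypothesis ``$G_I(A)$ is \CM'' is never invoked: once $x$ is chosen $M_i$-superficial for all $i$ (via Lemma~\ref{count-sup} after passing to an uncountable residue field), the equivalence $\beta(i)=0\Leftrightarrow\depth G_I(M_i)=1$ follows from \ref{mod-sup-h}(4) alone. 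The paper's route, on the other hand, sets up the $S$-module machinery that is reused in the dimension-two results (Theorems~\ref{e2} and~\ref{asymp-depth-2}), so it is better adapted to the later sections.
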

\begin{proof}
By Remark \ref{base-change} we may assume $A = Q/(\mathbf{f})$ where $Q$ is complete with infinite residue field, $\mathbf{f} = f_1,\cdots, f_c$ is a $Q$-regular sequence and 
$\projdim_Q M$ is finite.  Let $\mathbb{F}$ be a minimal resolution of $M$. Set $M_i = \Syz^A_i(M)$.

Let $x$ be $I$-superficial. Then $x^*$ is $G_I(A)$-regular. Say $I^{r+1} = xI^r$. Then by Lemma \ref{crucial}  we get that $x$ is $M_i$-superficial for all $i \geq 0$. We have
$(I^{n+1}M_i \colon x)  = I^nM_i$ for all $n \geq r$ and for all $i \geq 0$.

Let $t_1,\ldots,t_c$ be the Eisenbud operators on $\mathbb{F}$.  Let $\R$ be the Rees algebra of $A$ \wrt \  $I$. By \ref{bigraded} we get that $E  = \bigoplus_{i \geq 0} L_i^I(M)$ is a bigraded $S =  \R[t_1,\ldots, t_c]$-module.
Set $X = xu$. Then $H_1(X,E)$ is a bigraded $S$-module. By \ref{H1}  we get that $H_1(X,E) = \bigoplus_{i \geq 0} H_1(X, L_0^I(M_i))$.  We now note that $H_1(X,E) \subseteq E_{n \leq r-1}$ and so is  an Artinian $A[t_1,\ldots, t_c]$-module. It follows that the function $j \mapsto \ell(H_1(X, L_0^I(M_i)))$ is of  quasi-polynomial type with period two.  By \ref{H1} the result follows.
\end{proof}

We now prove
\begin{proposition}\label{e2-dim1}
Let $(A,\m)$ be \CM \ local ring of dimension one and let $I$ be an $\m$-primary ideal. Let $M$ be a MCM $A$-module of finite GCI dimension over $A$.
Then the function $i \mapsto e_2^I(\Syz^A_i(M))$ is of quasi-polynomial type with period two and degree $\leq \cx(M)-1$.
\end{proposition}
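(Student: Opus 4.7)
The plan is to mirror the strategy used for $e_1^I$ in Theorem~\ref{e1}, pushed one derivative further at the level of $h$-polynomials. Using Remark~\ref{base-change} I reduce to $A = Q/(\mathbf{f})$ with $Q$ a complete \CM \ local ring with uncountable residue field, $\mathbf{f}$ a $Q$-regular sequence, and $\projdim_Q M < \infty$. Let $\mathbb{F}$ be a minimal $A$-free resolution of $M$, set $M_i = \Syz^A_i(M)$, and fix an $I$-superficial element $x \in A$ with $I^{r+1} = xI^r$. By Lemma~\ref{crucial}, $x$ is simultaneously $M_i$-superficial for every $i$, and $\Tor^A_j(M,A/I^n) \cong \Tor^A_j(M,A/I^{n+1})$ for all $n \geq r$ and all $j \geq 1$.

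Since $e_2^I$ is invisible in the Hilbert-Samuel polynomial when $\dim A = 1$, I work directly with $h$-polynomials. Tensoring $0 \to M_{i+1} \to F_i \to M_i \to 0$ with $A/I^{n+1}$, taking lengths and summing against $z^n$ yields the identity
\[
h_I(M_i,z) + h_I(M_{i+1},z) = (\rank F_i)\, h_I(A,z) + (1-z)^2 T_i(z),
\]
where $T_i(z) := \sum_{n \geq 0} \ell\bigl(\Tor^A_{i+1}(M,A/I^{n+1})\bigr) z^n$. The critical point is Lemma~\ref{crucial}(3): the coefficient sequence of $T_i(z)$ is eventually constant in $n$ with threshold independent of $i$, and stable value $c_i = \ell(\Tor^A_{i+1}(M,A/I^r))$. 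Hence $T_i(z)$ has a single simple pole at $z=1$ and $(1-z)^2 T_i(z)$ is a polynomial. Differentiating twice at $z=1$ and dividing by $2$ yields
\[
e_2^I(M_i) + e_2^I(M_{i+1}) = (\rank F_i)\, e_2^I(A) + \sum_{n=0}^{r-2} \ell\bigl(\Tor^A_{i+1}(M,A/I^{n+1})\bigr) - (r-1)\, c_i .
\]

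To conclude, Proposition~\ref{growth}(4) says that for each fixed $s$ the function $i \mapsto \ell(\Tor^A_{i+1}(M,A/I^{s}))$ is quasi-polynomial of period two and degree $\leq \cx(M)-1$, and the same holds for $i \mapsto \rank F_i = \ell(\Tor^A_i(M,k))$. Hence the right-hand side of the last display has this same shape in $i$, and Corollary~\ref{g2-c} upgrades the conclusion from $e_2^I(M_i) + e_2^I(M_{i+1})$ to $e_2^I(M_i)$. The main obstacle I expect is the middle paragraph: one must translate the exact-sequence length identity into an identity of $h$-polynomials and perform the second-derivative extraction correctly, precisely because $e_2^I$ no longer appears in the Hilbert-Samuel polynomial. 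Once that bookkeeping is done the argument is routine, following the same template as Theorem~\ref{e1}.
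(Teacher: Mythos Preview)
Your proof is correct and shares the paper's overall scaffold: reduce via Remark~\ref{base-change}, pick an $A$-superficial $x$ with $I^{r+1}=xI^r$, invoke Lemma~\ref{crucial} for the eventual stability of $\Tor^A_j(M,A/I^{n+1})$ in $n$, use the syzygy exact sequence, and finish with Proposition~\ref{growth} and Corollary~\ref{g2-c}. The difference is only in how $e_2$ is extracted. The paper packages all Tor lengths into a bivariate series
\[
\sum_{i\geq 1,\,n\geq 0}\ell\bigl(\Tor^A_i(M,A/I^{n+1})\bigr)z^iw^n=\frac{p(z,w)}{(1-z^2)^{\cx(M)}(1-w)},
\]
multiplies by $1/(1-w)$, and invokes Lemma~\ref{g2-growth} to conclude that the coefficient function $g_1(i)=(\rank F_{i-1})e_2^I(A)-e_2^I(M_i)-e_2^I(M_{i-1})$ is quasi-polynomial of period two. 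You instead stay in one variable: from the $h$-polynomial identity $h_I(M_i,z)+h_I(M_{i+1},z)=(\rank F_i)h_I(A,z)+(1-z)^2T_i(z)$ you differentiate twice at $z=1$ and obtain an explicit closed formula for $e_2^I(M_i)+e_2^I(M_{i+1})$ as a finite $\mathbb{Z}$-linear combination of the lengths $\ell(\Tor^A_{i+1}(M,A/I^{s}))$ for $1\le s\le r$, each of which is individually quasi-polynomial in $i$ by Proposition~\ref{growth}(4). Your route is more elementary in that it bypasses Lemma~\ref{g2-growth} entirely and yields an explicit expression; the paper's bivariate packaging is tidier bookkeeping and would scale more uniformly if one attempted to push the argument to higher~$e_j$.
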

\begin{proof}
By Remark \ref{base-change} we may assume $A = Q/(\mathbf{f})$ where $Q$ is complete with infinite residue field, $\mathbf{f} = f_1,\cdots, f_c$ is a $Q$-regular sequence and 
$\projdim_Q M$ is finite.  Let $\mathbb{F}$ be a minimal resolution of $M$. Set $M_i = \Syz^A_i(M)$.

Let $x$ be $I$-superficial. Say $I^{r+1} = xI^r$. Then by Lemma \ref{crucial}  we get that 
\begin{equation}\label{eqn-1}
\Tor^A_i(M, A/I^n) \cong \Tor^A_i(M, A/I^{n+1}) \quad \text{ for all $n \geq r$.} 
\end{equation}
Fix $n \geq 0$. Then by \ref{growth} the function $i \mapsto \ell(\Tor^A_i(M, A/I^{n+1}))$ is quasi-polynomial of period two and degree $\leq \cx(M)-1$.  Thus by (\ref{eqn-1}) we get that there exists $p(z,w) \in \mathbb{Z}[z,w]$ such that
\[
\sum_{i \geq 1, n \geq 0} \ell(\Tor^A_i(M, A/I^{n+1}))z^iw^n  = \frac{p(z,w)}{(1-z^2)^{\cx(M)}(1-w)}. 
\]
Set  
$$f(i,n) =  \sum_{m = 0}^{n}\ell(\Tor^A_i(M, A/I^{m+1})).$$ 
Multiplying the previous equation 
by $1/(1-w)$ we get that
\[
\sum_{i \geq 1, n \geq 0}  f(i,n)z^iw^n  = \frac{p(z,w)}{(1-z^2)^{\cx(M)}(1-w)^2}.
\]
By  \ref{g2-growth} we get that
for $i,n \gg 0$
$$ f(i,n) = g_0(i)(n+1) - g_1(i) $$
where for $j = 0, 1$ the function  $i \mapsto g_j(i)$ is of quasi-polynomial type with period $2$ and degree $ \leq \cx(M) - 1$. 
It remains to note that
\begin{align*}
g_0(i)  &=  (\rank  F_i)e_1^I(A) - e_1^I(M_i) - e_1^I(M_{i-1}),  \text{and} \\
g_1(i) &= (\rank F_i)e_2^I(A) - e_2^I(M_i) - e_2^I(M_{i-1}).
\end{align*}
The result now follows from Corollary \ref{g2-c}.
\end{proof}
We need the following result in the next section.
\begin{proposition}\label{RR-artin-dim1}
Let $(A,\m)$ be a one dimensional \CM \ local ring  with infinite residue field and let $I$ be an $\m$-primary ideal of $A$.  Let $M$ be a maximal \CM \ $A$-module. Assume $A  = Q/(\mathbf{f})$  where $\mathbf{f} = f_1,\ldots, f_c$ is a $Q$-regular sequence and $\projdim_Q M $ is finite. Let $\mathbb{F}$ be a minimal resolution of $M$ over $A$. Let $t_1,\ldots,t_c$ be the Eisenbud operators over $\mathbb{F}$. Let $\R = A[Iu]$ be the Rees algebra of $A$ \wrt \ $I$ and let $\M$ be its unique maximal homogeneous ideal.
Then $\bigoplus_{i \geq 0} H^0_\M(L^I_i(M))$ is an Artinian $A[t_1,\ldots,t_c]$-module. Furthermore the function $i \mapsto \ell(H^0_\M(L^I_i(M)))$ is of quasi-polynomial type with period two and  degree $\leq \cx(M)-1$.
\end{proposition}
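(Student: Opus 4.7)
The plan is to analyze $L(M) = \bigoplus_{i \geq 0} L^I_i(M)$ as a bigraded $S = \R[t_1,\ldots,t_c]$-module via \ref{bi}, show that $H^0_\M(L(M))$ is concentrated in a bounded range of $\R$-degrees, and then for each fixed $\R$-degree $n$ exploit the *-Artinian structure of the ambient $T = A[t_1,\ldots,t_c]$-module $\bigoplus_i \Tor_i^A(M, A/I^{n+1})$ provided by Proposition \ref{growth}(3). Since the residue field is infinite I choose an $A$-superficial element $x \in I$, yielding $I^{r+1} = xI^r$ for some $r \geq 0$, and set $X = xu \in \R_1$.

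The crucial first step is to prove $H^0_\M(L^I_i(M))_n = 0$ for every $i \geq 0$ and every $n \geq r-1$. For $i = 0$ this is immediate from Lemma \ref{crucial}(4) applied to $M$ together with the identification $H^0_\M(L^I_0(M))_n = \widetilde{I^{n+1}M}/I^{n+1}M$ of \cite[4.7]{Pu2}. For $i \geq 1$ I would observe that the action of $X = xu \in \R_1$ on $L^I_i(M)_n = \Tor_i^A(M, A/I^{n+1})$ is precisely the map induced on $\Tor_i^A(M, -)$ by the multiplication-by-$x$ homomorphism $\alpha_{n+1} \colon A/I^{n+1} \rt A/I^{n+2}$ appearing in the proof of Lemma \ref{crucial}(3); that lemma shows $\alpha_{n+1}$ induces an isomorphism on $\Tor_i^A(M, -)$ as soon as $n + 1 \geq r$. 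Hence $X$ acts isomorphically on $L^I_i(M)_n$ whenever $n \geq r-1$, so no nonzero element there can be annihilated by any power of $X$, let alone by a power of $\M$.

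Now fix $n \in \{0, 1, \ldots, r-2\}$ and set $Y_n = \bigoplus_{i \geq 0} (H^0_\M L^I_i(M))_n$ and $W_n = \bigoplus_{i \geq 0} \Tor_i^A(M, A/I^{n+1})$. The bigraded $S$-module structure on $L(M)$ has the $t_j$ preserving the $\R$-grading and commuting with the $\R$-action, hence with $\M$-local cohomology, so $Y_n$ is a graded $T$-submodule of $W_n$. By Proposition \ref{growth}(3), $W_n$ is *-Artinian over $T$, and therefore so is the submodule $Y_n$. Matlis dualizing, $Y_n^\vee$ is a finitely generated graded $T$-module that is a quotient of $W_n^\vee$; since $\dim_T W_n^\vee \leq \cx(M)$ by Proposition \ref{growth}(1), this forces $\dim_T Y_n^\vee \leq \cx(M)$. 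Because $\deg t_j = 2$, the standard Hilbert-series argument (exactly as in Proposition \ref{growth}(2)) then shows that $i \mapsto \ell((Y_n)_i)$ is of quasi-polynomial type with period two and degree $\leq \cx(M) - 1$.

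Combining the two steps, $\bigoplus_i H^0_\M(L^I_i(M)) = \bigoplus_{n=0}^{r-2} Y_n$ is a finite direct sum of *-Artinian $T$-modules and hence Artinian over $A[t_1,\ldots,t_c]$, while
\[
\ell\bigl(H^0_\M(L^I_i(M))\bigr) = \sum_{n=0}^{r-2} \ell\bigl((Y_n)_i\bigr)
\]
is a finite sum of quasi-polynomial functions of period two and degree $\leq \cx(M)-1$, and hence is itself of this type. The main technical obstacle is the first step: identifying the $\R_1$-action on $L^I_i(M)$ with the map $\alpha_{n+1}$ of Lemma \ref{crucial}(3) so as to convert the Tor-stability statement into the invertibility of $X$ in high $\R$-degrees, thereby bounding the support of $H^0_\M$.
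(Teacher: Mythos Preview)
Your proposal is correct and follows the paper's overall strategy: bound the $\R$-degree support of $H^0_\M(L(M))$ uniformly in $i$, and then observe that $H^0_\M(L(M))$ sits inside the finite direct sum $\bigoplus_{n \leq r-1} \Tor^A_*(M, A/I^{n+1})$, each summand of which is a $*$-Artinian $T$-module of dimension $\leq \cx(M)$ by Proposition~\ref{growth}. The one place where you diverge is in how the vanishing $H^0_\M(L^I_i(M))_n = 0$ for $i \geq 1$ and $n \geq r-1$ is obtained. The paper uses the syzygy exact sequence $0 \to L^I_{i}(M) \to L^I_0(M_{i}) \to L^I_0(F_{i-1}) \to L^I_0(M_{i-1}) \to 0$ to get an inclusion $H^0_\M(L^I_i(M)) \hookrightarrow H^0_\M(L^I_0(M_i))$, and then applies Lemma~\ref{crucial}(4) to the maximal \CM\ module $M_i$ to kill the right-hand side in degrees $\geq r-1$. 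You instead bypass the syzygies and use Lemma~\ref{crucial}(3): the $X$-action on $L^I_i(M)_n$ is exactly the map on $\Tor_i$ induced by $\alpha_{n+1}$, which is an isomorphism for $n \geq r-1$, so nothing there is $\M$-torsion. Your route is a bit more direct (no need to introduce the $M_i$ or the embedding into $L^I_0(M_i)$), while the paper's route has the advantage of reusing the Ratliff-Rush description of $H^0_\M(L^I_0(-))$ that is already in play elsewhere; the second half of the argument is identical in both.
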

\begin{proof}
Let $x$ be $A$-superficial \wrt \ $I$. Say $I^{r+1} = xI^r$.  Set $M_i = \Syz^A_i(M)$ for $i \geq 0$. Then by Lemma \ref{crucial}.4 we get that $\widetilde{I^n M_i} = I^nM_i$ for $n \geq r$. So $H^0_\M(L^I_0(M_i))_n =0$ for $n \geq r-1$. The exact sequence $0 \rt M_{i+1} \rt F_i \rt M_i \rt 0$ yields an exact sequence
\[
0 \rt L_{i+1}^I(M) \rt L_0^I(M_{i+1}) \rt L_0^I(F_i) \rt L_0^I(M_i) \rt 0.
\]
So we have an inclusion $0 \rt H^0_\M(L^I_{i+1}(M)) \rt H^0_\M(L_0^I(M_{i+1}))$. It follows that $H^0_\M(L^I_{i+1}(M))_n = 0$ for $n \geq r-1$.

By \ref{bigraded},   $L = \bigoplus_{i \geq 0}L^I_i(M)$ is a bigraded $S = \R[t_1,\ldots,t_c]$-module. Then  \\ $H^0_\M(L) = \bigoplus_{i \geq 0} H^0_\M(L^I_i(M))$ is also a bigraded $S$-module. As $H^0_\M(L) \subseteq E = L_{n \leq r -1}$ it follows that  $H^0_\M(L)$ is an Artinian $A[t_1,\ldots,t_c]$-module. It follows that the function $i \mapsto \ell(H^0_\M(L^I_i(M)))$ is of quasi-polynomial type with period two. To compute its degree note that the function $i \mapsto \ell(E_{i})$  has degree $\leq \cx(M)-1$. 

\end{proof}
\section{dimension 2}
In this section we assume $(A,\m)$ is \CM \ with  $\dim A = 2$ . Let $I$ be an $\m$-primary ideal. If $M$ is a MCM $A$-module with finite GCI dimension over $A$ then we prove that the function $i \mapsto e_2^I(\Syz^A_i(M))$ is of quasi-polynomial type with period two.  If $G_I(A)$ is \CM \ then we show that the functions  $j \mapsto \depth G_I(\Syz^A_{2j}(M))$ and $j \mapsto \depth G_I(\Syz^A_{2j+1}(M))$ are constant for $j \gg 0$.

\begin{theorem}\label{e2}
Let $(A,\m)$ be a \CM \ local ring of dimension $d$ and let $I$ be an $\m$-primary ideal. Let $M$ be a MCM $A$-module. Assume that $M$ has finite GCI dimension over $A$.
Then the function $i \mapsto e_2^I(\Syz^A_i(M))$ is of quasi-polynomial type with period two and degree $\leq \cx(M)-1$.
\end{theorem}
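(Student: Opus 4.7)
The plan is to reduce to $\dim A = 2$ via simultaneous superficial reduction, and then inside dimension two to use another superficial element together with Proposition \ref{e2-dim1}. The principal obstacle will be controlling the superficial correction term via a bigraded generating function.

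By Remark \ref{base-change}, assume $A = Q/(\mathbf{f})$ with $Q$ complete \CM, $\projdim_Q M < \infty$, and the residue field of $A$ uncountable; let $\mathbb{F}$ be the minimal $A$-resolution of $M$ and write $M_j := \Syz^A_j(M)$. When $d \geq 3$, Lemma \ref{count-sup} furnishes $x_1,\ldots,x_{d-2} \in I$ simultaneously superficial for $A$ and every $M_j$. Since $x_1,\ldots,x_{d-2}$ is an $A$-regular sequence, $\mathbb{F}/(x_1,\ldots,x_{d-2})\mathbb{F}$ is the minimal resolution of $\overline M := M/(x_1,\ldots,x_{d-2})M$ over $\overline A := A/(x_1,\ldots,x_{d-2})$, so $\overline{M_j} = \Syz^{\overline A}_j(\overline M)$ and $\cx_{\overline A}(\overline M) = \cx_A(M)$. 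Applying \ref{mod-sup-h}(3)(a) successively gives $e_2^I(M_j) = e_2^I(\overline{M_j})$, so I reduce to $d = 2$. There, pick $x \in I$ superficial for $A$ and all $M_j$, set $A' := A/(x)$ and $M'_j := M_j/xM_j = \Syz^{A'}_j(M')$ (with $\cx_{A'}(M') = \cx_A(M)$). Then \ref{mod-sup-h}(3)(b) gives $e_2^I(M_j) = e_2^I(M'_j) - b_I(M_j, 1)$, and Proposition \ref{e2-dim1} applied over $A'$ shows that $j \mapsto e_2^I(M'_j)$ is quasi-polynomial of period two and degree $\leq \cx(M) - 1$.

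Tensoring $0 \to M_{j+1} \to F_j \to M_j \to 0$ with $A/I^{n+1}$ and passing to Hilbert-Samuel polynomials for $n \gg 0$, with $\dim L^I_{j+1}(M) \leq d = 2$ so that $Q_j(n) := \ell(\Tor^A_{j+1}(M, A/I^{n+1}))$ is polynomial of degree $\leq 1$ in $n$ for $n \gg 0$, write $Q_j(n) = A_j n + B_j$. Matching coefficients in the basis $\binom{n+2}{2}, n+1, 1$ yields
\[
e_2^I(M_j) + e_2^I(M_{j+1}) = (\rank F_j)\, e_2^I(A) + B_j - A_j, \qquad A_j = (\rank F_j)\, e_1^I(A) - e_1^I(M_j) - e_1^I(M_{j+1}).
\]
By Theorem \ref{e1} and Proposition \ref{growth}(4), both $A_j$ and $(\rank F_j)\, e_2^I(A)$ are quasi-polynomial of period two and degree $\leq \cx(M) - 1$, so by Corollary \ref{g2-c} it suffices to show $B_j$ is quasi-polynomial of the same type.

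The hard part is this last step. I plan to establish the bigraded Hilbert series
\[
\sum_{j \geq 1,\, n \geq 0} \ell\bigl(\Tor^A_j(M, A/I^{n+1})\bigr)\, z^j w^n \;=\; \frac{p(z,w)}{(1-z^2)^{\cx(M)}(1-w)^d}
\]
for some $p \in \Z[z,w]$, from which Lemma \ref{g2-growth} yields both Hilbert coefficients of the graded $\R$-module $L^I_{j+1}(M)$ (in particular $A_j$ and $B_j$) as quasi-polynomials in $j$ of period two and degree $\leq \cx(M) - 1$. For each fixed $n$, Proposition \ref{growth}(4) gives the $z$-series denominator $(1-z^2)^{\cx(M)}$; for each fixed $j \geq 1$, the $w$-series has denominator $(1-w)^d$ since $\dim L^I_j(M) \leq d$. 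The remaining task, which is the main obstacle, is to combine these into a uniform bivariate rational expression. I intend to do this by using the short exact sequence $0 \to A/I^n \xrightarrow{x} A/I^{n+1} \to A'/(I')^{n+1} \to 0$ (valid for $n$ past the superficial threshold) and the dim-one bigraded generating function $p'(z,w)/(1-z^2)^{\cx(M)}(1-w)$ that is implicit in the proof of Proposition \ref{e2-dim1}; the extra factor of $(1-w)$ in the denominator will arise from the length contribution of the multiplication-by-$x$ map. Finishing with Lemma \ref{g2-growth} and Corollary \ref{g2-c} then gives the result.
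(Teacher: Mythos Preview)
Your reduction to $d=2$ is correct and matches the paper, as does your identification of the key quantity: after picking $x$ superficial for all $M_j$, one has $e_2^I(M_j)=e_2^I(\overline{M_j})-b_I(M_j,1)$, and Proposition~\ref{e2-dim1} handles $e_2^I(\overline{M_j})$. So everything comes down to showing $j\mapsto b_I(M_j,1)$ is quasi-polynomial of period two. You essentially recognise this, but then switch to a second route via the constant term $B_j$ of $Q_j(n)=\ell(\Tor^A_{j+1}(M,A/I^{n+1}))$, and this is where the gap lies.

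Your plan is to prove that $\sum_{j,n}\ell(\Tor^A_j(M,A/I^{n+1}))z^jw^n$ has the rational form $p(z,w)/(1-z^2)^{\cx(M)}(1-w)^2$ by feeding the short exact sequence $0\to A/I^n\xrightarrow{x}A/I^{n+1}\to A'/(I')^{n+1}\to 0$ into $\Tor$ and invoking the one-dimensional series. But that short exact sequence produces a \emph{long} exact sequence in $\Tor$, and the connecting images are precisely $H_1(xu,L^I_j(M))\cong H_1(xu,L^I_0(M_j))$, whose total length is $b_I(M_j,1)$. So the argument is circular: to get the extra $(1-w)$ you must already control the very correction term you are after. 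Worse, there is no reason for the numerator to be a \emph{polynomial}: the $n$-support of $(I^{n+1}M_j\colon x)/I^nM_j$ is not uniformly bounded in $j$ (compare Section~8, where $\rho^I(M_j)$ is shown only to grow like $j^{\cx(M)-1}$), so the claimed bivariate rational form is likely false as stated.

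The paper avoids this entirely. First it replaces $I$ by a power so that $\depth G_I(A)>0$ (using $e_2^{I^n}=e_2^I$), which makes $X=xu$ regular on $L^I_0(A)$. Tensoring $0\to L^I_0(A)(-1)\xrightarrow{X}L^I_0(A)\to L^I_0(\overline A)\to 0$ with $M$ gives
\[
0\to K\to L(M)(-1,0)\xrightarrow{X}L(M)\to L(\overline M),
\qquad K=\bigoplus_{j\geq 0}H_1(X,L^I_0(M_j)).
\]
Now apply $H^0_{\M}$. Since $K$ is $\M$-torsion and $H^0_{\M}(L(M))_j=H^0_{\M}(L^I_0(M_j))$ vanishes in high degree for each fixed $j$, one obtains $\ell(K_j)=\ell(E_j)$ where $E$ is the image of $H^0_{\M}(L(M))$ in $H^0_{\M}(L(\overline M))$. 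By Proposition~\ref{RR-artin-dim1} the target $H^0_{\M}(L(\overline M))$ is an \emph{Artinian} $A[t_1,\dots,t_c]$-module, hence so is $E$; therefore $j\mapsto\ell(K_j)=b_I(M_j,1)$ is quasi-polynomial of period two and degree $\leq\cx(M)-1$. This local-cohomology step is the missing idea in your proposal.
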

\begin{proof}
By  Corollary \ref{zero-dim} the result holds when $A$ is Artin. By Proposition \ref{e2-dim1} the result holds when $d  = 1$.
By Remark \ref{base-change} we may assume $A = Q/(\mathbf{f})$ where $Q$ is complete with uncountable residue field, $\mathbf{f} = f_1,\cdots, f_c$ is a $Q$-regular sequence and 
$\projdim_Q M$ is finite.  Let $\mathbb{F}$ be a minimal resolution of $M$. Set $M_i = \Syz^A_i(M)$. If $d \geq 3$ then by  \ref{count-sup} we may choose $\mathbf{x} = x_1,\ldots,x_{d-2}$ which is $A \oplus M_i$-superficial for all $i \geq 0$. As $e_2^I(M_i) = e_2^I(M_i/ \mathbf{x}M_i)$ for all $i \geq 0$, we may assume $\dim A = 2$.

We note that for any $n \geq 1$ we have $e_2^{I^n}(M) = e_2^I(M)$. For $n \gg 1$  we have $\depth G_{I^n}(A) \geq 1$. Thus we can assume $\depth G_I(A) > 0$.
Let $x$ be $A\oplus M_i$-superficial for all $i \geq 0$. Then $x^*$ is $G_I(A)$-regular. Let $\R = A[Iu]$ be the Rees algebra of $A$ \wrt \ $I$. Set $X = xu \in \R_1$ and $\ov{A} = 
A/(x)$.  Also set $\ov{M_i}  =  M_i/xM_i$. 

Let $t_1,\ldots, t_c$ be the Eisenbud operators over $\mathbb{F}$. Then $L(M) = \bigoplus_{i \geq 0} L^I_i(M)$ is a bigraded $S = \R[t_1,\ldots, t_c]$-module (see \ref{bi}).
Notice $X$ is $L^I_0(A)$-regular.  So we have an exact sequence of $\R$-modules
\[
0 \rt L^I_0(A)(-1) \xrightarrow{X} L^I_0(A) \rt L^I_0(\ov{A}) \rt 0.
\]
This induces an exact sequence of $S$-modules
\begin{equation}\label{two}
0 \rt K \rt L(M)(-1,0) \xrightarrow{X} L(M) \rt L(\ov{M})
\end{equation}
By Proposition \ref{H1} we get that
\begin{equation}\label{2.5}
K =  \bigoplus_{i \geq 0} H_1(X, L^I_0(M_i)).
\end{equation}
Let $\M$ be the unique maximal homogeneous maximal ideal of $\R$. We take local cohomology \wrt \ $\M$ (on (\ref{two})).
As $K$ is $\M$-torsion we get an exact sequence  of $S$-modules
\begin{equation}\label{three}
0 \rt K \rt H^0_\M(L(M))(-1,0) \rt H^0_\M(L(M)) \xrightarrow{\rho} H^0_\M(L(\ov{M})).
\end{equation}
By \ref{Eis-reg} and \ref{RR-artin-dim1} we get that $H^0_\M(L(\ov{M}))$ is an Artin
module over the subring 
$T = A[t_1,\cdots, t_c]$ of $S$. So $E = \image \rho$ is also an Artin $T$-module.
By \ref{RR} we get that
\[
H^0_\M(L(M))  = \bigoplus_{i \geq 0}H^0_\M(L^I_0(M_i)).
\] 
Thus for a fixed $i$ we get that $H^0_\M(L(M))_{(i,n)} = 0$ for $n \gg 0$.
It follows that $\ell(K_i) = \ell(E_i)$. As $E$ is an Artin $T$-module the function 
$i \mapsto \ell(E_i)$ is of quasi-polynomial type with period two. By \ref{RR-artin-dim1} its degree is $\leq \cx(M) - 1$. Thus the  function 
$i \mapsto \ell(K_i)$ is of quasi-polynomial type with period two and degree $ \leq \cx(M)-1$. By \ref{mod-sup-h}(3b)
\ref{H1}(1)  and (\ref{2.5})  we have
\[
e_2^I(M_i) = e_2^I(\ov{M_i}) - \ell(K_i),
\]
is of quasi-polynomial type with period two and degree $\leq \cx(M)-1$.
\end{proof}
Next we prove:
\begin{theorem}\label{asymp-depth-2}
Let $(A,\m)$ be \CM \ local ring of dimension two and let $I$ be an $\m$-primary ideal with $G_I(A)$ \CM.  Let $M$ be a MCM $A$-module. Assume $M$ has finite GCI dimension over $A$. Then the functions $j \mapsto \depth G_{I}(\Syz^A_{2j}(M))$ and $j \mapsto \depth G_I(\Syz^A_{2j+1}(M))$ are constant for $j \gg 0$
\end{theorem}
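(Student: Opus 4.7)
The plan is to use Sally descent to reduce to the one-dimensional result \ref{asymp-depth-1}, and then to resolve the remaining ambiguity using the quasi-polynomial behavior of a certain Koszul-homology length already exploited in the proof of Theorem \ref{e2}. First I reduce via Remark \ref{base-change} and Lemma \ref{count-sup} to the situation in which $A = Q/(\bF)$, $\projdim_Q M < \infty$, the residue field is uncountable, and a single $x_1 \in I$ is $A \oplus M_i$-superficial for every $i$, where $M_i = \Syz^A_i(M)$. Since $G_I(A)$ is \CM \ of dimension two, $x_1^*$ is automatically $G_I(A)$-regular.

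Next set $\ov{A} = A/(x_1)$; because $x_1$ is $M$-regular one has $\ov{M_i} := M_i/x_1 M_i = \Syz^{\ov{A}}_i(\ov{M})$ (the minimal $A$-free resolution $\mathbb{F}$ of $M$ reduces mod $x_1$ to a minimal $\ov{A}$-free resolution of $\ov{M}$), $\ov{M}$ has finite GCI dimension over $\ov{A}$ (writing $\ov{A} = Q/(\bF,y_1)$ for a lift $y_1$ of $x_1$), and $G_I(\ov{A}) = G_I(A)/(x_1^*)$ is \CM \ of dimension one. Theorem \ref{asymp-depth-1} applied to $\ov{A}$ and $\ov{M}$ then gives, for each parity $r \in \{0,1\}$, the existence of $\alpha_r := \lim_{j \to \infty} \depth G_I(\ov{M_{2j+r}}) \in \{0,1\}$.

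Now fix $r \in \{0,1\}$ and distinguish two cases. If $\alpha_r = 1$, then for $j \gg 0$ Sally descent \ref{mod-sup-h}(5) makes $x_1^*$ regular on $G_I(M_{2j+r})$, and \ref{mod-sup-h}(4) gives $G_I(\ov{M_{2j+r}}) = G_I(M_{2j+r})/x_1^* G_I(M_{2j+r})$, whence $\depth G_I(M_{2j+r}) = 1 + \alpha_r = 2$. If $\alpha_r = 0$, then $\depth G_I(M_{2j+r}) \leq 1$ for $j \gg 0$ (otherwise the same descent would force $\alpha_r \geq 1$), so the only question is whether this depth is $0$ or $1$. By Proposition \ref{H1}(2) the latter is equivalent to $H_1(X_1, L^I_0(M_{2j+r})) = 0$, where $X_1 = x_1 u$. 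Running the proof of Theorem \ref{e2} verbatim with this $x_1$ produces a bigraded module $K = \bigoplus_i H_1(X_1, L^I_0(M_i))$ whose length function $i \mapsto \ell(K_i)$ is of quasi-polynomial type with period two. Hence $j \mapsto \ell(K_{2j+r})$ agrees for $j \gg 0$ with a single polynomial in $j$, which, being nonnegative integer-valued, is either identically zero or eventually strictly positive. In either event $\depth G_I(M_{2j+r})$ is eventually constant, finishing the case $\alpha_r = 0$ as well.

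The main work is bookkeeping; no new technique is required beyond the two ingredients \ref{asymp-depth-1} and the $K$-module from the proof of \ref{e2}. The one step meriting some care is verifying that the construction of $K$ and the quasi-polynomial conclusion on $\ell(K_i)$ transfer verbatim to the present setup; this needs only that $x_1^*$ is $G_I(A)$-regular and $\depth G_I(A) > 0$, both of which are automatic from $G_I(A)$ being \CM \ of dimension two.
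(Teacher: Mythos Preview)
Your proof is correct and uses essentially the same ingredients as the paper: a common superficial element, the Koszul-homology module $K = \bigoplus_i H_1(X, L^I_0(M_i))$, Sally descent, and the one-dimensional result \ref{asymp-depth-1}. The only organizational difference is that the paper first separates ``depth $0$'' from ``depth $\geq 1$'' via the truncated module $K_{n\leq r}$ together with Lemma \ref{depth1}, and only then invokes \ref{asymp-depth-1} with Sally descent to split ``depth $1$'' from ``depth $2$''; you reverse the order, first applying \ref{asymp-depth-1} and then, in the case $\alpha_r = 0$, appealing to the quasi-polynomiality of $\ell(K_i)$ already established in the proof of Theorem \ref{e2} to separate depth $0$ from depth $1$. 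Your route sidesteps Lemma \ref{depth1} at the cost of importing the somewhat heavier local-cohomology argument from \ref{e2}, but both are valid and equivalent in spirit.
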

To prove this theorem we need the following:
\begin{lemma}\label{depth1}
Let $(A,\m)$ be a \CM \ local ring of dimension two and let $I$ be an $\m$-primary ideal. Let $M$ be an MCM $A$-module. Let $x,y$ be an $A\oplus M$-superficial sequence. Set $J = (x,y)$ and assume $I^{r+1} = JI^r$. Then the following are equivalent:
\begin{enumerate}[\rm (1)]
\item
$x^*$ is $G_I(M)$-regular.
\item
$(I^{n+1}M \colon x) = I^nM$ for all $n \geq 1$.
\item
$(I^{n+1}M \colon x) = I^nM$ for $n = 1,\cdots, r$.
\end{enumerate}
\end{lemma}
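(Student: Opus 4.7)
The plan is to dispatch the equivalence $(1)\Leftrightarrow(2)$ by a standard argument about superficial elements, and then reduce the real content to the implication $(3)\Rightarrow(2)$.

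For $(1)\Leftrightarrow(2)$: since $M$ is MCM over the \CM\ ring $A$ and $x$ is a parameter, $x$ is $M$-regular. Saying $x^*$ is $G_I(M)$-regular then amounts to: for every $n\geq 0$, $m\in I^nM$ and $xm\in I^{n+2}M$ force $m\in I^{n+1}M$. For nonzero $m$, choosing the maximal $k$ with $m\in I^kM$ and applying the $k$-th instance of this condition, $x^*$-regularity is seen to be equivalent to $(I^{n+1}M\colon x)=I^nM$ for all $n\geq 0$; the converse direction is immediate from the same colon equality.

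The implication $(2)\Rightarrow(3)$ is trivial, so the real work is $(3)\Rightarrow(2)$: one must bootstrap the colon equalities from $n\leq r$ to all $n$. I would argue by induction on $n\geq r+1$. The two crucial ingredients are: (a) $x,y$ is an $M$-regular sequence, because $M$ is MCM of depth two and $(x,y)$ is a parameter ideal; (b) the reduction equation $I^{r+1}=JI^r$ yields $I^{n+1}M=xI^nM+yI^nM$ for all $n\geq r$. Given $m\in M$ with $xm\in I^{n+1}M$ and $n\geq r+1$, write $xm=xa+yb$ with $a,b\in I^nM$. Then $x(m-a)=yb$, and $M$-regularity of the sequence $x,y$ first yields $m-a=yc$ for some $c\in M$; back-substitution gives $y(xc-b)=0$, so $xc=b\in I^nM$. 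Either the $n=r$ case of hypothesis (3) (when $n=r+1$) or the previous inductive step (when $n>r+1$) then gives $c\in I^{n-1}M$, whence $yc\in I^nM$ and $m=a+yc\in I^nM$.

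I do not expect a serious obstacle. The only delicate point is bookkeeping: the inductive step at $n\geq r+1$ calls on the colon equality at level $n-1$, which is precisely what is already available. The essential observations are that $x,y$ is $M$-regular, and that $I^{n+1}M$ decomposes as $xI^nM+yI^nM$ past the reduction number; both are direct consequences of the \CM\ hypothesis together with $\dim A=2$ and the choice of reduction.
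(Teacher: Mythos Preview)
Your proof is correct. The equivalence $(1)\Leftrightarrow(2)$ and the implication $(2)\Rightarrow(3)$ are handled just as in the paper, and your inductive element-chase for $(3)\Rightarrow(2)$ works: the key points are that $x,y$ is an $M$-regular sequence (so $x$ is regular on $M/yM$, giving $m-a=yc$), and that $I^{n+1}M=xI^nM+yI^nM$ for $n\geq r$.

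The paper packages the same idea differently. It writes down a five-term exact sequence
\[
0 \rightarrow \frac{(I^{n}M \colon J)}{I^{n-1}M} \rightarrow \frac{(I^{n}M \colon x)}{I^{n-1}M} \xrightarrow{\beta_n} \frac{(I^{n+1}M \colon x)}{I^nM} \rightarrow \frac{I^{n+1}M}{JI^nM} \rightarrow \frac{I^{n+1}N}{yI^nN}\rightarrow 0,
\]
where $\beta_n$ is multiplication by $y$ and $N=M/xM$. Setting $V=\bigoplus_n (I^{n+1}M:x)/I^nM$, hypothesis (3) makes $V_n=0$ for $n\leq r$, while for $n\geq r$ the term $I^{n+1}M/JI^nM$ vanishes; together this collapses the sequence to $0\to U\to V[-1]\to V\to 0$, and a length count forces $V=0$. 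Your element-chase is precisely the verification that $\beta_n$ is surjective once $n\geq r$, so the two arguments have the same content. Your version is more elementary and self-contained; the paper's exact sequence, on the other hand, is reused later (e.g.\ in the regularity bound of Section~8), which is why the author sets it up in this reusable form.
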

\begin{proof}
The equivalence (1) and (2) is well-known. Also clearly (2) implies (3).
Now assume (3). Set $N = M/xM$. We have the following exact sequence
\[
 0 \rt  \frac{(I^{n}M \colon J)}{I^{n-1}M}     \xrightarrow{\gamma_n}                     \frac{(I^{n}M \colon x)}{I^{n-1}M}      \xrightarrow{\beta_n}         \frac{(I^{n+1}M \colon x)}{I^nM}      \xrightarrow{\alpha_n}    \frac{I^{n+1}M}{JI^nM}     \xrightarrow{\rho_n}  \frac{I^{n+1}N}{yI^nN}\rt 0,
\]
where $\rho_n$ is the natural surjection, $\gamma_n$ is the natural inclusion, $\alpha_n( a + I^nM) = xa + JI^nM$ and $\beta_n(a + I^{n-1}M) = ya + I^nM$.
Set
\[
U = \bigoplus_{n \geq 0}\frac{(I^{n}M \colon J)}{I^{n-1}M} \quad \text{and} \quad V =  \bigoplus_{n \geq 0}\frac{(I^{n+1}M \colon x)}{I^nM}.  
\]
If $(I^{n+1}M \colon x) = I^nM$ for $n = 1,\cdots, r$ then by the above exact sequence we have an exact sequence
\[
0 \rt U \rt V[-1] \rt V \rt 0
\]
As $\ell(V)$ is finite we get that $V[-1] \cong V$ and this implies $V = 0$, so (2) holds.
\end{proof}
We now give
\begin{proof}[Proof of Theorem \ref{asymp-depth-2}]
By Remark \ref{base-change} we may assume $A = Q/(\mathbf{f})$ where $Q$ is complete with uncountable  residue field, $\mathbf{f} = f_1,\cdots, f_c$ is a $Q$-regular sequence and 
$\projdim_Q M$ is finite.  Let $\mathbb{F}$ be a minimal resolution of $M$. Set $M_i = \Syz^A_i(M)$.

Let $x,y$ be $M_i\oplus A$-superficial sequence \wrt \ $I$ for all $i$. Such an element exists as the residue field of $A$ is uncountable, see \ref{count-sup}. Then $x^*$ is $G_I(A)$-regular.
Let $\R = A[Iu]$ be the Rees algebra of $A$ \wrt \ $I$. Set $X = xu \in \R_1$ and $\ov{A} = 
A/(x)$.  Also set $\ov{M_i}  =  M_i/xM_i$. 

Let $t_1,\ldots, t_c$ be the Eisenbud operators over $\mathbb{F}$. Then $L(M) = \bigoplus_{i \geq 0} L^I_i(M)$ is a bigraded $S = \R[t_1,\ldots, t_c]$-module (see \ref{bi}).

Notice $X$ is $L^I_0(A)$-regular.  So we have an exact sequence of $\R$-modules
\[
0 \rt L^I_0(A)(-1) \xrightarrow{X} L^I_0(A) \rt L^I_0(\ov{A}) \rt 0.
\]
This induces an exact sequence of $S$-modules
\begin{equation}\label{four}
0 \rt K \rt L(M)(-1,0) \xrightarrow{X} L(M) \rt L(\ov{M}).
\end{equation}
By Proposition \ref{H1} we get that
\begin{equation}\label{five}
K =  \bigoplus_{i \geq 0} H_1(X, L^I_0(M_i)).
\end{equation}
Let $I^{r+1} = (x,y)I^r$. We note that $E = K_{n \leq r}$ is an Artin $A[t_1,\ldots, t_c]$-module. Thus the function $i \rt \ell(E_i)$ is of quasi-polynomial type with period two. Therefore by Lemma \ref{depth1} we  get  that for each $j = 0, 1$  either  $\depth G_I(M_{2i+j}) = 0 $ for $i  \gg 0$ or $\depth G_I(M_{2i+j}) \geq 1 $ for $i  \gg 0$. We now go mod $x$ and use Theorem \ref{asymp-depth-1} and \ref{mod-sup-h}(5) to conclude.
\end{proof}
\section{asymptotic depth}
In this section $(A,\m)$ is a \CM \ local ring of dimension two and $I$ is an $\m$-primary ideal with $G_I(A)$ \CM.  Let $M$ be a MCM $A$-module with finite GCI-dimension over $A$. Let $\xi_I(M) = \lim_{n \rt \infty} \depth G_{I^n}(M)$. In this section we prove that the functions $i \mapsto \xi_I(\Syz^A_{2i}(M))$ and $i \mapsto \xi_I(\Syz^A_{2i+1}(M))$ are constant for  $i \gg 0$.   
Let $\R(I)$ denote the Rees algebra of $A$ \wrt \ $I$ and let $\M$ denote its unique maximal homogeneous ideal.

\s \label{reductions}  We note that $\xi_I(M) \geq 1$. Furthermore $\xi_I(M) \geq 2$ if and only if \\  $H^1_\M(L^I_0(M))_{-1} = 0$, see \cite[9.2]{Pu2}. As $\dim A = 2$ the only possible values of $\xi_I(M)$ is $1$ or $2$. We now note that $L^I_0(M)(-1)  = \bigoplus_{n \geq 0} M/I^nM$ behaves very well \wrt \ to the Veronese functor. Clearly for $m \geq 1$ we have
\[
L^I_0(M)(-1)^{<m>}  = \left( \bigoplus_{n \geq 0} M/I^nM \right)^{<m>} = \bigoplus_{n \geq 0}M/I^{nm} = L^{I^m}_{0}(M)(-1).
\]
Also note that $\M^{<m>}$ is the unique maximal homogeneous ideal of $\R(I^m)$. It follows that
\[
H^1_\M(L^I_0(M))_{-1}  \cong  H^1_{\M^{<m>}}(L^{I^m}_0(M))_{-1} \quad \text{as $A$-modules}.
\]
We need the following:
\begin{lemma}\label{need}
Let $(A,\m)$ be a \CM \ local ring of dimension two and let $I$ be an $\m$-primary ideal. Let $J = (x,y)$ be a minimal reduction of $I$ and assume that  $I^{r+1} = JI^r$. Let
$M$ be a maximal \CM \ $A$-module. Then $H^1_\M(L^I_0(M))_n = 0$ for $n \geq r - 2$. In particular if $m \geq r$ then
\[
H^1_{\M^{<m>}}(L^{I^m}_0(M))_n = 0 \quad \text{for} \ n \geq 0.
\]
\end{lemma}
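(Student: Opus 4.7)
The plan is to combine two short exact sequences for $L^I_0(M)$ whose long exact sequences in graded local cohomology with respect to $\M$ together determine $H^1_\M(L^I_0(M))$. First I would use the ``Rees-like'' sequence
\[
0 \rt \R(M) \rt M[u] \rt L^I_0(M)(-1) \rt 0.
\]
Since $M$ is maximal \CM \ and $x, y$ is $A$-regular, $x, y$ is an $M$-regular sequence; both elements lie in $\m \subseteq \M$, and by flatness of $A \rt A[u]$ they remain regular on $M[u] = M \otimes_A A[u]$, giving $\depth_\M M[u] \geq 2$. Consequently $H^i_\M(M[u]) = 0$ for $i \leq 1$, and the long exact sequence produces an injection $H^1_\M(L^I_0(M))(-1) \hookrightarrow H^2_\M(\R(M))$. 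Because $\R(M)$ is a finitely generated $\R$-module, $H^2_\M(\R(M))$ is graded Artinian and vanishes in all sufficiently high degrees; hence $H^1_\M(L^I_0(M))_n = 0$ for $n \gg 0$.

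Next I would use the natural sequence
\[
0 \rt G_I(M) \rt L^I_0(M) \rt L^I_0(M)(-1) \rt 0,
\]
where the inclusion in degree $n$ is $I^nM/I^{n+1}M \hookrightarrow M/I^{n+1}M$ and the quotient is the projection $M/I^{n+1}M \twoheadrightarrow M/I^nM$. Taking $H^\bullet_\M$ and identifying $H^i_\M(G_I(M)) = H^i_{G_I(A)_+}(G_I(M))$, the bound $a_d(G_I(M)) \leq r - d$ recalled in the introduction (with $d = 2$) yields $H^2_\M(G_I(M))_n = 0$ for $n \geq r - 1$. The connecting homomorphism thus gives a surjection
\[
H^1_\M(L^I_0(M))_n \twoheadrightarrow H^1_\M(L^I_0(M))_{n-1} \quad \text{for every} \ n \geq r - 1.
\]
Iterating this surjection downward, starting from a degree where the left side is already zero by the first step, I obtain $H^1_\M(L^I_0(M))_n = 0$ for all $n \geq r - 2$, which is the main claim.

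The ``in particular'' assertion will then follow from the Veronese identity $L^{I^m}_0(M)(-1) = L^I_0(M)(-1)^{<m>}$ noted at the start of the section, together with the compatibility of graded local cohomology with the Veronese functor. This yields
\[
H^1_{\M^{<m>}}(L^{I^m}_0(M))_n = H^1_\M(L^I_0(M))_{m(n+1) - 1},
\]
and for $m \geq r$ and $n \geq 0$ the index $m(n+1)-1$ is at least $r-1 \geq r-2$, placing it in the vanishing range.

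The principal obstacle is the depth calculation in Step 1: verifying $\depth_\M M[u] \geq 2$ with respect to the non-standard maximal homogeneous ideal $\M = \m + \R_+$ of the Rees algebra rather than the more familiar polynomial-ring maximal ideal $\m A[u]$. Once this is in hand, the Artinian property of $H^2_\M(\R(M))$ combined with the top-degree vanishing $a_2(G_I(M)) \leq r-2$ forces the descending induction in Step 2 to close at the stated bound $n \geq r - 2$.
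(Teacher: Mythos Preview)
Your argument is correct and matches the paper's almost exactly: both invoke $a_2(G_I(M)) \leq r-2$, feed the sequence $0 \rt G_I(M) \rt L^I_0(M) \rt L^I_0(M)(-1) \rt 0$ into $H^\bullet_\M$, and descend from high-degree vanishing of $H^1_\M(L^I_0(M))$, with the Veronese identity handling the final assertion. The sole difference is that the paper cites \cite[6.4]{Pu2} for that high-degree vanishing whereas you supply it via the Rees sequence; your flagged obstacle is not a real one, since $x,y \in \m \subseteq \M$ are $M[u]$-regular and every element of $H^i_\M(-)$ is $\M$-torsion, which forces $H^0_\M(M[u]) = H^1_\M(M[u]) = 0$.
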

\begin{proof}
Let $a_2(G_I(M))  = \max \{ n \mid H^2_{\M}(G_I(M))_n \neq 0 \}$. Then by 
\cite[3.2]{T} we get that $a_2(G_I(M))  \leq r - 2$. The exact sequence
$0 \rt G_I(M) \rt L^I_0(M) \rt L^I_0(M)(-1) \rt 0$ induces an exact sequence
\[
H^1_\M(L^I_0(M)) \rt H^1_\M(L^I_0(M))(-1) \rt H^2_\M(G_I(M)). 
\]
By \cite[6.4]{Pu2} we get that $H^1_\M(L^I_0(M))_n = 0$ for $n \gg 0$. From the above exact sequence it follows that
$H^1_\M(L^I_0(M))_n = 0 $ for all $n \geq r - 2$. The rest of the assertion follows from \ref{reductions}.
\end{proof}
We now state and prove the main result of this section.
\begin{theorem}\label{asymp}
Let $(A,\m)$ be a \CM \ local ring of dimension two and let $I$ be an $\m$-primary ideal with $G_I(A)$ \CM. Let $M$ be a MCM $A$-module and assume $M$ has finite GCI dimension over $A$.  Then
the functions $i \mapsto \xi_I(\Syz^A_{2i}(M))$ and $i \mapsto \xi_I(\Syz^A_{2i+1}(M))$ are constant for  $i \gg 0$.   
\end{theorem}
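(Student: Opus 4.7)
The plan is to establish that the length function $i \mapsto \ell(H^1_\M(L^I_0(M_i))_{-1})$, where $M_i = \Syz^A_i(M)$, is of quasi-polynomial type with period $2$ in $i$ for $i \gg 0$. Since by \ref{reductions} we have $\xi_I(M_i) \in \{1,2\}$ with $\xi_I(M_i) = 2$ if and only if this length is zero, any quasi-polynomial function with values in $\mathbb{Z}_{\geq 0}$ is, on each residue class modulo $2$, either eventually zero or eventually strictly positive, which yields the desired eventual constancy of $\xi_I(\Syz^A_{2i+k}(M))$ for $k = 0, 1$.

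First I would apply Remark \ref{base-change} to assume $A = Q/(\mathbf{f})$ with $Q$ complete, $\mathbf{f} = f_1,\ldots,f_c$ a $Q$-regular sequence, $\projdim_Q M < \infty$, and the residue field of $A$ uncountable. Using Lemma \ref{count-sup}, pick a minimal reduction $J = (x,y)$ of $I$ such that $x, y$ is also $A \oplus M_i$-superficial for every $i \geq 0$, and let $I^{r+1} = JI^r$. The Veronese invariance used in \ref{reductions} combined with Lemma \ref{need} lets us replace $I$ by $I^m$ for any $m \geq r$: indeed $\xi_I(M_i) = \xi_{I^m}(M_i)$, $G_{I^m}(A)$ remains Cohen-Macaulay, and the replacement leaves $H^1_\M(L^I_0(M_i))_{-1}$ unchanged while securing $H^1_\M(L^I_0(N))_n = 0$ for all $n \geq 0$ and every maximal Cohen-Macaulay $A$-module $N$. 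Tensoring a minimal $A$-free resolution of $M$ with $L^I_0(A)$ yields the exact sequence $0 \to L^I_i(M) \to L^I_0(M_i) \to L^I_0(F_{i-1}) \to L^I_0(M_{i-1}) \to 0$; combined with $H^0_\M(L^I_0(A)) = 0$ and $H^1_\M(L^I_0(A))_n = 0$ for $n \geq -1$ (both consequences of $G_I(A)$ Cohen-Macaulay, together with the Veronese reduction), this produces the identification $H^1_\M(L^I_0(M_i))_{-1} \cong H^1_\M(L^I_i(M))_{-1}$. Thus $\bigoplus_{i \geq 0} H^1_\M(L^I_i(M))_{-1}$, the $\R$-graded degree $(-1)$ slice of the bigraded $S$-module $H^1_\M(L(M))$ with $L(M) = \bigoplus_i L^I_i(M)$ (see \ref{bi}), carries a graded $T = A[t_1,\ldots,t_c]$-module structure.

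The main obstacle is showing that $E := \bigoplus_{i \geq 0} H^1_\M(L^I_i(M))_{-1}$ is Artinian as a $T$-module; granting this, the structure theorem for Artinian $T$-modules that underlies Proposition \ref{growth} gives the desired quasi-polynomial behavior of $i \mapsto \ell(E_i)$ and completes the proof. To prove Artinianity I would mimic the strategy from Theorem \ref{asymp-depth-2}: apply $\Tor^A_*(M,-)$ to the short exact sequence $0 \to L^I_0(A)(-1) \xrightarrow{X} L^I_0(A) \to L^I_0(\bar{A}) \to 0$ (which is exact since $G_I(A)$ Cohen-Macaulay makes $X = xu$ regular on $L^I_0(A)$, where $\bar{A} = A/(x)$), use the $M$-regularity of $x$ to identify $\Tor^A_*(M, L^I_0(\bar{A})) \cong L^{\bar{I}}_*(\bar{M})$ with $\bar{M} = M/xM$, break the resulting long exact sequence of bigraded $S$-modules into short exact sequences, and apply local cohomology $H^j_\M$. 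Proposition \ref{RR-artin-dim1} applied in the one-dimensional Cohen-Macaulay ring $\bar{A}$ makes $\bigoplus_{i \geq 0} H^0_{\bar{\M}}(L^{\bar{I}}_i(\bar{M}))$ Artinian over $\bar{A}[t_1,\ldots,t_c]$, hence over $T$, and a careful diagram chase (exploiting the extra vanishings secured by the Veronese reduction) should exhibit $E$ as a subquotient of this Artinian module. The most delicate point is that we are tracking a local cohomology invariant of $L(M)$ at a specific internal degree rather than a Koszul homology invariant as in Theorem \ref{asymp-depth-2}, so the identification of $E$ inside the diagram chase must carefully account for the homological-degree shifts coming from the connecting morphisms of the Tor long exact sequence.
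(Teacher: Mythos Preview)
Your plan is essentially the paper's: same reductions, same identification $H^1_\M(L^I_0(M_i))_{-1}\cong H^1_\M(L^I_i(M))_{-1}$ (the paper's ``Claim-1''), same use of the Tor long exact sequence coming from $0\to L^I_0(A)(-1)\xrightarrow{X}L^I_0(A)\to L^I_0(\bar A)\to 0$, and the same appeal to the one–dimensional Artinianity result (Proposition~\ref{RR-artin-dim1}). The difference is only in how the last step is packaged. You aim to prove that $E=\bigoplus_i H^1_\M(L^I_i(M))_{-1}$ is a $T$-Artinian module by exhibiting it as a subquotient of $\bigoplus_i H^0_{\bar\M}(L^{\bar I}_i(\bar M))$; the paper instead writes down the exact sequence
\[
0\to H^0_\M(L^I_1(M_i))_0 \to H^0_\M(L^I_1(\bar M_i))_0 \to H^1_\M(L^I_1(M_i))_{-1}\to 0
\]
(obtained from your diagram chase, evaluated at $\R$-degree $0$) and reads off $\ell(E_{i+1})$ as the difference of two length functions that are each quasi-polynomial in $i$. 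Since this same exact sequence is $T$-equivariant, it also yields your subquotient statement, so the two packagings are equivalent.

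One point of caution about your last paragraph: the ``delicate shift'' you need is not in the Tor (homological) direction but in the $\R$-grading. The passage from degree $-1$ to degree $0$ is achieved via the identification $H^1_\M(W_i)_0\cong H^1_\M(L^I_1(M_i))_{-1}$, where $W_i=\ker\big(L^I_1(M_i)\to L^I_1(\bar M_i)\big)$ sits in $L^I_1(M_i)(-1)$ with finite-length cokernel $H_1(X,L^I_0(M_{i+1}))$; it is precisely at $\R$-degree $0$ that the comparison with $\Tor^A_i(M,A/I)$ (hence Artinianity) is available. Once you make this shift explicit, your ``careful diagram chase'' goes through exactly as in the paper.
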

\begin{proof}
By Remark \ref{base-change} we may assume $A = Q/(\mathbf{f})$ where $Q$ is complete with uncountable residue field, $\mathbf{f} = f_1,\cdots, f_c$ is a $Q$-regular sequence and 
$\projdim_Q M$ is finite.  Let $\mathbb{F}$ be a minimal resolution of $M$. Set $M_i = \Syz^A_i(M)$. Let $\R(I) = A[Iu]$ denote the Rees algebra of $A$ \wrt \ $I$ and let $\M$ denote its unique maximal homogeneous ideal. By Lemma \ref{need} and \ref{reductions} we may assume that $H^1_\M(L^I_0(M_i))_n = 0 $ for $n \geq 0$ and for all $i \geq 0$.
We will show that the vanishing of $H^1_\M(L^I_0(M_i))_{-1}$ can be detected by a quasi-polynomial of period two.

\noindent\textit{Claim-1:} $H^1_\M(L^I_1(M_i))_{-1} \cong H^1_\M(L_0^I(M_{i+1}))_{-1}$ and 
$H^1_\M(L^I_1(M_i))_{j} = 0$ for $j \geq 0$. 

The exact sequence $0 \rt M_{i+1} \rt F_i \rt M_i \rt 0$ induces
 an exact sequence $0 \rt L^I_1(M_i) \rt L^I_0(M_{i+1}) \rt L^I_0(F_i) \xrightarrow{\pi_i}  L^I_0(M_i) \rt 0$. Let $C = \ker \pi_i$.

As $G_I(A)$ is \CM \ we have $H^i_\M(L^I_0(A)) = 0$ for $i = 0, 1$, see \cite[5.2]{Pu2}. Thus 
$H^0_\M(C) = 0$ and  $H^1_\M(C) \cong H^0_\M(L^I_0(M_i))$. In particular we have $H^1_\M(C)_{-1} = 0$.

 The exact sequence $0 \rt L^I_1(M_i) \rt L^I_0(M_{i+1}) \rt C \rt 0$ induces an exact sequence
 \[
 0 \rt H^1_\M(L^I_1(M_i)) \rt H^1_\M(L^I_0(M_{i+1})) \rt H^1_\M(C)
 \]
 As $H^1_\M(C)_{-1} = 0$ we get $H^1_\M(L^I_1(M_i))_{-1} \cong H^1_\M(L_0^I(M_{i+1}))_{-1}$. As \\  $H^1_\M(L^I_0(M_{i+1}))_j = 0$ for $j \geq 0$ we also get 
 $H^1_\M(L^I_1(M_i))_{j} = 0$ for $j \geq 0$. Thus the Claim is proved.
 
 Let $x$ be $M_i \oplus A$-superficial for each $i \geq 0$. Set $X = xu$,  $\ov{A} = A/(x)$ and $\ov{M_i} = M_i/xM_i$.  As $x^*$ is $G_I(A)$-regular, we have an exact sequence
 \[
 0 \rt L^I_0(A)(-1) \xrightarrow{X}  L^I_0(A) \rt  L^I_0(\ov{A}) \rt 0.
 \]
 After tensoring with $M$ this induces for $i \geq 0$ an exact sequence
 \[
 L^I_{i+1}(M)(-1)  \xrightarrow{X} L^I_{i+1}(M) \rt L^I_{i+1}(\ov{M}) \rt 
 L^I_{i}(M)(-1)  \xrightarrow{X} L^I_{i}(M).
 \]
 We note that $L^I_{i+1}(M) \cong L^I_1(M_i)$. Also by \ref{H1} we get that
 $H_1(X,  L^I_1(M_i)) \cong H_1(X, L^I_0(M_{i+1}))$.  Thus we have an exact sequence
 \begin{align*}
 0 &\rt H_1(X, L^I_0(M_{i+1})) \rt  L^I_1(M_i)(-1)  \xrightarrow{X}  L^I_1(M_i) \rt \\
 &\xrightarrow{\alpha_i} L^I_1(\ov{M_i}) \rt  H_1(X, L^I_0(M_{i})) \rt 0.
 \end{align*}

 Let $W_i = \ker \alpha_i$ and $D_i = \image \alpha_i$. As $H_1(X, L^I_0(M_{i+1}))$ 
 has finite length we get a surjection $H^0_\M(L^I_1(M_i))(-1) \rt H^0_\M(W_i)$ and a
 isomorphism $H^1_\M(W_i) \cong   H^1_\M(L^I_1(M_i))(-1)$. Thus 
\begin{equation}\label{5.5}
H^0_\M(W_i)_0 = 0 \quad \text{and} \quad H^1_\M(W_i)_{0} \cong  H^1_\M(L^I_1(M_i))_{-1}.
\end{equation}

 The exact sequence $ 0 \rt W_i \rt L^I_1(M_i) \rt D_i \rt 0$ induces an exact sequence
 \[
 0 \rt H^0_\M(W_i) \rt H^0_\M(L^I_1(M_i)) \rt H^0_\M(D_i) \rt H^1_\M(W_i) \rt
 H^1_\M(L^I_1(M_i))
 \]
 Evaluating at $n = 0$ we get an exact sequence
 \begin{equation}\label{six}
  0 \rt H^0_\M(L^I_1(M_i))_0 \rt H^0_\M(D_i)_0 \rt H^1_\M(W_i)_0 \rt 0.
 \end{equation}

 By the exact sequence $0 \rt D_i \rt L^I_1(\ov{M_i}) \rt  H_1(X, L^I_0(M_{i})) \rt 0$ we obtain an exact sequence
 \[
 0 \rt H^0_\M(D_i) \rt H^0_\M(L^I_1(\ov{M_i})) \rt H_1(X, L^I_0(M_{i}))
 \]
 As $H_1(X, L^I_0(M_{i}))_0 = 0$ we get $H^0_\M(D_i)_0  \cong H^0_\M(L^I_1(\ov{M_i}))_0$.
 By (\ref{six}) we get that
 \[
 \ell(H^0_\M(L^I_1(M_i))_0) \leq \ell(H^0_\M(L^I_1(\ov{M_i}))_0) \quad \text{with equality iff} \  H^1_\M(W_i)_0  = 0.
 \]
 The latter condition holds by (\ref{5.5}) if and only if $H^1_\M(L^I_1(M_i))_{-1} = 0$ and this by our claim holds if and only if $H^1_\M(L^I_0(M_{i+1}))_{-1} = 0$. This holds if and only if $\xi_I(M_{i+1}) \geq 2$.
 
 We now claim that the functions $i \mapsto \ell(H^0_\M(L^I_1(M_i))_0)$ and 
 $i \mapsto \ell(H^0_\M(L^I_1(\ov{M_i}))_0)$ are of quasi-polynomial type with period two. This will prove our assertion. To see that these functions are of quasi-polynomial type, let $t_1,\ldots, t_c$ be the Eisenbud operators over $\mathbb{F}$. Then $L(M) = \bigoplus_{i \geq 0}L^I_i(M)$ is a bigraded module over $S = \R[t_1,\ldots,t_c]$, see \ref{bi}. Thus $H^0_\M(L(M)) = \bigoplus_{i \geq 0}H^0_\M(L^I_i(M))$ is a bigraded module over $S$. By \ref{RR} we get that $H^0_\M(L^I_i(M))_0 \cong \wt{IM_i}/IM_i$. As $\bigoplus_{i \geq 0}L^I_i(M)_0$ is an Artin $A[t_1,\ldots, t_c]$-module we get that $\bigoplus_{n\geq 0}H^0_\M(L^I_i(M))_0  $ is an Artin $A[t_1,\ldots, t_c]$-module. It follows that the function  $i \mapsto \ell(H^0_\M(L^I_1(M_i))_0)$ is of quasi-polynomial type with period two. A similar argument yields that the function \\ $i \mapsto \ell(H^0_\M(L^I_1(\ov{M_i}))_0)$ is of quasi-polynomial type with period two.
\end{proof}

\section{Dual Hilbert Coefficients}
In this section we assume that $(A,\m)$ is a \CM \ local ring with a canonical module $\omega$. Let $I$ be an $\m$-primary ideal and let $M$ be a maximal \CM \ $A$-module. The function $D^I(M,n) = \ell( \Hom_A(M, \omega/I^{n+1}\omega)$ is called the \emph{dual Hilbert-Samuel} function of $M$ \wrt \  $I$. In \cite{PuZ} it is shown that there exist a polynomial $t^I(M,z) \in \mathbb{Q}[z]$ of degree $d$ such that $t^I(M,n) = D^I(M,n)$ for all $n \gg 0$.
We write
$$ t^I(M,X) = \sum_{i = 0}^{d}(-1)^ic_i^I(M)\binom{X+ r -i}{r-i}.$$
The integers $c_i^I(M)$ are called the $i^{th}$- dual Hilbert coefficient of $M$ with respect to $I$. The zeroth dual Hilbert coefficient $c_0^I(M)$ is equal to $e_0^I(M)$.
We prove:
\begin{theorem}\label{main-dual-2}
Let $(A,\m)$ be a \CM  \ local ring  of dimension $d$, with a canonical module $\omega$ and let $I$ be an $\m$-primary ideal. Let $M$ be a maximal \CM \ $A$-module. Assume $M$ has finite GCI dimension.  Then
for $i = 0,1, \cdots, d$  the function  $j \mapsto c_i^I(\Syz^A_j(M))$ 
is of quasi-polynomial type with period two. If $A$ is a complete intersection then the degree of each of the above functions $\leq \cx(M)-1$.
\end{theorem}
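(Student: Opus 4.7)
The plan is to adapt the strategy of Corollary~\ref{mult} and Proposition~\ref{hilbSyz}, replacing the functor $- \otimes_A A/I^{n+1}$ by $\Hom_A(-, \omega/I^{n+1}\omega)$. By Remark~\ref{base-change} I reduce to the case $A = Q/(\mathbf{f})$ with $Q$ complete Cohen-Macaulay, $\mathbf{f}$ a $Q$-regular sequence, and $\projdim_Q M < \infty$. Let $\mathbb{F}$ be a minimal $A$-free resolution of $M$, set $M_j = \Syz^A_j(M)$, $r_j = \rank F_j$, and $N_n = \omega/I^{n+1}\omega$. Applying $\Hom_A(-, N_n)$ to $0 \to M_{j+1} \to F_j \to M_j \to 0$ and using the dimension shift $\Ext^1_A(M_j, N_n) \cong \Ext^{j+1}_A(M, N_n)$ gives the key length identity
\[
D^I(M_j, n) + D^I(M_{j+1}, n) = r_j D^I(A, n) + \ell(\Ext^{j+1}_A(M, N_n)).
\]

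For each fixed $j$ and $n \gg 0$, the aim is to expand
\[
\ell(\Ext^{j+1}_A(M, N_n)) = \sum_{i=1}^{d} (-1)^i \gamma_i(j) \binom{n+d-i}{d-i},
\]
which is polynomial of degree $\leq d-1$ because the $i = 0$ coefficient must cancel by the multiplicity identity $c_0^I(M_j) + c_0^I(M_{j+1}) = r_j c_0^I(A)$ from Corollary~\ref{mult}. Comparing coefficients of $\binom{n+d-i}{d-i}$ in the key identity yields, for $i \geq 1$,
\[
c_i^I(M_j) + c_i^I(M_{j+1}) = r_j c_i^I(A) + \gamma_i(j).
\]
Since $j \mapsto r_j$ is of quasi-polynomial type with period two by Proposition~\ref{growth}(4), once I establish the same for $j \mapsto \gamma_i(j)$, Corollary~\ref{g2-c} will conclude the $i \geq 1$ cases; the case $i = 0$ is Corollary~\ref{mult}.

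The heart of the argument is thus the bigraded analysis yielding quasi-polynomiality of $\gamma_i(j)$. The module $L^I_0(\omega) = \bigoplus_{n \geq 0} N_n$ is naturally a graded $\R$-module for $\R = A[Iu]$, and by the construction in \ref{bigraded} applied to $\Ext$ in place of $\Tor$, the bigraded module
\[
E := \bigoplus_{j, n} \Ext^j_A(M, N_n) = \bigoplus_{j \geq 0} \Ext^j_A(M, L^I_0(\omega))
\]
carries a bigraded $S = \R[t_1, \ldots, t_c]$-module structure, with $\R_1$ of bidegree $(1, 0)$ and Eisenbud operators of bidegree $(0, 2)$. To exploit this, use the short exact sequence $0 \to \R(\omega) \to \omega[u] \to L^I_0(\omega)(-1) \to 0$ of graded $\R$-modules, where $\R(\omega) = \bigoplus_n I^n \omega$ is the Rees module of $\omega$. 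Since $M$ is maximal Cohen-Macaulay and $\omega$ is the canonical module, $\Ext^j_A(M, \omega[u]) = \Ext^j_A(M, \omega) \otimes_A A[u] = 0$ for $j \geq 1$, so the long exact sequence of Ext yields $\Ext^j_A(M, L^I_0(\omega)) \cong \Ext^{j+1}_A(M, \R(\omega))(+1)$ for $j \geq 1$. As $\R(\omega)$ is finitely generated as a graded $\R$-module (generated in degree zero by any $A$-generating set of $\omega$), a bigraded version of Gulliksen's theorem shows that $E_{\geq 1}$ is a finitely generated bigraded $S$-module. Consequently the Hilbert series has the form $\sum_{j \geq 1, n} \ell(\Ext^j_A(M, N_n)) z^j w^n = h(z, w)/((1 - z^2)^c (1 - w)^d)$, and Lemma~\ref{g2-growth} gives the quasi-polynomial behavior of each $\gamma_i(j)$ with period two and degree $\leq c - 1$. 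In the complete intersection case Proposition~\ref{growth}(1) bounds $c \leq \cx(M)$, yielding the stated degree bound.

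The main obstacle is verifying this bigraded Gulliksen-type finite generation of $\bigoplus_{j \geq 2} \Ext^j_A(M, \R(\omega))$ as an $S$-module: Gulliksen's original theorem gives $T$-finite generation of $\Ext^*_A(M, D)$ only for finitely generated $A$-modules $D$, whereas $\R(\omega)$ here is finitely generated over $\R$ but not over $A$. The plan is to present $\R(\omega)$ as the cokernel of a map between finite direct sums of shifted copies of $\R$ (using generators of $\omega$ over $A$), apply $\Ext^*_A(M, -)$, and piece together Gulliksen applied componentwise to each $I^n \omega$ while tracking compatibility of the Eisenbud action with the graded $\R$-structure.
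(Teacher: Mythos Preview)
Your approach is essentially the same as the paper's: both reduce via Remark~\ref{base-change}, use the syzygy short exact sequence to get the identity $c_l^I(M_j)+c_l^I(M_{j+1})=r_j\,e_l^I(\omega)+v_{l-1}^I(j+1)$, establish that the bigraded module $\bigoplus_{i,n}\Ext^i_A(M,\omega/I^{n+1}\omega)$ is a finitely generated $S=\R[t_1,\ldots,t_c]$-module via the isomorphism $\Ext^i_A(M,\omega/I^{n+1}\omega)\cong\Ext^{i+1}_A(M,I^{n+1}\omega)$, read off the Hilbert series, and apply Lemma~\ref{g2-growth} and Corollary~\ref{g2-c}. The ``main obstacle'' you identify is handled in the paper by a direct citation: \cite[1.1]{Pu3} proves that for any finitely generated graded $\R$-module $N=\bigoplus_n N_n$, the bigraded module $\bigoplus_{i,n}\Ext^i_A(M,N_n)$ is finitely generated over $S$. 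Your sketch for re-proving this is along the right lines and is in fact the strategy used there, so you may simply cite it.

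There is, however, a genuine gap in your treatment of the complete-intersection degree bound. You claim that Proposition~\ref{growth}(1) ``bounds $c\leq\cx(M)$'', but $c$ is the number of defining equations in the presentation $A=Q/(\mathbf{f})$ and is fixed once that presentation is chosen; Proposition~\ref{growth}(1) only bounds the \emph{Krull dimension} of $\Ext^*_A(M,D)$ over $T$ for a finite-length module $D$, and does not reduce the exponent in the denominator $(1-z^2)^c$ of your bigraded Hilbert series. The paper closes this gap differently: when $A$ is a complete intersection one may take $Q$ regular, and then Avramov's theorem \cite[3.9]{LLAV} furnishes a new presentation $A=R/(g_1,\ldots,g_r)$ with $r=\cx(M)$, $g_1,\ldots,g_r$ an $R$-regular sequence, and $\projdim_R M<\infty$. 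Running the entire argument with \emph{this} presentation yields exactly $\cx(M)$ Eisenbud operators, hence denominator $(1-z^2)^{\cx(M)}$, and the degree bound follows. Without this re-presentation step your argument only gives degree $\leq c-1$ for whatever $c$ arises from the initial GCI data.
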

\begin{proof}
By Remark \ref{base-change} we may
 assume $A = Q/(f_1,\ldots, f_c)$ where $Q$ is complete with uncountable residue field, $\mathbf{f} = f_1,\cdots, f_c$ is a $Q$-regular sequence and 
$\projdim_Q M$ is finite.  
Let $\mathbb{F}$ be a minimal resolution of $M$. Let $t_1,\ldots,t_c$ be the Eisenbud operators over $\mathbb{F}$.
Let $\R = A[ut]$ be the Rees algebra of $A$ \wrt \ $I$ and let $N = \bigoplus_{n \geq 0} N_n$ is a finitely generated $\R$-module. Give
$E(N) = \bigoplus_{i,n \geq 0} \Ext^i_A(M,N_n)$ a bi-graded $S = \R[t_1,\ldots,t_c]$
structure as described in \ref{bigraded}. Then by \cite[1.1]{Pu3}, $E(N)$ is a finitely generated $S$-module. In particular $E_I(\omega, M) = \bigoplus_{i,n \geq 0} \Ext^i_A(M,I^n\omega)$ is a finitely generated $S$-module. We now note that \\
$\Ext^{i}_A(M, \omega/I^{n+1}\omega) \cong \Ext^{i+1}_A(M,I^{n+1}\omega)$ for all $i \geq 1$ and for all $n \geq 0$. Thus 
$$D_I(\omega, M) = \bigoplus_{i \geq 1,n \geq 0}\Ext^{i}_A(M, \omega/I^{n+1}\omega)$$
is a submodule of $E_I(\omega, M)(0,1)$ and so is finitely generated. Set \\ $K = \ann_S D_I(\omega, M)$. Then $\q = K_{0,0}$ is $\m$-primary and so in particular $D_I(\omega, M)$ is a finitely generated $T = \R/\q \R[t_1,\ldots,t_c]$-module. Therefore the Hilbert series of  $D_I(\omega, M)$ is of the form
\[
\frac{h(z,w)}{(1-z)^d(1-w^2)^c}
\]
By \ref{g2-growth} we get that for $i, n \gg 0$
\[
\ell(\Ext^{i}_A(M, \omega/I^{n+1}\omega)) = \sum_{l = 0}^{d-1}(-1)^lv_l^I(i)\binom{n+d-1-l}{d-1-l}
\]
where the functions $i \mapsto v_l^I(i)$ for $i = 0,\ldots,d-1$ are of quasi-polynomial type with period two and degree $\leq c -1$.  We note that it is possible that some of the $v_l^I(i)$ is identically zero.\\
Now set $M_i = \Syz^A_i(M)$. The exact sequence $0 \rt M_{i+1} \rt F_i \rt M_i \rt 0$ induces an exact sequence
\begin{align*}
0 \rt &\Hom_A(M_i, \omega/I^{n+1}\omega) \rt \Hom_A(F_i, \omega/I^{n+1}\omega) \rt \Hom_A(M_{i+1}, \omega/I^{n+1}\omega)\\
\rt &\Ext^{i+1}_A(M, \omega/I^{n+1}\omega) \rt 0.
\end{align*}
Thus for $1 \leq l \leq  d$ we have that
\[
\rank(F_i)e_l^I(\omega) - c_l^I(M_i) - c_l^I(M_{i+1}) = v_{l-1}^I(i+1).
\]
Using \ref{g2-c} we get that for $1 \leq l \leq d$ the function $i \mapsto c_l^I(M_i)$ is of quasi-polynomial type with period two and degree $\leq c-1$. Also note that $c_0^I(M_i) = e_0^I(M_i)$. So for $l = 0$ the result follows from \ref{mult}.

If $A$ is a complete intersection then we may take $Q$ to be a complete regular local ring. Then by \cite[3.9]{LLAV} there exists a complete local ring $R$ with $A = R/(g_1,\ldots,g_r)$ with $r = \cx(M)$,  $g_1,\ldots,g_r$ a $R$-regular sequence and $\projdim_R M$ finite. It now follows that the degree of the functions  $i \mapsto c_l^I(M_i)$ is $\leq \cx(M) -1$.
\end{proof}

\section{regularity}
Let $H^i(-)$ denote the $i^{th}$ local cohomology functor of $G_I(A)$ \wrt \ $G_I(A)_+ = \bigoplus_{n > 0}I^n/I^{n+1}$. 
Set
\[
a_i(G_I(M)) = \max \{ j \mid H^i(G_I(M))_j \neq 0 \}.
\]
Assume that the residue field of $A$ is infinite. Let $J$ be a minimal reduction of $I$. Say $I^{r+1} = JI^r$. Let $M$ be a maximal \CM \ $A$-module. Then it is well-known that $a_d(G_I(M))  \leq r - d $. We prove
\begin{theorem} \label{main-reg-2}
Let $(A,\m)$ be a \CM \ local ring of dimension $d \geq 2 $ and let $M$ be a maximal \CM \ $A$-module.  Let $I$ be an $\m$-primary  ideal. Assume $M$ has finite GCI dimension. Then the set
\[
\left\{ \frac{a_{d-1}(G_I(\Syz^A_i(M))}{i^{\cx(M)-1}}  \right\}_{i \geq 1 }
\]
is bounded.
\end{theorem}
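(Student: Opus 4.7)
The plan is to follow the pattern of Theorems \ref{e2}, \ref{asymp-depth-2}, and \ref{asymp}: base change to a complete intersection presentation with uncountable residue field, and then exploit the bigraded module $L(M) = \bigoplus_{i \geq 0} L^I_i(M)$ over $S = \R[t_1,\ldots,t_c]$ to extract polynomial bounds on the needed local cohomology.

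First, by Remark \ref{base-change} I reduce to the case $A = Q/(\mathbf{f})$ with $Q$ complete Cohen--Macaulay, $\mathbf{f} = f_1,\ldots,f_c$ a $Q$-regular sequence, the residue field of $A$ uncountable, and $\projdim_Q M$ finite. Fix a minimal $A$-free resolution $\mathbb{F}$ of $M$ with Eisenbud operators $t_1,\ldots,t_c$, set $M_i = \Syz^A_i(M)$, and let $\R = A[Iu]$ with unique maximal homogeneous ideal $\M$. The short exact sequence
\[
0 \to G_I(M_i) \to L^I_0(M_i) \to L^I_0(M_i)(-1) \to 0,
\]
combined with its long exact sequence in $\M$-local cohomology, gives the key inequality
\[
a_{d-1}(G_I(M_i)) \leq \max\{\, a_{d-2}(L^I_0(M_i)) + 1,\ a_{d-1}(L^I_0(M_i))\,\}.
\]
It therefore suffices to bound each $a_j(L^I_0(M_i))$, for $j \in \{d-2, d-1\}$, by a polynomial in $i$ of degree $\leq \cx(M) - 1$.

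Next I would exploit the bigraded $S$-module structure on $L(M)$ from \ref{bi}, together with the short exact sequence
\[
0 \to L^I_1(M_i) \to L^I_0(M_{i+1}) \to L^I_0(F_i) \to L^I_0(M_i) \to 0
\]
and the isomorphisms $L^I_{j+1}(M_i) \cong L^I_j(M_{i+1})$ for $j \geq 1$ (both coming from the Tor long exact sequence for $0 \to M_{i+1} \to F_i \to M_i \to 0$). These allow me to express each $H^j_\M(L^I_0(M_i))$ in terms of bigraded pieces of $H^j_\M(L(M))$. Following the pattern of Proposition \ref{RR-artin-dim1}, I would show that the relevant bigraded pieces are \emph{*-Artinian} as modules over $T = A[t_1,\ldots,t_c]$; Proposition \ref{growth} then implies that their Hilbert functions in the $i$-direction are of quasi-polynomial type with period two and degree $\leq \cx(M) - 1$, which yields the required bound on $a_{d-1}(G_I(M_i))$.

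The main obstacle is to establish this Artinianness of $\bigoplus_{i \geq 0} H^j_\M(L^I_0(M_i))_n$ for $j \in \{d-2, d-1\}$ together with a uniform polynomial-in-$i$ bound on the $n$-vanishing threshold. For $d = 2$ this should go through via the techniques of Section 5, after first replacing $I$ by a high power so that $\depth G_{I^s}(A) > 0$ (as in the proof of Theorem \ref{e2}). For $d \geq 3$ the natural route is an induction via an $A \oplus M_i$-superficial sequence $\bx$ of length $d - 2$ produced by Lemma \ref{count-sup}, coupled with the standard inequality relating $a_j(G_I(N))$ to $a_{j-1}(G_I(N/xN))$; one must then show that the error terms coming from the Sally defects $\bigoplus_n (I^{n+1}M_i \colon x)/I^n M_i$ assemble into a bigraded $S$-module whose Hilbert function in $i$ grows only polynomially of degree $\leq \cx(M) - 1$.
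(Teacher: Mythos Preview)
Your framework (base change, the sequence $0 \to G_I(M_i) \to L^I_0(M_i) \to L^I_0(M_i)(-1) \to 0$, induction on $d$) matches the paper's, but the heart of the $d=2$ case is missing. You correctly identify the obstacle as bounding the top $n$ with $H^j_\M(L^I_0(M_i))_n \neq 0$, yet your proposed fix --- showing Artinianness of the bigraded pieces over $T$ and invoking Proposition~\ref{growth} --- does not yield such a bound: quasi-polynomial growth of $i \mapsto \ell(H^0_\M(L^I_0(M_i))_n)$ for each fixed $n$ says nothing about how large $n$ can be before the component vanishes.

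The paper handles this differently. First it observes that $H^1_\M(L^I_0(M_i))_n$ vanishes \emph{uniformly} for $n \geq r-2$ (using $a_2(G_I(M_i)) \leq r-2$), so only $H^0_\M(L^I_0(M_i)) = \bigoplus_n \wt{I^{n+1}M_i}/I^{n+1}M_i$ matters, i.e.\ one needs the Ratliff--Rush stability index $\rho^I(M_i)$. The key step you are missing is the inequality
\[
\rho^I(M_i) \leq r + 1 + \ell(B^I(M_i)), \qquad B^I(M_i) = H_1(xu, L^I_0(M_i)),
\]
proved via a strict-decrease argument: for $n \geq r$ one has $\ell(\wt{I^nM_i}/I^nM_i) > \ell(\wt{I^{n+1}M_i}/I^{n+1}M_i)$ whenever the left side is nonzero (using the Sally exact sequence from Lemma~\ref{depth1}), and the starting value at $n=r$ is bounded by $\ell(B^I(M_i))$. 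This converts the ``top nonvanishing degree'' question into a length question, and $\ell(B^I(M_i))$ is quasi-polynomial of degree $\leq \cx(M)-1$ by the proof of Theorem~\ref{e2} (after passing to $I^s$ with $\depth G_{I^s}(A) > 0$ if necessary). For $d \geq 3$ the paper inducts one superficial element at a time, using that the finite-length defects $U_i, V_i$ are invisible to $H^{d-1}$ and $H^{d-2}$ when $d \geq 3$, giving $a_{d-1}(G_I(M_i)) \leq a_{d-2}(G_I(\ov{M_i})) - 1$ directly; this is cleaner than your proposed reduction by $d-2$ elements at once.
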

\begin{proof}
By Remark \ref{base-change} we may
 assume $A = Q/(f_1,\ldots, f_c)$ where $Q$ is complete with uncountable residue field, $\mathbf{f} = f_1,\cdots, f_c$ is a $Q$-regular sequence and 
$\projdim_Q M$ is finite.  We prove the result by induction on $d = \dim A$. 

We first consider the case when $d = 2$. Set $M_i = \Syz^A_i(M)$ for $i \geq 0$. As $k$ is uncountable we can choose
$x,y$  an $M_i \oplus A$-superficial sequence for all $i \geq 0$. Set $J = (x,y)$. Then $I^{r+1} = JI^r$ for some $r \geq 0$. Let $\R = A[ut]$ be the Rees algebra of $A$ \wrt \ $I$ and let $\M$ be its unique maximal homogeneous ideal.  We note that for all $i \geq 0$ we have an isomorphism of $\R$-modules
$H^i_\M(G_I(E)) \cong H^i_{G_I(A)_+}(G_I(E))$ for any finitely generated $A$-module $E$. The exact sequence of $\R$-modules $0 \rt G_I(M_i) \rt L^I_0(M_i) \rt L^I_0(M_i)(-1) \rt 0$ induces an exact sequence for all $n \in \Z$
\begin{align*}
H^0_\M(L^I_0(M_i))_{n-1} &\rt H^1(G_I(M_i))_n \rt H^1_\M(L^I_0(M_i))_n \rt H^1_\M(L^I_0(M_i))_{n-1} \rt
\\
&\rt  H^2(G_I(M_i))_n.
\end{align*}
As $H^2(G_I(M_i))_n = 0$ for $n \geq r-1$ we get that $H^I_\M(L^I_0(M_i))_n = 0$ for 
$n \geq r - 2$. 
Set $\rho^I(M_i) = \min\{ j \mid \wt{I^nM_i} = I^nM_i \ \text{for all} \ n \geq j \}$.
As $H^0_\M(L^I_0(M_i)) = \bigoplus_{n\geq 0}\wt{I^{n+1}M}/I^{n+1}M$, to prove our result it suffices to show that
\[
\left\{\frac{\rho^I(M_i)}{i^{\cx(M)-1}}  \right\}_{i \geq 1 }  \quad \text{is bounded.}
\]
We have nothing to show if $\cx(M) \leq 1$ for then  either $M$ is free (if $\cx(M) = 0$) or$M$ has a periodic resolution with period two (if $\cx(M) = 1$). So assume $\cx(M) \geq 2$.

 Set 
\[
B^I(M_i) = \bigoplus_{n \geq 0} \frac{I^{n+1}M_i \colon x}{I^nM_i} = H_1(xu, L^I_0(M_i)).
\]
Set $\ov{M_i} = M_i/xM_i$.
By  \cite[2.9]{Pu2} we have an exact sequence
\[
0 \rt \frac{I^{n+1}M_i \colon x}{I^nM_i} \rt \frac{\wt{I^nM_i}}{I^nM_i} \rt
\frac{\wt{I^{n+1}M_i}}{I^{n+1}M_i} \rt \frac{\wt{I^{n+1}\ov{M_i}}}{I^{n+1}\ov{M_i}}. 
\]
By Lemma \ref{crucial} we get that $\wt{I^n\ov{M_i}} = I^n\ov{M_i}$ for all $n \geq r$. By the above exact sequence we get that 
\[
\ell\left( \frac{\wt{I^rM_i}}{I^rM_i} \right) = \sum_{n \geq r} \ell \left(  \frac{I^{n+1}M_i \colon x}{I^nM_i} \right) \leq \ell(B^I(M_i)),  \text{and} 
\]
\[
\ell\left(\frac{\wt{I^nM_i}}{I^nM_i} \right) \geq  \ell\left( \frac{\wt{I^{n+1}M_i}}{I^{n+1}M_i} \right)  \quad \text{for} \  n \geq r.
\]
Claim: If $n \geq r$ and $\wt{I^nM_i} \neq I^nM_i$ then we have a strict inequality
\[
\ell\left(\frac{\wt{I^nM_i}}{I^nM_i} \right) >  \ell\left( \frac{\wt{I^{n+1}M_i}}{I^{n+1}M_i} \right)  
\]
Proof of Claim: If the result does not hold then we have $(I^{n+1}M_i \colon x) = I^nM_i$. For all $m \geq 1$ we have an exact sequence
\[
  \frac{(I^{m}M_i \colon x)}{I^{m-1}M_i}      \xrightarrow{\beta_m}         \frac{(I^{m+1}M_i \colon x)}{I^mM_i}      \xrightarrow{\alpha_m}    \frac{I^{m+1}M_i}{JI^mM_i}     \xrightarrow{\rho_m}  \frac{I^{m+1}\ov{M_i}}{yI^m\ov{M_i}}\rt 0,
\]
(see proof of Lemma \ref{depth1}).
So if $n \geq r$ and $(I^{n+1}M_i \colon x) = I^nM_i$ then $(I^{m+1}M_i \colon x) = I^mM_i$ for all $m \geq n$. So we get
\[
\ell\left( \frac{\wt{I^nM_i}}{I^nM_i} \right) = \sum_{m \geq n} \ell \left(  \frac{I^{m+1}M_i \colon x}{I^mM_i} \right) = 0, \quad \text{a contradiction}.
\]
It follows that
\[
\rho^I(M_i) \leq r + 1 + \ell(B^I(M_i)).
\]
We consider the following two cases.

Case I: $\depth G_I(A) > 0$. So $x^*$ is $G_I(A)$-regular. By proof of Theorem \ref{e2} the function $i \mapsto \ell(B^I(M_i))$ is of quasi-polynomial type with period two and degree $\leq \cx(M)-1$.  

It follows that 
\[
\left\{\frac{\rho^I(M_i)}{i^{\cx(M)-1}}  \right\}_{i \geq 1 }  \quad \text{is bounded.}
\]

Case II. $\depth G_I(A) = 0$.
We note that $\depth G_{I^m}(A) \geq 1$ for all $m \gg 0$. Choose $s$ such that
$\depth G_{I^s}(A) \geq 1$. For $n \geq r$ we have
\[
\ell\left(\frac{\wt{I^nM_i}}{I^nM_i} \right) \geq  \ell\left( \frac{\wt{I^{n+1}M_i}}{I^{n+1}M_i} \right).
\] 
It follows that 
\[
\rho^I(M_i) \leq \max \{ r, s\rho^{I^s}(M_i) \}.
\]
By our Case I the result follows.

Now assume $d \geq 3$ and that the result holds when dimension of the ring is $= d -1$. 
Let $x$ be $M_i \oplus A$-superficial for all $i \geq 0$. Set $\ov{M_i} = M_i/xM_i$.
Then notice $\ov{M}$ has finite GCI-dimension over $\ov{A}$ and $\ov{M_i} \cong \Syz^{\ov{A}}_i(\ov{M})$ for $i \geq 0$. We have exact sequences
\begin{align*}
0 \rt U_i &\rt G_I(M_i)(-1)\xrightarrow{x^*} G_I(M_i) \rt G_I(M_i)/x^*G_I(M_i) \rt 0   \ \ \text{and} \\
0\rt V_i &\rt G_I(M_i)/x^*G_I(M_i) \rt G_I(\ov{M_i}) \rt 0;
\end{align*}
where $U_i, V_i$ are $G_I(A)$-modules of finite length. As $d \geq 3$ we have an exact sequence for all $n \in \Z$
\[
H^{d-2}(G_I(\ov{M_i}))_n \rt H^{d-1}(G_I(M_i))_{n-1} \rt H^{d-1}(G_I(M_i))_{n}
\]
It follows that
\[
a_{d-1}(G_I(M_i)) \leq a_{d-2}(G_I(\ov{M_i})) - 1.
\]
The result now holds by induction hypothesis.
\end{proof}

\end{document}